\title{Multiparametric geometry of numbers \\ and its application to \\ splitting transference theorems.
       \thanks{This research was supported in part by RSF grant 18-41-05001}}
\author{Oleg\,N.\,German}
\date{}
\theoremstyle{definition}
\newtheorem{definition}{Definition}
\newtheorem*{notation*}{Notation}
\theoremstyle{remark}
\newtheorem{remark}{Remark}
\newtheorem*{remark*}{Remark}
\theoremstyle{plain}
\newtheorem{theorem}{Theorem}
\newtheorem{lemma}{Lemma}
\newtheorem{proposition}{Proposition}
\newtheorem{corollary}{Corollary}
\newtheorem*{statement*}{Statement}
\newtheorem*{corollary*}{Corollary}
\DeclareMathOperator{\vol}{vol}
\renewcommand{\phi}{\varphi}
\renewcommand{\vec}[1]{\mathbf{#1}}
\renewcommand{\geq}{\geqslant}
\renewcommand{\leq}{\leqslant}
\newcommand{\e}{\varepsilon}
\newcommand{\R}{\mathbb{R}}
\newcommand{\Z}{\mathbb{Z}}
\newcommand{\La}{\Lambda}
\newcommand{\bpsi}{\underline{\psi}}
\newcommand{\apsi}{\overline{\psi}}
\newcommand{\bPsi}{\underline{\Psi}}
\newcommand{\aPsi}{\overline{\Psi}}
\newcommand{\cB}{\mathcal{B}}
\newcommand{\cH}{\mathcal{H}}
\newcommand{\cP}{\mathcal{P}}
\newcommand{\cQ}{\mathcal{Q}}
\newcommand{\cT}{\mathcal{T}}
\newcommand{\tr}[1]{{#1}^\top}
\begin{document}

\maketitle

\begin{abstract}
  In this paper we consider a multiparametric version of Wolfgang Schmidt and Leonard Summerer's parametric geometry of numbers. We apply this approach in two settings: the first one concerns weighted Diophantine approximation, the second one concerns Diophantine exponents of lattices. In both settings we use multiparametric approach to define intermediate exponents. Then we split the weighted version of Dyson's transference theorem and an analogue of Khintchine's transference theorem for Diophantine exponents of lattices into chains of inequalities between the intermediate exponents we define basing on the intuition provided by the parametric approach.
\end{abstract}


\section{Introduction}

Parametric geometry of numbers was introduced several years ago by W.\,M.\,Schmidt and L.\,Summerer in \cite{schmidt_summerer_2009}, \cite{schmidt_summerer_2013}. It allowed to look at Diophantine problems from a different angle and gave a strong impulse to the development of Diophantine approximation. In this paper we consider a slightly more general setting, which we prefer to call multiparametric.

\subsection{Multiparametric geometry of numbers}

Let $\La$ be a full rank lattice in $\R^d$ of covolume $1$. Let $|\cdot|$ denote the supremum norm. Set
\[\cB=\Big\{ \vec z\in\R^d\, \Big|\,|\vec z|\leq1 \Big\},
  \qquad
  \cT=\Big\{ \pmb\tau=(\tau_1,\ldots,\tau_d)\in\R^d\, \Big|\,\tau_1+\ldots+\tau_d=0 \Big\},\]
and for each $\pmb\tau\in\cT$ set
\[\cB_{\pmb\tau}=
  \textup{diag}(e^{\tau_1},\ldots,e^{\tau_d})
  \cB.\]
Let $\lambda_k(\cB_{\pmb\tau})=\lambda_k(\cB_{\pmb\tau},\La)$, $k=1,\ldots,d$, denote the $k$-th successive minimum, i.e.
the infimum of positive $\lambda$ such that $\lambda\cB_{\pmb\tau}$ contains at least $k$ linearly independent vectors of $\La$.
Finally, for each $k=1,\ldots,d$, let us set
\[L_k(\pmb\tau)=L_k(\La,\pmb\tau)=\log\big(\lambda_k(\cB_{\pmb\tau},\La)\big),\qquad
  S_k(\pmb\tau)=S_k(\La,\pmb\tau)=\sum_{1\leq j\leq k}L_j(\La,\pmb\tau).\]

Many problems in Diophantine approximation can be interpreted as questions concerning the asymptotic behaviour of $L_k(\pmb\tau)$ and $S_k(\pmb\tau)$. Different problems require different subsets of $\cT$ along which $\pmb\tau$ is supposed to tend to infinity. In this paper we show that Diophantine approximation with weights requires considering one-dimensional and two-dimensional subspaces of $\cT$, whereas the whole $\cT$ equipped with appropriate exhaustion leads us to Diophantine exponents of lattices. In both those settings there exist transference theorems. A particular aim of this paper is to apply parametric geometry of numbers to split those transference theorems into chains of inequalities between intermediate exponents. The language of parametric geometry of numbers appears to be very well fit for this purpose. The main tools are provided by Proposition \ref{prop:essence_of_transference_split_up} (see Section \ref{sec:properties_of_L_and_S}), which we believe to be of interest in itself.

In the case considered by W.\,M.\,Schmidt and L.\,Summerer there is a very strong result by D.\,Roy \cite{roy_annals_2015}, which allows considering instead of $L_k(\pmb\tau)$, $S_k(\pmb\tau)$ certain functions obeying rather simple formal laws. It is a challenging problem to obtain an analogue of Roy's theorem for the multiparametric setting. However, for the purposes of the current paper, statements like Proposition \ref{prop:essence_of_transference_split_up} describing local behaviour of $L_k(\pmb\tau)$ and $S_k(\pmb\tau)$ are already enough.

\subsection{Diophantine approximation with weights}\label{sec:weighted_DA}

Given a matrix
\[\Theta=
  \begin{pmatrix}
    \theta_{11} & \cdots & \theta_{1m} \\
    \vdots & \ddots & \vdots \\
    \theta_{n1} & \cdots & \theta_{nm}
  \end{pmatrix}
  \in\R^{n\times m},\ \
  n+m=d,\]
and a real $\gamma$, it is questioned in the most classical `non-weighted' setting whether the system of inequalities
\begin{equation}\label{eq:trivially_weighted_system}
  \begin{cases}
    |\vec x|^m\leq t \\
    |\Theta\vec x-\vec y|^n\leq t^{-\gamma}
  \end{cases}
\end{equation}
admits nonzero solutions in $(\vec x,\vec y)\in\Z^m\oplus\Z^n$ for large values of $t$. Here, as before, $|\cdot|$ denotes the supremum norm.

In multiplicative Diophantine approximation the supremum norm is replaced with the geometric mean.
For instance, the famous Littlewood conjecture, one of the most challenging problems in multiplicative Diophantine approximation, asserts that for every $\theta_1,\theta_2\in\R$ and every $\e>0$ there are arbitrarily large $t$ such that the system of inequalities
\begin{equation*}
  \begin{cases}
    |x|\leq t \\
    |\theta_1x-y_1|\cdot|\theta_2x-y_2|\leq\e t^{-1}
  \end{cases}
\end{equation*}
admits nonzero solutions in $(x,y_1,y_2)\in\Z^3$.

Diophantine approximation with weights is in a sense an intermediate step between those two settings. Given weights $\pmb\sigma=(\sigma_1,\ldots,\sigma_m)\in\R^m$, $\pmb\rho=(\rho_1,\ldots,\rho_n)\in\R^n$,
\[\sigma_1\geq\ldots\geq\sigma_m>0,\qquad
  \rho_1\geq\ldots\geq\rho_n>0,\qquad
  \sum_{j=1}^m\sigma_j=\sum_{i=1}^n\rho_i=1,\]
the supremum norm is replaced with the weighted norms $|\cdot|_{\pmb\sigma}$ and $|\cdot|_{\pmb\rho}$\,,
\[|\vec x|_{\pmb\sigma}=\max_{1\leq j\leq m}|x_j|^{1/\sigma_j}\qquad\text{ for }\vec x=(x_1,\ldots,x_m),\]
\[|\vec y|_{\pmb\rho}=\max_{1\leq i\leq n}|y_i|^{1/\rho_i}\qquad\,\ \ \text{ for }\vec y=(y_1,\ldots,y_n).\ \]
Respectively, instead of \eqref{eq:trivially_weighted_system}, the system
\begin{equation}\label{eq:system_with_weights}
  \begin{cases}
    |\vec x|_{\pmb\sigma}\leq t \\
    |\Theta\vec x-\vec y|_{\pmb\rho}\leq t^{-\gamma}
  \end{cases}
\end{equation}
is considered. Clearly, when all the $\sigma_j$ are equal to $1/m$ and all the $\rho_i$ are equal to $1/n$, \eqref{eq:system_with_weights} turns into \eqref{eq:trivially_weighted_system}.

\begin{definition}\label{def:ordinary_weighted}
  The \emph{weighted Diophantine exponent} $\omega_{\pmb\sigma,\pmb\rho}(\Theta)$ is defined as the supremum of real $\gamma$ such that the system \eqref{eq:system_with_weights} admits nonzero solutions in $(\vec x,\vec y)\in\Z^{m+n}$ for some arbitrarily large $t$.
\end{definition}

It is well known that, as a rule, there is a relation between problems concerning $\Theta$ and problems concerning the transposed matrix $\tr\Theta$. This relation is provided by the so called transference principle discovered by A.\,Ya.\,Khintchine in \cite{khintchine_palermo}. Recently, in paper \cite{german_M_2020}, the following transference result was obtained. It generalises Dyson's transference theorem \cite{dyson} to the weighted setting.

\begin{theorem}\label{t:weighted_Dyson}
  Set $\omega=\omega_{\pmb\sigma,\pmb\rho}(\Theta)$ and $\tr\omega=\omega_{\pmb\rho,\pmb\sigma}(\tr\Theta)$. Then
  \begin{equation}\label{eq:weighted_Dyson}
    \omega\geq
    \frac{\big(\sigma_m^{-1}-1\big)+\rho_n^{-1}\tr\omega}
         {\sigma_m^{-1}+\big(\rho_n^{-1}-1\big)\tr\omega}\,.
  \end{equation}
\end{theorem}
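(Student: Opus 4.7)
The plan is to interpret both exponents $\omega$ and $\tr\omega$ as describing the asymptotic behaviour of the extreme successive minima of a single lattice $\La=\La(\Theta)=\{(\vec x,\Theta\vec x-\vec y):(\vec x,\vec y)\in\Z^{m+n}\}\subset\R^d$ along two rays contained in a common 2-dimensional subspace of $\cT$, and then apply Proposition~\ref{prop:essence_of_transference_split_up} to pass between them. Setting
\[
  \vec s=(\sigma_1,\ldots,\sigma_m,0,\ldots,0),\qquad
  \vec r=(0,\ldots,0,\rho_1,\ldots,\rho_n),
\]
and, after the standard mean-zero shift, writing $\pmb\tau(\alpha,\beta)=\alpha\vec s-\beta\vec r\in\cT$, a direct unwinding of the definition of the weighted norms gives
\[
  \omega\ge\gamma \iff L_1\big(\pmb\tau(T,\gamma T)\big)\le O(1)\ \text{ for arbitrarily large }T.
\]
Combining the identification of the polar lattice $\La(\Theta)^{\ast}$ with $\La(\tr\Theta)$ (by swapping the two coordinate blocks and a sign change) with Mahler's duality $L_1(\La^{\ast},\pmb\mu)+L_d(\La,-\pmb\mu)=O(1)$, one then obtains the dual description
\[
  \tr\omega\ge\tilde\gamma \iff L_d\big(\pmb\tau(\tilde\gamma T',T')\big)\ge -O(1)\ \text{ for arbitrarily large }T'.
\]
Thus $\omega$ and $\tr\omega$ govern $L_1$ and $L_d$ along two rays lying in the common 2-dimensional subspace $V\subset\cT$ spanned by the mean-zero projections of $\vec s$ and $\vec r$.

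Fix $\tilde\gamma<\tr\omega$ and a large $T'$ at which $L_d(\pmb\tau(\tilde\gamma T',T'))\ge -C$. The goal is to deduce, for $\gamma:=\frac{(\sigma_m^{-1}-1)+\rho_n^{-1}\tilde\gamma}{\sigma_m^{-1}+(\rho_n^{-1}-1)\tilde\gamma}$ and some large $T$, an upper bound of the form $L_1(\pmb\tau(T,\gamma T))\le O(1)$. Since $L_1\le L_d$ and $S_d$ is bounded, the lower bound on $L_d$ at the pivot gives no control on $L_1$ there; but as $\pmb\tau$ moves inside $V$ from the pivot along a carefully chosen direction, the successive minima evolve, and Proposition~\ref{prop:essence_of_transference_split_up} describes the admissible rates of change for each $L_k$ and $S_k$ in a way that is sensitive, coordinate by coordinate, to the displacement vector. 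Choosing that displacement so as to arrive at a point of the form $\pmb\tau(T,\gamma T)$ on the direct ray, and optimising the trade-off between how far $L_d$ is allowed to fall and how far $L_1$ is allowed to rise, produces a system of linear (in)equalities whose sharp solution is exactly the claimed formula expressing $\gamma$ in terms of $\tilde\gamma$. Letting $\tilde\gamma\to\tr\omega$ then yields \eqref{eq:weighted_Dyson}.

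The main obstacle is the optimal choice of the displacement direction inside $V$, and the verification that no other trajectory does better. The appearance in the formula of $\sigma_m^{-1}$ and $\rho_n^{-1}$, rather than any other $\sigma_j^{-1}$ or $\rho_i^{-1}$, is dictated by the ordering hypotheses $\sigma_1\ge\ldots\ge\sigma_m$ and $\rho_1\ge\ldots\ge\rho_n$: the last coordinate of each block carries the smallest weight and therefore controls the tightest component of the displacement vector, i.e.\ the component that binds first when Proposition~\ref{prop:essence_of_transference_split_up} is applied. This matches the paper's broader programme of splitting transference theorems into chains of inequalities between intermediate exponents, with \eqref{eq:weighted_Dyson} corresponding to the full chain collapsed into a single step.
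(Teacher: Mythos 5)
Your setup is sound and close in spirit to the paper's: you work with the single lattice $\La(\Theta)$, you use Mahler duality to convert $\tr\omega$ into a statement about $L_d(\La,\cdot)$ along a second ray in the two-dimensional subspace spanned by the (shifted) weight vectors, and you aim to transfer information from that ray to the ray governing $L_1$. But the decisive step is missing, and the tool you invoke for it cannot supply it. Proposition \ref{prop:essence_of_transference_split_up} is a \emph{static} statement: at a fixed $\pmb\tau$ it records the duality $L_k(\La,\pmb\tau)=-L_{d+1-k}(\La^\ast,-\pmb\tau)+O(1)$ and the Minkowski-type chains among the $S_k(\La,\pmb\tau)$. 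It says nothing about ``admissible rates of change of each $L_k$ and $S_k$'' as $\pmb\tau$ is displaced, and it has no coordinate-by-coordinate sensitivity to a displacement vector; so the optimisation you describe over trajectories inside $V$, which is supposed to produce the exact formula \eqref{eq:weighted_Dyson}, is not backed by any stated result, and you never exhibit the direction, the linear inequalities, or the verification that their solution is the claimed $\gamma$. The heuristic that $\sigma_m,\rho_n$ appear because ``the smallest weight binds first'' is not a substitute: in a naive Lipschitz-type displacement argument (bounding how much $L_1$ can grow when $\pmb\tau$ moves from the pivot to the target) the binding coordinates come out to be $\sigma_1,\rho_1$ and the resulting constant is weaker than \eqref{eq:weighted_Dyson}, so the sharp formula genuinely requires a different mechanism, not merely careful bookkeeping of a single displacement.

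For comparison, the paper's proof replaces your trajectory step by two concrete ingredients. First, the transfer between the primal and dual data happens at \emph{antipodal} parameter pairs $(\pmb\tau,-\pmb\tau)$ via Corollary \ref{cor:essence_of_transference} (equivalently \ref{cor:essence_of_transference_weighted}): $S_1(\La,\pmb\tau)\leq S_1(\La^\ast,-\pmb\tau)/(d-1)+O(1)$, which packages Minkowski's second theorem together with Mahler duality; this is where the factor $d-1$ lives. Second, Lemma \ref{l:immersion} supplies the quantitative bridge between the dual ray $\pmb\mu^\ast(\delta)$ and the negated primal ray $-\pmb\mu(\gamma_\delta)$: an explicit inclusion
\[
  e^{s_\delta(1-\delta)}\cQ^\ast(s_\delta,\delta)\subset e^{s(d-1)(1-\gamma_\delta)}\cQ(-s,\gamma_\delta),
\]
with the reparametrisation $s\mapsto s_\delta$ chosen so that the coordinatewise inequalities \eqref{eq:s_delta} hold; it is precisely here, and only here, that the monotonicity of the weights forces $\sigma_m^{-1}$ and $\rho_n^{-1}$ (for $\delta\geq1$) and that the expression $\gamma_\delta$ of \eqref{eq:gamma_of_delta} arises. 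Note also that $\pmb\mu^\ast(\delta)$ and $-\pmb\mu(\gamma_\delta)$ are in general not proportional (see the transference diagram), so the two rays are compared at different ``times''; this rescaling is exactly what your sketch leaves unspecified. Finally, a smaller point: after the mean-zero shift your criterion is not $L_1(\pmb\tau(T,\gamma T))\leq O(1)$ but $L_1\leq$ a term linear in $T$ coming from the normalising dilation (cf. $\cP(ds,\gamma)=e^{s(1-\gamma)}\cQ(s,\gamma)$ in Proposition \ref{prop:weighted_in_terms_of_schmimmerer}); tracking these linear terms correctly is part of what makes the constant come out right. To complete your argument you would need to state and prove an analogue of Lemma \ref{l:immersion} (or the containment above) and combine it with Corollary \ref{cor:essence_of_transference_weighted}; as written, the proof stops short of the theorem's quantitative content.
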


In Section \ref{sec:weighted_exp} we interpret \eqref{eq:weighted_Dyson} in terms of parametric geometry of numbers and split it into a chain of inequalities for intermediate exponents we define therein.

\subsection{Diophantine exponents of lattices}\label{sec:lattice_exp_intro}

For each $\vec z=(z_1,\ldots,z_d)\in\R^d$ let us set
\[\Pi(\vec z)=\prod_{1\leq i\leq d}|z_i|^{1/d}.\]
Let $\La$ be a full rank lattice in $\R^d$ of covolume $1$. Its \emph{Diophantine exponent} is defined as
\[\omega(\La)=\sup\Big\{\gamma\in\R\ \Big|\,\Pi(\vec z)\leq|\vec z|^{-\gamma}\text{ for infinitely many }\vec z\in\La \Big\},\]
where $|\cdot|$ is again the supremum norm. It follows from Minkowski's convex body theorem that $\omega(\La)$ is nonnegative for every $\La$.

Consider the dual lattice
\[ \La^\ast=\Big\{ \vec w\in\R^d \,\Big|\, \langle\vec w,\vec z\rangle\in\Z\text{ for each }\vec z\in\La \Big\}, \]
where $\langle\,\cdot\,,\,\cdot\,\rangle$ is the inner product.

In this setting the phenomenon of transference can also be observed.
The following result was obtained in \cite{german_2017}.

\begin{theorem} \label{t:lattice_transference}
  The exponents $\omega(\La),\omega(\La^\ast)$ are simultaneously zero. If they are nonzero, then
  \begin{equation} \label{eq:lattice_transference}
    1+\omega(\La)^{-1}\leq
    (d-1)^2(1+\omega(\La^\ast)^{-1}).
  \end{equation}
\end{theorem}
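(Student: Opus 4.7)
The statement is symmetric under $\La \leftrightarrow \La^\ast$, so the plan is to show that for every $\gamma^\ast \in (0,\omega(\La^\ast))$ one has $1 + \omega(\La)^{-1} \leq (d-1)^2(1 + (\gamma^\ast)^{-1})$; letting $\gamma^\ast \nearrow \omega(\La^\ast)$ then yields \eqref{eq:lattice_transference}, and since the right-hand side keeps $\omega(\La)^{-1}$ finite whenever $\omega(\La^\ast) > 0$, applying the same reasoning in the other direction handles the ``simultaneously zero'' claim as well. The whole argument sits inside multiparametric geometry of numbers and rests on three standard ingredients: Mahler duality, which in this notation reads $L_k(\La,\pmb\tau) + L_{d+1-k}(\La^\ast,-\pmb\tau) = O(1)$; Minkowski's second theorem, giving $L_1 + \ldots + L_d = O(1)$ for a covolume-$1$ lattice; and the elementary inequality $\prod_i|w_i| \leq \min_i|w_i|\cdot|\vec w|^{d-1}$.

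For each $\vec w \in \La^\ast$ with all coordinates nonzero satisfying $\Pi(\vec w) \leq |\vec w|^{-\gamma^\ast}$ (there are infinitely many such by the definition of $\omega(\La^\ast)$; the vectors on coordinate hyperplanes can be dealt with separately in a routine way), introduce the canonical parameter $\pmb\sigma = \pmb\sigma(\vec w) \in \cT$ defined by $\sigma_i = \log|w_i| - \log\Pi(\vec w)$, so that $\vec w \in \Pi(\vec w)\,\cB_{\pmb\sigma}$ and hence $L_1(\La^\ast,\pmb\sigma) \leq \log\Pi(\vec w)$. Setting $\pmb\tau = -\pmb\sigma$, Mahler duality yields $L_d(\La,\pmb\tau) \geq -\log\Pi(\vec w) + O(1)$. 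Combined with Minkowski's theorem and the monotonicity $L_1 \leq \ldots \leq L_d$, this forces
\[
L_1(\La,\pmb\tau) \leq -\frac{L_d(\La,\pmb\tau)}{d-1} + O(1) \leq \frac{\log\Pi(\vec w)}{d-1} + O(1),
\]
which accounts for the first factor of $d-1$.

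Take $\vec z \in \La$ realizing the first minimum, so $|z_i| \leq \exp(L_1(\La,\pmb\tau) + \tau_i)$. Since $\sum_i\tau_i = 0$, one immediately reads $\log\Pi(\vec z) \leq L_1(\La,\pmb\tau)$, whereas $\log|\vec z| \leq L_1(\La,\pmb\tau) + \max_i\tau_i$ is useful only once $\max_i\tau_i$ is controlled. This is the main technical obstacle: $\max_i\tau_i = \log\Pi(\vec w) - \log\min_i|w_i|$ involves $\min_i|w_i|$, which the Diophantine-exponent hypothesis alone does not control. The third ingredient saves the day, giving $\log\min_i|w_i| \geq d\log\Pi(\vec w) - (d-1)\log|\vec w|$ and hence $\max_i\tau_i \leq (d-1)(\log|\vec w| - \log\Pi(\vec w))$, providing the second factor of $d-1$.

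Writing $a = -\log\Pi(\vec w)$ and $b = \log|\vec w|$ (so $a \geq \gamma^\ast b$), the estimates assemble to $-\log\Pi(\vec z) \geq a/(d-1) + O(1)$ and $\log|\vec z| \leq d(d-2)a/(d-1) + (d-1)b + O(1)$. Dividing and taking the limit as $|\vec w| \to \infty$ along the sequence of witnesses gives
\[
\omega(\La) \geq \frac{1}{d(d-2) + (d-1)^2/\gamma^\ast},
\]
and the identity $1 + d(d-2) = (d-1)^2$ rearranges this to the claimed $1 + \omega(\La)^{-1} \leq (d-1)^2(1 + (\gamma^\ast)^{-1})$.
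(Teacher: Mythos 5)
Your argument is correct, and in substance it is the paper's own proof with the parametric formalism stripped away rather than a genuinely different route: your assignment $\vec w\mapsto\pmb\sigma(\vec w)$, $\sigma_i=\log|w_i|-\log\Pi(\vec w)$, is exactly the logarithmic bijection used in the paper's Proposition \ref{prop:omega_vs_psi}; your chain ``Mahler duality $+$ Minkowski's second theorem $+$ $L_1\leq\ldots\leq L_d$'' is precisely how the paper gets $S_1(\La,\pmb\tau)\leq S_1(\La^\ast,-\pmb\tau)/(d-1)+O(1)$ (Corollary \ref{cor:essence_of_transference}, via Propositions \ref{prop:properties_of_S_k} and \ref{prop:L_and_S_for_dual_lattices}); and your bound $\max_i\tau_i\leq(d-1)(\log|\vec w|-\log\Pi(\vec w))$, coming from $\prod_i|w_i|\leq\min_i|w_i|\cdot|\vec w|^{d-1}$, is the inequality $|\pmb\tau|_-\leq(d-1)|\pmb\tau|_+$ of \eqref{eq:tau_minus_vs_tau_plus}, i.e.\ Lemma \ref{l:f_ast_vs_f}, which is where the paper's second factor $d-1$ also comes from. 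What your version buys is self-containedness (no Schmidt--Summerer exponents, no translation $(1+\omega)(1+\bpsi)=1$); what it loses is the intermediate-exponent splitting that is the paper's actual point. Two degenerate cases should be made explicit rather than waved at: (a) if the witnesses $\vec w$ (or all but finitely many of them) lie on coordinate hyperplanes, then $\omega(\La^\ast)=+\infty$ and you must still produce $\omega(\La)\geq 1/(d(d-2))$, e.g.\ by running your construction with the zero coordinates of $\vec w$ replaced by $e^{-t}$ and letting $t\to\infty$, which reproduces the $\gamma^\ast\to\infty$ limit of your formula; and (b) your final step needs infinitely many \emph{distinct} solutions $\vec z$, which is not automatic from ``taking the limit along the witnesses'' --- the patch is that if the vectors $\vec z(\vec w)$ ranged over a finite set, then $-\log\Pi(\vec z(\vec w))\geq a/(d-1)+O(1)\to\infty$ would force some fixed $\vec z_0$ to have $\Pi(\vec z_0)=0$, whence $\omega(\La)=+\infty$ via its multiples and the inequality is trivial. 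With those two remarks inserted, the proof is complete.
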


In Section \ref{sec:lattice_exp} we interpret \eqref{eq:lattice_transference} in terms of parametric geometry of numbers and split it into a chain of inequalities for intermediate exponents we define therein.

\vskip 5mm
The rest of the paper is organised as follows. In Section \ref{sec:properties_of_L_and_S} we study local properties of the functions $L_k$ and $S_k$. In Sections \ref{sec:weighted_exp} and \ref{sec:lattice_exp} we apply the results of Section \ref{sec:properties_of_L_and_S} to the theory of Diophantine approximation with weights and to the theory of Diophantine exponents of lattices, respectively. In both cases we define intermediate exponents of two types (Sections \ref{sec:weighted_inter_exp}--\ref{sec:weighted_dual} and \ref{sec:lattice_inter_exp}--\ref{sec:lattice_inter_exp_second_type}) basing on the intuition provided by the parametric approach and by the Schmidt--Summerer exponents that naturally arise within the parametric geometry of numbers. In Sections \ref{sec:application_weighted} and \ref{sec:application_lattice_exp} we perform the announced splitting of Theorems \ref{t:weighted_Dyson} and \ref{t:lattice_transference} respectively. Section \ref{sec:diagram} complements the case of weighted Diophantine approximation with a diagram demonstrating the geometry of the two-dimensional subspace of $\cT$ corresponding to the given weights from the point of view of the transference phenomenon. Finally, in Sections \ref{sec:unit_product} and \ref{sec:pseudo_unit_product}, we prove two series of transference relations for intermediate exponents of the first type. As a corollary of those relations, we obtain in Section \ref{sec:inhomogeneous} a transference theorem for the inhomogeneous weighted Diophantine approximation proved recently by S.\,Chow, A.\,Ghosh et al. \cite{ghosh_marnat_Pisa_2019}.

\section{Local properties of $L_k$ and $S_k$}\label{sec:properties_of_L_and_S}

In this Section we use Minkowski's second theorem and Mahler's theorem on successive minima of compound bodies to derive some important properties of $L_k(\pmb\tau)$ and $S_k(\pmb\tau)$.

For each $\pmb\tau\in\cT$ set
\[|\pmb\tau|_+=\max_{1\leq i\leq d}\tau_i,
  \qquad
  |\pmb\tau|_-=|-\pmb\tau|_+=-\min_{1\leq i\leq d}\tau_i.\]
Clearly,
\[|\pmb\tau|=\max(|\pmb\tau|_-,|\pmb\tau|_+),\]
\begin{equation}\label{eq:tau_minus_vs_tau_plus}
  |\pmb\tau|_+/(d-1)\leq|\pmb\tau|_-\leq(d-1)|\pmb\tau|_+\ .
\end{equation}

\begin{proposition} \label{prop:properties_of_L_k}
  The functions $L_k(\pmb\tau)$ enjoy the following properties:

  \vskip1.5mm
  \textup{(i)} $L_1(\pmb\tau)\leq\ldots\leq L_d(\pmb\tau)$;

  \vskip1.5mm
  \textup{(ii)} $0\leq-L_1(\pmb\tau)\leq|\pmb\tau|_++O(1)$;

  \vskip1.5mm
  \textup{(iii)} $L_d(\pmb\tau)\leq|\pmb\tau|_-+O(1)$;

  \vskip1.5mm
  \textup{(iv)} each $L_k(\pmb\tau)$ is continuous and piecewise linear.
\end{proposition}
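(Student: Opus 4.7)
Parts (i)--(iii) should follow by routine applications of the definition of successive minima together with elementary box-containments, with (iv) being the only part that calls for a little genuine thought. I will treat the four items in order.

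Part (i) is just the logarithm of the defining monotonicity $\lambda_1(\cB_{\pmb\tau}) \leq \cdots \leq \lambda_d(\cB_{\pmb\tau})$. For the left-hand inequality in (ii) observe that $\vol(\cB_{\pmb\tau}) = 2^d e^{\tau_1+\cdots+\tau_d} = 2^d = 2^d\covol(\La)$, so by Minkowski's first convex body theorem $\cB_{\pmb\tau}$ contains a nonzero lattice vector and hence $L_1(\pmb\tau) \leq 0$. For the right-hand inequality in (ii) I would exploit the obvious inclusion $\cB_{\pmb\tau} \subseteq e^{|\pmb\tau|_+}\cB$, which is valid because $\tau_i \leq |\pmb\tau|_+$ for every $i$; this gives $\lambda_1(\cB_{\pmb\tau}) \geq e^{-|\pmb\tau|_+}\lambda_1(\cB)$, and taking logarithms yields $-L_1(\pmb\tau) \leq |\pmb\tau|_+ + O(1)$, with $-\log\lambda_1(\cB)$ absorbed into the $O(1)$. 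Symmetrically, for (iii) the dual inclusion $e^{-|\pmb\tau|_-}\cB \subseteq \cB_{\pmb\tau}$ (since $\tau_i \geq -|\pmb\tau|_-$) gives $\lambda_d(\cB_{\pmb\tau}) \leq e^{|\pmb\tau|_-}\lambda_d(\cB)$, and hence $L_d(\pmb\tau) \leq |\pmb\tau|_- + O(1)$.

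The substantive work is in (iv). My starting point is the explicit formula
\[
  L_k(\pmb\tau) \;=\; \min_{(\vec v_1,\ldots,\vec v_k)}\,\max_{1 \leq j \leq k}\,\max_{i\,:\,v_{ji} \neq 0}\bigl(\log|v_{ji}| - \tau_i\bigr),
\]
where the outer minimum ranges over ordered $k$-tuples of linearly independent vectors $\vec v_j = (v_{j1},\ldots,v_{jd}) \in \La\setminus\{\vec 0\}$. This formula is immediate from the observation that the smallest $\lambda > 0$ with $\vec v \in \lambda\cB_{\pmb\tau}$ equals $\max_{i:v_i\neq 0}|v_i|e^{-\tau_i}$. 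Each inner max of maxes is a maximum of finitely many affine functions of $\pmb\tau$, hence convex and piecewise linear on $\cT$. On any compact neighbourhood $U \subset \cT$ of a fixed $\pmb\tau_0$, the estimate (iii) supplies a uniform upper bound $M$ for $L_k$ on $U$, and any $k$-tuple whose entries are too large in the sup norm cannot attain a value $\leq M$ anywhere on $U$; the discreteness of $\La$ then restricts the outer minimum on $U$ to a finite subfamily of $k$-tuples. Consequently $L_k$ coincides on $U$ with the minimum of finitely many continuous piecewise linear functions and is therefore continuous and piecewise linear itself.

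The only real obstacle is the local finiteness step in (iv): one must check that, on a compact neighbourhood, only finitely many $k$-tuples can realise the outer minimum. This is precisely the place where (iii) and the discreteness of $\La$ intervene, so (iv) is not quite independent of the earlier parts but falls out easily once the uniform growth bounds of (ii)--(iii) are in hand.
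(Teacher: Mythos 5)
Your proposal is correct, and for parts (i)--(iii) it is essentially identical to the paper's proof: the same appeal to Minkowski's first theorem for $L_1(\pmb\tau)\leq0$, and the same two inclusions (you write $\cB_{\pmb\tau}\subseteq e^{|\pmb\tau|_+}\cB$ and $e^{-|\pmb\tau|_-}\cB\subseteq\cB_{\pmb\tau}$, the paper writes the equivalent $e^{-|\pmb\tau|_+}\cB_{\pmb\tau}\subset\cB$ and $e^{|\pmb\tau|_-}\cB_{\pmb\tau}\supset\cB$). For (iv) you take a genuinely different, and in fact more careful, route. The paper introduces $L_{\vec z}(\pmb\tau)=\max_{i:\,z_i\neq0}(\log|z_i|-\tau_i)$ and the set $\La_k$ of lattice vectors realising the $k$-th minimum for at least one $\pmb\tau$, and then asserts $L_k(\pmb\tau)=\min_{\vec z\in\La_k}L_{\vec z}(\pmb\tau)$; as literally stated this identity needs qualification for $k\geq2$, since a vector of $\La_k$ may realise a \emph{smaller} minimum at the given $\pmb\tau$ and make the right-hand side undershoot (already for $\La=\Z^2$ the vector $(1,0)$ belongs to $\La_2$, yet at $\pmb\tau=(t,-t)$ with $t>0$ one has $L_{(1,0)}(\pmb\tau)=-t<t=L_2(\pmb\tau)$), and in any case one still needs some local finiteness to deduce piecewise linearity from a minimum over an infinite family. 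Your argument avoids both issues: the min--max representation of $L_k$ over linearly independent $k$-tuples is exact (the minimum is attained because $\cB_{\pmb\tau}$ is compact and $\La$ discrete), and your localisation step --- bounding $L_k$ on a compact neighbourhood via (i) and (iii) and using discreteness to cut the outer minimum down to finitely many tuples --- supplies exactly the finiteness the paper leaves implicit. The cost is a heavier indexing by $k$-tuples instead of single vectors; the gain is a complete and robust proof of (iv) for all $k$.
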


\begin{proof}
Statement \textup{(i)} follows immediately from the definition of successive minima. The inequality $L_1(\pmb\tau)\leq0$ is a corollary of Minkowski's convex body theorem. The rest of \textup{(ii)} and \textup{(iii)} is provided by
\[e^{-|\pmb\tau|_+}\cB_{\pmb\tau}\subset\cB\implies
  \lambda_1(e^{-|\pmb\tau|_+}\cB_{\pmb\tau})\geq\lambda_1(\cB)\implies
  \lambda_1(\cB_{\pmb\tau})\geq e^{-|\pmb\tau|_+}\lambda_1(\cB)\]
and
\[e^{|\pmb\tau|_-}\cB_{\pmb\tau}\supset\cB\implies
  \lambda_d(e^{|\pmb\tau|_-}\cB_{\pmb\tau})\leq\lambda_d(\cB)\implies
  \lambda_d(\cB_{\pmb\tau})\leq e^{|\pmb\tau|_-}\lambda_d(\cB).\]
Here, as before, we denote by $\cB$ the cube that is the unit ball in supremum norm.

Let us prove \textup{(iv)}. For each nonzero $\vec z\in\La$ let us denote by $\lambda_{\vec z}(\cB_{\pmb\tau})$ the infimum of positive $\lambda$ such that $\lambda\cB_{\pmb\tau}$ contains $\vec z$, and set
$L_{\vec z}(\pmb\tau)=\log\big(\lambda_{\vec z}(\cB_{\pmb\tau})\big)$.
If $\vec z=(z_1,\ldots,z_d)$, then
\[
  \lambda_{\vec z}(\cB_{\pmb\tau})=\max_{1\leq i\leq d}(|z_i|e^{-\tau_i}),
  \qquad
  L_{\vec z}(\pmb\tau)=\max_{\substack{1\leq i\leq d \\ z_i\neq0}}\big(\log|z_i|-\tau_i\big),
\]
i.e. $L_{\vec z}(\pmb\tau)$ is continuous and piecewise linear. Note that for each $\pmb\tau$ and each $k=1,\ldots,d$ there is a $\vec z=\vec z(\pmb\tau,k)\in\La$ such that $\lambda_k(\cB_{\pmb\tau})=\lambda_{\vec z}(\cB_{\pmb\tau})$. Hence, denoting
\begin{equation} \label{eq:La_k}
  \La_k=\Big\{ \vec z\in\La\, \Big|\, \exists\,\pmb\tau:\,\lambda_k(\cB_{\pmb\tau})=\lambda_{\vec z}(\cB_{\pmb\tau}) \Big\},
\end{equation}
we get
\[L_k(\pmb\tau)=\min_{\vec z\in\La_k}L_{\vec z}(\pmb\tau).\]
Thus, $L_k(\pmb\tau)$ is indeed continuous and piecewise linear.
\end{proof}

\begin{proposition} \label{prop:properties_of_S_k}
  The functions $S_k(\pmb\tau)$ enjoy the following properties:

  \vskip1.5mm
  \textup{(i)} $-\log d!\leq S_d(\pmb\tau)\leq0$;

  \vskip1.5mm
  \textup{(ii)} $\dfrac{k+1}{k}S_k(\pmb\tau)\leq S_{k+1}(\pmb\tau)\leq\dfrac{d-k-1}{d-k}S_k(\pmb\tau)$;

  \vskip1.5mm
  \textup{(iii)} $(d-1)S_1(\pmb\tau)\leq S_{d-1}(\pmb\tau)\leq S_1(\pmb\tau)/(d-1)$;

  \vskip3mm
  \textup{(iv)} $S_{d-1}(\pmb\tau)=-L_d(\pmb\tau)+O(1)$.
\end{proposition}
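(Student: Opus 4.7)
The plan is to derive all four parts from two ingredients: Minkowski's second theorem for (i), and the monotonicity $L_1(\pmb\tau)\leq\ldots\leq L_d(\pmb\tau)$ from Proposition~\ref{prop:properties_of_L_k}(i) for (ii). Parts (iii) and (iv) will follow by elementary manipulation.

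For (i), the key observation is that $\vol(\cB_{\pmb\tau})=2^d\prod_ie^{\tau_i}=2^d$ because $\sum_i\tau_i=0$, and $\covol(\La)=1$. Minkowski's second theorem then yields
\[\frac{1}{d!}\leq\prod_{k=1}^d\lambda_k(\cB_{\pmb\tau})\leq 1,\]
and taking logarithms gives the stated bounds.

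For (ii), I would use the monotonicity of $L_k$ to sandwich $L_{k+1}$ between the average $S_k/k$ of the first $k$ logarithmic minima (from below) and the average $(S_d-S_k)/(d-k)$ of the last $d-k$ (from above). The lower bound $kL_{k+1}\geq S_k$ rearranges to $S_{k+1}=S_k+L_{k+1}\geq\tfrac{k+1}{k}S_k$. The upper bound $(d-k)L_{k+1}\leq S_d-S_k$, combined with $S_d\leq 0$ from (i), gives $L_{k+1}\leq-S_k/(d-k)$, which rearranges to $S_{k+1}\leq\tfrac{d-k-1}{d-k}S_k$.

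Part (iii) follows by telescoping (ii): iterating the lower estimate gives $S_k\geq kS_1$ for all $k$, hence $S_{d-1}\geq(d-1)S_1$; iterating the upper one gives $S_k\leq\tfrac{d-k}{d-1}S_1$, hence $S_{d-1}\leq S_1/(d-1)$. All multiplicative factors involved are positive, so inequality directions are preserved regardless of the signs of the intermediate $S_k$. Finally, (iv) is immediate from $S_{d-1}=S_d-L_d$ together with the bound $|S_d|\leq\log d!=O(1)$ furnished by (i). The only nontrivial input anywhere in the argument is Minkowski's second theorem; once the sign information $S_d\leq 0$ is in hand, the remaining statements are purely algebraic consequences of the monotonicity of the successive minima.
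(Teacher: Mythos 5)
Your proposal is correct and follows essentially the same route as the paper: Minkowski's second theorem for (i), then sandwiching $L_{k+1}$ between $S_k/k$ and $-S_k/(d-k)$ via the monotonicity of the $L_j$ and $S_d\leq0$ to get (ii), with (iii) obtained by iterating (ii) and (iv) read off from $S_{d-1}=S_d-L_d$ and the bound on $S_d$. Your explicit remark that the positive factors preserve the inequality directions when telescoping is a welcome, if minor, addition to what the paper leaves implicit.
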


\begin{proof}
  Statement \textup{(i)} follows from Minkowski's second theorem, which states that
  \[\frac1{d!}\leq\prod_{1\leq k\leq d}\lambda_k(\cB_{\pmb\tau})\leq1.\]

  Furthermore, statement \textup{(i)} of Proposition \ref{prop:properties_of_L_k} and statement \textup{(i)} of the current Proposition imply
  \[S_k(\pmb\tau)\leq kL_{k+1}(\pmb\tau)\]
  and
  \[S_k(\pmb\tau)+(d-k)L_{k+1}(\pmb\tau)\leq S_d(\pmb\tau)\leq0.\]
  Hence
  \[\frac1{k}S_k(\pmb\tau)\leq L_{k+1}(\pmb\tau)\leq\dfrac{-1}{d-k}S_k(\pmb\tau),\]
  and \textup{(ii)} follows.

  Applying statement \textup{(ii)} consequently, we get statement \textup{(iii)}.

  As for statement \textup{(iv)}, it is an immediate corollary of statement \textup{(i)}.
\end{proof}

We remind that $\La^\ast$ denotes the dual lattice.

\begin{proposition} \label{prop:L_and_S_for_dual_lattices}
  For every $\pmb\tau\in\cT$ we have

  \vskip1.5mm
  \textup{(i)} 
  \hskip2.5mm
  $-\log d\leq L_k(\La,\pmb\tau)+L_{d+1-k}(\La^\ast,-\pmb\tau)\leq\log d!,\qquad\quad\ k=1,\ldots,d$;

  \vskip1.5mm
  \textup{(ii)} 
  $-k\log d\leq S_k(\La,\pmb\tau)-S_{d-k}(\La^\ast,-\pmb\tau)\leq(k+1)\log d!,\quad k=1,\ldots,d-1$.
\end{proposition}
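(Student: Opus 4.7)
The plan is to reduce both parts to Mahler's theorem on the successive minima of a convex body and its polar, paying attention to the fact that the polar of the cube $\cB$ is the cross-polytope rather than $\cB$ itself.

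First I would observe that for a diagonal scaling $D=\textup{diag}(e^{\tau_1},\ldots,e^{\tau_d})$ one has $(D\cB)^\ast=D^{-1}\cB^\ast$, so the polar of $\cB_{\pmb\tau}$ equals $\textup{diag}(e^{-\tau_1},\ldots,e^{-\tau_d})\cB^\ast$, where $\cB^\ast$ is the unit $\ell^1$-ball. Since $\tfrac1d\cB\subseteq\cB^\ast\subseteq\cB$, this gives the sandwich
\[
  \tfrac1d\,\cB_{-\pmb\tau}\subseteq(\cB_{\pmb\tau})^\ast\subseteq\cB_{-\pmb\tau},
\]
whence
\[
  \lambda_j(\cB_{-\pmb\tau},\La^\ast)\leq\lambda_j\big((\cB_{\pmb\tau})^\ast,\La^\ast\big)\leq d\,\lambda_j(\cB_{-\pmb\tau},\La^\ast)
\]
for every $j$. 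Mahler's duality for successive minima reads
\[
  1\leq\lambda_k(\cB_{\pmb\tau},\La)\cdot\lambda_{d+1-k}\big((\cB_{\pmb\tau})^\ast,\La^\ast\big)\leq d!\,,
\]
and combining these two inequalities and taking logarithms yields exactly statement \textup{(i)}.

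For statement \textup{(ii)} I would simply sum \textup{(i)} for $j=1,\ldots,k$, after reindexing. Setting $j'=d+1-j$ in the second summand gives
\[
  \sum_{j=1}^{k}\!\big(L_j(\La,\pmb\tau)+L_{d+1-j}(\La^\ast,-\pmb\tau)\big)
  =S_k(\La,\pmb\tau)+\!\!\!\sum_{j'=d-k+1}^{d}\!\!\!L_{j'}(\La^\ast,-\pmb\tau),
\]
and by \textup{(i)} the left-hand side lies in $[-k\log d,\,k\log d!]$. The last sum equals $S_d(\La^\ast,-\pmb\tau)-S_{d-k}(\La^\ast,-\pmb\tau)$, so
\[
  S_k(\La,\pmb\tau)-S_{d-k}(\La^\ast,-\pmb\tau)
  =\Big(\text{quantity in }[-k\log d,\,k\log d!]\Big)-S_d(\La^\ast,-\pmb\tau).
\]
By statement \textup{(i)} of Proposition \ref{prop:properties_of_S_k} applied to $\La^\ast$, the term $S_d(\La^\ast,-\pmb\tau)$ lies in $[-\log d!,0]$; assembling the two estimates produces the announced bounds $-k\log d$ and $(k+1)\log d!$.

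The only genuine subtlety is the passage from $(\cB_{\pmb\tau})^\ast$ to $\cB_{-\pmb\tau}$ — i.e.\ remembering that $\cB$ is not self-polar — but the dimensional factor it produces is absorbed into the constants. Everything else is bookkeeping once Mahler's theorem and Minkowski's second theorem are available.
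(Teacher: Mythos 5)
Your proposal is correct and follows essentially the same route as the paper: Mahler's duality for successive minima combined with the sandwich $\tfrac1d\,\cB_{-\pmb\tau}\subseteq(\cB_{\pmb\tau})^\circ\subseteq\cB_{-\pmb\tau}$ gives \textup{(i)}, and summing \textup{(i)} together with the bound $-\log d!\leq S_d(\La^\ast,-\pmb\tau)\leq 0$ from Minkowski's second theorem gives \textup{(ii)}. No gaps.
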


\begin{proof}
  In his paper \cite{mahler_casopis_convex} K.\,Mahler proved that for a parallelepiped $\cB_{\pmb\tau}$ and its polar cross-polytope $\cB_{\pmb\tau}^\circ$ we have
  \begin{equation} \label{eq:mahler_for_cube_and_octahedron}
    1\leq\lambda_k(\cB_{\pmb\tau},\La)\lambda_{d+1-k}(\cB_{\pmb\tau}^\circ,\La^\ast)\leq d!.
  \end{equation}
  Since
  \[\cB_{\pmb\tau}^\circ=
    (D_{\pmb\tau}\cB)^\circ=
    D_{\pmb\tau}^{-1}\cB^\circ=
    D_{-\pmb\tau}\cB^\circ,\]
  we have
  \[d^{-1}\cB_{-\pmb\tau}\subset\cB_{\pmb\tau}^\circ\subset\cB_{-\pmb\tau}.\]
  Therefore, \eqref{eq:mahler_for_cube_and_octahedron} implies
  \[\frac1d\leq\lambda_k(\cB_{\pmb\tau},\La)\lambda_{d+1-k}(\cB_{-\pmb\tau},\La^\ast)\leq d!.\]
  Taking the $\log$ of all sides, we get \textup{(i)}.

  Furthermore, statement \textup{(i)} implies
  \begin{equation} \label{eq:L_for_dual_lattices_summed_up}
    -k\log d\leq S_k(\La,\pmb\tau)+\big(S_d(\La^\ast,-\pmb\tau)-S_{d-k}(\La^\ast,-\pmb\tau)\big)\leq k\log d!,
  \end{equation}
  whereas by statement \textup{(i)} of Proposition \ref{prop:properties_of_S_k}
  \begin{equation} \label{eq:S_d_for_dual_lattice}
    0\leq-S_d(\La^\ast,-\pmb\tau)\leq\log d!.
  \end{equation}
  Summing up \eqref{eq:L_for_dual_lattices_summed_up} and \eqref{eq:S_d_for_dual_lattice}, we get \textup{(ii)}.
\end{proof}

\begin{proposition} \label{prop:essence_of_transference_split_up}
  For every $\pmb\tau\in\cT$ we have

  \textup{(i)}
  $L_k(\La,\pmb\tau)=-L_{d+1-k}(\La^\ast,-\pmb\tau)+O(1),\ \ k=1,\ldots,d$; \vphantom{$\frac{\big|}{|}$}

  \textup{(ii)}
  $S_k(\La,\pmb\tau)=S_{d-k}(\La^\ast,-\pmb\tau)+O(1),\ \ k=1,\ldots,d-1$; \vphantom{$\frac{\big|}{|}$}

  \textup{(iii)}
  $\displaystyle
    S_1(\La,\pmb\tau)\leq\ldots\leq
    \frac{S_k(\La,\pmb\tau)}{k}\leq\ldots\leq
    \frac{S_{d-1}(\La,\pmb\tau)}{d-1}\leq
    \frac{S_{d}(\La,\pmb\tau)}{d}=O(1)$;

  \textup{(iv)}
  $\displaystyle
    \frac{S_1(\La,\pmb\tau)}{d-1}\geq\ldots\geq
    \frac{S_k(\La,\pmb\tau)}{d-k}\geq\ldots\geq
    S_{d-1}(\La,\pmb\tau)$.
  \\
  \vphantom{$\frac{\big|}{}$}
  Here we assume that the implied constants depend only on $d$.
\end{proposition}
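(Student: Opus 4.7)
The plan is to derive all four statements by straightforward manipulation of the two preceding propositions of this section; no new Diophantine input should be needed, so the proof is essentially bookkeeping.

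For (i) I would invoke Proposition \ref{prop:L_and_S_for_dual_lattices}(i), which already gives the two-sided bound $-\log d \leq L_k(\La,\pmb\tau)+L_{d+1-k}(\La^\ast,-\pmb\tau)\leq\log d!$; since the bounding constants depend only on $d$, rearranging and absorbing them into $O(1)$ yields (i) immediately. Statement (ii) follows from Proposition \ref{prop:L_and_S_for_dual_lattices}(ii) in exactly the same way.

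For (iii), I would note that, after division by the positive number $k+1$, the left inequality of Proposition \ref{prop:properties_of_S_k}(ii) says precisely $S_k(\pmb\tau)/k\leq S_{k+1}(\pmb\tau)/(k+1)$; iterating over $k=1,\ldots,d-1$ produces the ascending chain. The terminal bound $S_d(\pmb\tau)/d=O(1)$ comes from dividing the two-sided estimate $-\log d!\leq S_d(\pmb\tau)\leq 0$ of Proposition \ref{prop:properties_of_S_k}(i) by $d$, with an implied constant again depending only on $d$.

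For (iv), I would similarly start from the right inequality of Proposition \ref{prop:properties_of_S_k}(ii) and divide through by the positive factor $d-k-1$, which is valid for $k\leq d-2$, to obtain $S_{k+1}(\pmb\tau)/(d-k-1)\leq S_k(\pmb\tau)/(d-k)$; this is one step of the descending chain in (iv), and the full chain is assembled by iteration. The only subtlety worth flagging is that (iii) and (iv) point in opposite directions precisely because the two halves of Proposition \ref{prop:properties_of_S_k}(ii) are two-sided bounds on $S_{k+1}(\pmb\tau)$, and since the divisions above are by positive quantities the sign of $S_k(\pmb\tau)$ never matters; beyond this I do not foresee any real obstacle.
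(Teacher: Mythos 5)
Your proposal is correct and follows exactly the paper's own route: the paper derives (i), (ii) from Proposition \ref{prop:L_and_S_for_dual_lattices} and (iii), (iv) from statements (i), (ii) of Proposition \ref{prop:properties_of_S_k}, which is precisely what you do, only with the (correct) arithmetic written out. Your handling of the divisions, including restricting the step for (iv) to $k\leq d-2$, matches what the paper leaves implicit.
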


\begin{proof}
  Statements \textup{(i)}, \textup{(ii)} follow from Proposition \ref{prop:L_and_S_for_dual_lattices}. Statements \textup{(iii)}, \textup{(iv)} follow from statements \textup{(i)}, \textup{(ii)} of Proposition \ref{prop:properties_of_S_k}.
\end{proof}

\begin{corollary} \label{cor:essence_of_transference}
  For every $\pmb\tau\in\cT$ we have
  \[
    S_1(\La,\pmb\tau)\leq\dfrac{S_1(\La^\ast,-\pmb\tau)}{d-1}+O(1),
  \]
  assuming that the implied constant depends only on $d$.
\end{corollary}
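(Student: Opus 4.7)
The plan is to deduce the corollary from Proposition \ref{prop:essence_of_transference_split_up} by composing two of its parts in a single line of reasoning; no additional input from Minkowski or Mahler is required beyond what is already absorbed into that proposition.

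First, I would apply statement \textup{(iii)} of Proposition \ref{prop:essence_of_transference_split_up} to the lattice $\La$, reading only the endpoints of the nondecreasing chain $S_1\leq\ldots\leq S_{d-1}/(d-1)$. This gives
\[
  S_1(\La,\pmb\tau)\leq\frac{S_{d-1}(\La,\pmb\tau)}{d-1}.
\]
Then I would invoke statement \textup{(ii)} of the same proposition with $k=d-1$, which dualises the numerator:
\[
  S_{d-1}(\La,\pmb\tau)=S_1(\La^\ast,-\pmb\tau)+O(1).
\]
Substituting the second identity into the first and pulling the additive $O(1)$ out of the division by $d-1$ (which only rescales the hidden constant) yields the desired inequality.

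I do not anticipate any real obstacle: the corollary is essentially a two-step bookkeeping exercise on top of Proposition \ref{prop:essence_of_transference_split_up}. The only subtlety worth checking is that the implied constant in \textup{(ii)} depends solely on $d$, as stated in the proposition; this ensures that after dividing by $d-1$ the resulting $O(1)$ in the conclusion still depends only on $d$, matching the hypothesis of the corollary.
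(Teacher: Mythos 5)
Your proof is correct and is exactly the route the paper intends: the paper gives no separate proof of the corollary, but remarks that statements \textup{(ii)} and \textup{(iii)} of Proposition \ref{prop:essence_of_transference_split_up} split it, i.e. precisely your chain $S_1(\La,\pmb\tau)\leq S_{d-1}(\La,\pmb\tau)/(d-1)$ followed by $S_{d-1}(\La,\pmb\tau)=S_1(\La^\ast,-\pmb\tau)+O(1)$. Your bookkeeping of the $d$-dependent constant is also fine.
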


As we shall see in the next two Sections, Corollary \ref{cor:essence_of_transference} is the core of both Theorem \ref{t:weighted_Dyson} and Theorem \ref{t:lattice_transference}. Evidently, statements \textup{(ii)} and \textup{(iii)} of Proposition \ref{prop:essence_of_transference_split_up} split Corollary \ref{cor:essence_of_transference} into a chain of inequalities between the corresponding values of $S_k$.

\section{Weighted exponents and multiparametric geometry of numbers}\label{sec:weighted_exp}

\subsection{Intermediate exponents}\label{sec:weighted_inter_exp}

As in Section \ref{sec:weighted_DA}, let us fix an $n\times m$ real matrix $\Theta$ and weights $\pmb\sigma=(\sigma_1,\ldots,\sigma_m)\in\R^m$, $\pmb\rho=(\rho_1,\ldots,\rho_n)\in\R^n$ such that
\[\sigma_1\geq\ldots\geq\sigma_m>0,\qquad
  \rho_1\geq\ldots\geq\rho_n>0,\qquad
  \sum_{j=1}^m\sigma_j=\sum_{i=1}^n\rho_i=1.\]
Let us supplement $\omega_{\pmb\sigma,\pmb\rho}(\Theta)$ with the following family of intermediate exponents.

\begin{definition}\label{def:intermediate_ordinary_weighted}
  Let $k$ be an integer, $1\leq k\leq d$.
  We define the $k$-th \emph{weighted Diophantine exponent} $\omega_{\pmb\sigma,\pmb\rho}^{(k)}(\Theta)$ as the supremum of real $\gamma$ such that the system
  \eqref{eq:system_with_weights}
  admits $k$ linearly independent solutions in $(\vec x,\vec y)\in\Z^{m+n}$ for some arbitrarily large $t$.
\end{definition}

Clearly, $\omega_{\pmb\sigma,\pmb\rho}(\Theta)=\omega_{\pmb\sigma,\pmb\rho}^{(1)}(\Theta)$.

\begin{definition}\label{def:intermediate_uniform_weighted}
  Let $k$ be an integer, $1\leq k\leq d$.
  We define the $k$-th \emph{weighted uniform Diophantine exponent} $\hat\omega_{\pmb\sigma,\pmb\rho}^{(k)}(\Theta)$ as the supremum of real $\gamma$ such that the system
  \eqref{eq:system_with_weights}
  admits $k$ linearly independent solutions in $(\vec x,\vec y)\in\Z^{m+n}$ for every $t$ large enough.
\end{definition}

For every $k$ we have
\[
  \omega_{\pmb\sigma,\pmb\rho}^{(k)}(\Theta)\geq\hat\omega_{\pmb\sigma,\pmb\rho}^{(k)}(\Theta)\geq0,
\]
since every parallelepiped determined by \eqref{eq:system_with_weights} with $\gamma\leq0$ and $t\geq1$ contains a basis of $\Z^{m+n}$.
Let us interpret the exponents just defined in terms of multiparametric geometry of numbers.

The choice of a lattice is rather standard for problems concerning systems of linear forms, and it does not depend on the weights. We set
\[
  \La=\La(\Theta)=
  \begin{pmatrix}
    \vec I_m & \\
    -\Theta  & \vec I_n
  \end{pmatrix}
  \Z^d,
  \qquad
  d=m+n.
\]
As for the subset of $\cT$ along which $\pmb\tau$ is supposed to tend to infinity, one could say that it should be a one-dimensional subspace, and this would essentially be true. At least, it is literally true when the weights are trivial (i.e. when all the $\sigma_j$ are equal to $1/m$, and all the $\rho_i$ are equal to $1/n$; a detailed description of this case is given in \cite{german_AA_2012}). However, if the weights are nontrivial, there appears a whole family of one-dimensional subspaces, all of which should be taken into account. Let us set
\begin{equation}\label{eq:e1_e2_definition}
\begin{array}{l}
  \vec e_1=\vec e_1(\pmb\sigma,\pmb\rho)=
  (1-d\sigma_1,\ldots,1-d\sigma_m,\underbrace{1,\ldots,1}_{n}), \\
  \vec e_2=\vec e_2(\pmb\sigma,\pmb\rho)=
  (\underbrace{1,\ldots,1}_{m},1-d\rho_n,\ldots,1-d\rho_1).
  \vphantom{\Big|}
\end{array}
\end{equation}
It is clear that in the trivially weighted setting $\vec e_1$ and $\vec e_2$ are proportional, whereas in the case of non-trivial weights they span a two-dimensional subspace of $\cT$. Let us also set for each $\gamma\in\R$
\begin{equation}\label{eq:pmb_mu}
  \pmb\mu=\pmb\mu(\pmb\sigma,\pmb\rho,\gamma)=-\vec e_1+\gamma\vec e_2.
\end{equation}

\begin{definition}\label{def:psi_weighted}
  Given $\La=\La(\Theta)$, $\pmb\mu=\pmb\mu(\pmb\sigma,\pmb\rho,\gamma)$, and $k\in\{1,\ldots,d\}$, the quantities
  \[\bpsi_k(\La,\pmb\mu)=\liminf_{s\to+\infty}\frac{L_k(\La,s\pmb\mu)}{s},
    \qquad
    \apsi_k(\La,\pmb\mu)=\limsup_{s\to+\infty}\frac{L_k(\La,s\pmb\mu)}{s}\]
  are called the \emph{Schmidt--Summerer lower and upper exponents of the first type}, and the quantities
  \[\bPsi_k(\La,\pmb\mu)=\liminf_{s\to+\infty}\frac{S_k(\La,s\pmb\mu)}{s},
    \qquad
    \aPsi_k(\La,\pmb\mu)=\limsup_{s\to+\infty}\frac{S_k(\La,s\pmb\mu)}{s}\]
  are called the \emph{Schmidt--Summerer lower and upper exponents of the second type}.
\end{definition}

\begin{remark}
  It is easily verified that, if $\pmb\mu=(\mu_1,\ldots,\mu_d)=\pmb\mu(\pmb\sigma,\pmb\rho,\gamma)$, $\gamma\neq0$, then $\pmb\sigma$, $\pmb\rho$, and $\gamma$ are uniquely restored from $\pmb\mu$ by the relations
  \[
    \gamma=\frac1m\bigg(\sum_{j=1}^{m}\mu_j-n\bigg),
    \quad
    \sigma_j=\frac1d\big(\mu_j+1-\gamma\big),
    \quad
    \rho_i=\frac1{d\gamma}\big(\gamma-1-\mu_{d+1-i}\big),
  \]
  $j=1,\ldots,m$, $i=1,\ldots,n$.
\end{remark}

Given arbitrary $s,\gamma\in\R$ and $\pmb\mu=(\mu_1,\ldots,\mu_d)=\pmb\mu(\pmb\sigma,\pmb\rho,\gamma)$, we shall consider the parallelepipeds
\begin{align}
  \label{eq:P_definition}
  \cP(s,\gamma)=
  \bigg\{ \vec z=(z_1,\ldots,z_d)\in\R^d \ &
  \bigg|
  \begin{array}{ll}
    |z_j|\leq e^{s\sigma_j}, & j=1,\ldots,m \\
    |z_{d+1-i}|\leq e^{-s\rho_i\gamma}, & \,i=1,\ldots,n
  \end{array}
  \bigg\}, \\
  \label{eq:Q_definition}
  \cQ(s,\gamma)=
  \Big\{ \vec z=(z_1,\ldots,z_d)\in\R^d \ & \Big|\ |z_k|\leq e^{s\mu_k},\ k=1,\ldots,d \Big\}.
  \qquad\quad\ \
\end{align}

\begin{proposition}\label{prop:redefinition_weighted}
  The exponent $\omega_{\pmb\sigma,\pmb\rho}^{(k)}(\Theta)$ (resp. $\hat\omega_{\pmb\sigma,\pmb\rho}^{(k)}(\Theta)$) equals the supremum of $\gamma\in\R$ such that $\cP(s,\gamma)$ contains $k$ linearly independent points of $\La$ for some arbitrarily large $s$ (resp. for every $s$ large enough).

  The exponent $\bpsi_k(\La,\pmb\mu)$ (resp. $\apsi_k(\La,\pmb\mu)$) equals the infimum of $\chi\in\R$ such that $e^{s\chi}\cQ(s,\gamma)$ contains $k$ linearly independent points of $\La$ for some arbitrarily large $s$ (resp. for every $s$ large enough).
\end{proposition}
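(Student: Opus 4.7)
The plan is to observe that both assertions are essentially definitional, so the proof reduces to careful bookkeeping. I would begin by writing a generic point of $\La(\Theta)$ explicitly as $\vec z = (\vec x, -\Theta\vec x + \vec y)$ with $(\vec x, \vec y) \in \Z^{m+n}$, and then unfold the inequalities defining the two parallelepipeds coordinate by coordinate.

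For the first claim, I would substitute $t = e^s$ into \eqref{eq:system_with_weights}. The inequality $|\vec x|_{\pmb\sigma} \leq t$ is equivalent to $|x_j| \leq e^{s\sigma_j}$ for $j = 1,\ldots,m$, while $|\Theta\vec x - \vec y|_{\pmb\rho} \leq t^{-\gamma}$ is equivalent to $|(\Theta\vec x - \vec y)_i| \leq e^{-s\gamma\rho_i}$ for $i = 1,\ldots,n$. After matching the second block against the convention $z_{d+1-i} \leftrightarrow \rho_i$ used in \eqref{eq:P_definition}, these are exactly the defining inequalities of $\cP(s,\gamma)$. Since $s\mapsto e^s$ is a monotone bijection $\R\to(0,\infty)$, the clauses ``for some arbitrarily large $t$'' and ``for every $t$ large enough'' carry over verbatim to $s$. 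Taking the supremum in $\gamma$ yields the stated equalities for $\omega_{\pmb\sigma,\pmb\rho}^{(k)}(\Theta)$ and $\hat\omega_{\pmb\sigma,\pmb\rho}^{(k)}(\Theta)$.

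For the second claim, I would first observe that, by construction of $\pmb\mu$, the parallelepiped $\cQ(s,\gamma)$ coincides with $\cB_{s\pmb\mu}$, so $\lambda_k(\cQ(s,\gamma)) = \lambda_k(\cB_{s\pmb\mu},\La)$. Writing $\chi_s = L_k(\La,s\pmb\mu)/s$, the infimum in the definition of $\lambda_k$ is attained (by discreteness of $\La$ and boundedness of $\cQ(s,\gamma)$), so $e^{s\chi}\cQ(s,\gamma)$ contains $k$ linearly independent points of $\La$ if and only if $\chi \geq \chi_s$. The proposition then reduces to the elementary identities
\[
  \liminf_{s \to +\infty} \chi_s = \inf\big\{\chi \in \R \,\big|\, \chi \geq \chi_s \text{ for arbitrarily large } s\big\},
\]
\[
  \limsup_{s \to +\infty} \chi_s = \inf\big\{\chi \in \R \,\big|\, \chi \geq \chi_s \text{ for every } s \text{ large enough}\big\},
\]
both of which follow immediately from the definitions of $\liminf$ and $\limsup$.

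No genuine obstacle is anticipated: the entire argument is a translation of definitions. The only step requiring real care is the indexing of the second block in \eqref{eq:P_definition}, where the reversal $z_{d+1-i} \leftrightarrow \rho_i$ must be correlated with the ordering $\rho_1 \geq \ldots \geq \rho_n$ and with the parametrisation \eqref{eq:e1_e2_definition} of $\pmb\mu$; this is an easy point on which to make an off-by-reversal error, but once it is checked the rest is routine.
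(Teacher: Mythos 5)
Your proposal is correct and follows essentially the same route as the paper, which simply observes that the statements follow immediately from Definitions \ref{def:intermediate_ordinary_weighted}, \ref{def:intermediate_uniform_weighted}, \ref{def:psi_weighted}; your write-up just makes the substitution $t=e^s$, the identification $\cQ(s,\gamma)=\cB_{s\pmb\mu}$, and the $\liminf$/$\limsup$ bookkeeping explicit. The indexing reversal $z_{d+1-i}\leftrightarrow\rho_i$ you flag is indeed the only point needing care, and you handle it consistently with the paper's convention in \eqref{eq:P_definition} and \eqref{eq:e1_e2_definition}.
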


\begin{proof}
  The statements immediately follow from Definitions \ref{def:intermediate_ordinary_weighted}, \ref{def:intermediate_uniform_weighted}, \ref{def:psi_weighted}.
\end{proof}

\begin{proposition}\label{prop:weighted_in_terms_of_schmimmerer}
  Given $\La=\La(\Theta)$, $\pmb\mu=\pmb\mu(\gamma)=\pmb\mu(\pmb\sigma,\pmb\rho,\gamma)$, and $k\in\{1,\ldots,d\}$, we have
  \begin{equation}\label{eq:weighted_in_terms_of_schmimmerer}
  \begin{array}{l}
    \omega_{\pmb\sigma,\pmb\rho}^{(k)}(\Theta)=\gamma\
    \iff\
    \bpsi_k(\La,\pmb\mu(\gamma))=1-\gamma, \\
    \hat\omega_{\pmb\sigma,\pmb\rho}^{(k)}(\Theta)=\gamma\
    \iff\
    \apsi_k(\La,\pmb\mu(\gamma))=1-\gamma.
    \vphantom{1^{\big|}}
  \end{array}
  \end{equation}
  Besides that,
  $\bpsi_k(\La,\pmb\mu(\gamma))+\gamma$ and $\apsi_k(\La,\pmb\mu(\gamma))+\gamma$
  are increasing as functions of $\gamma$.
\end{proposition}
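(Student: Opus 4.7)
The strategy is to reduce both equivalences in \eqref{eq:weighted_in_terms_of_schmimmerer} to a single scalar identity and then to prove that identity by a rescaling argument on the parallelepipeds $\cP(s,\gamma)$. The key preliminary identity, obtained by a coordinate-wise comparison of \eqref{eq:e1_e2_definition}, \eqref{eq:pmb_mu}, \eqref{eq:P_definition} and \eqref{eq:Q_definition}, is
\[
\cQ(s,\gamma)=e^{s(\gamma-1)}\cP(ds,\gamma).
\]
Taking $\log\lambda_k$ of both sides and dividing by $s$ gives
\[
\frac{L_k(\La,s\pmb\mu)}{s}=(1-\gamma)+d\cdot\frac{\log\lambda_k(\cP(ds,\gamma),\La)}{ds},
\]
and passing to $\liminf$ and $\limsup$ as $s\to+\infty$ yields
\[
\bpsi_k(\La,\pmb\mu(\gamma))=(1-\gamma)+dF_k(\gamma),\qquad\apsi_k(\La,\pmb\mu(\gamma))=(1-\gamma)+dG_k(\gamma),
\]
where $F_k(\gamma)$ and $G_k(\gamma)$ denote $\liminf_{s\to+\infty}\log\lambda_k(\cP(s,\gamma),\La)/s$ and its $\limsup$ analogue. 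Under this reduction \eqref{eq:weighted_in_terms_of_schmimmerer} becomes the scalar equivalences $F_k(\gamma)=0\iff\gamma=\omega_{\pmb\sigma,\pmb\rho}^{(k)}(\Theta)$ and $G_k(\gamma)=0\iff\gamma=\hat\omega_{\pmb\sigma,\pmb\rho}^{(k)}(\Theta)$.

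Set $\omega=\omega_{\pmb\sigma,\pmb\rho}^{(k)}(\Theta)$. Since $\cP(s,\gamma')\supseteq\cP(s,\gamma)$ whenever $\gamma'\leq\gamma$ and $s\geq0$, the set of $\gamma$ such that $\cP(s,\gamma)$ contains $k$ linearly independent $\La$-vectors for arbitrarily large $s$ is downward closed with supremum $\omega$. This gives at once $F_k(\gamma)\leq0$ for $\gamma<\omega$ and $F_k(\gamma)\geq0$ for $\gamma>\omega$, and the two-sided inclusion $\cP(s,\gamma+\delta)\subset\cP(s,\gamma)\subset e^{s\rho_1\delta}\cP(s,\gamma+\delta)$, valid for $\delta\geq0$, shows $F_k$ to be $\rho_1$-Lipschitz, so $F_k(\omega)=0$ in the limit. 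The converse $F_k(\gamma)=0\Rightarrow\gamma=\omega$ is the crux of the argument: a uniform dilation $e^{s\varepsilon}\cP(s,\gamma)$ does not sit inside any $\cP(s,\gamma')$, because the coordinates scale non-uniformly in $\sigma_j$ and $\rho_i$. The remedy is to perturb $s$ simultaneously with $\gamma$: choosing $\alpha=\varepsilon/\sigma_m$ and $\gamma'=(\gamma-\varepsilon/\rho_n)/(1+\alpha)$, a direct bound-by-bound check, using that $\sigma_m$ and $\rho_n$ are the smallest of the $\sigma_j$ and $\rho_i$, yields
\[
e^{s\varepsilon}\cP(s,\gamma)\subseteq\cP((1+\alpha)s,\gamma').
\]
Thus $F_k(\gamma)\leq0$ forces $\cP((1+\alpha)s,\gamma')$ to contain $k$ linearly independent $\La$-vectors for arbitrarily large $s$, giving $\gamma'\leq\omega$; letting $\varepsilon\to0$ yields $\gamma\leq\omega$. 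The mirror choice $\gamma''=(\gamma+\varepsilon/\rho_n)/(1-\alpha)$ produces $\cP((1-\alpha)s,\gamma'')\subseteq e^{-s\varepsilon}\cP(s,\gamma)$, and $F_k(\gamma)\geq0$ then forces $\gamma''>\omega$, so $\gamma\geq\omega$ in the limit. Together these give $F_k(\gamma)=0\iff\gamma=\omega$. The same argument with $\limsup$ in place of $\liminf$ and with $\hat\omega$ in place of $\omega$ establishes the $\apsi_k$ part.

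Finally, monotonicity of $\bpsi_k(\La,\pmb\mu(\gamma))+\gamma=1+dF_k(\gamma)$ and $\apsi_k(\La,\pmb\mu(\gamma))+\gamma=1+dG_k(\gamma)$ is immediate from the inclusion $\cP(s,\gamma_1)\supseteq\cP(s,\gamma_2)$ for $\gamma_1<\gamma_2$, which gives $\log\lambda_k(\cP(s,\gamma_1),\La)\leq\log\lambda_k(\cP(s,\gamma_2),\La)$ and hence $F_k(\gamma_1)\leq F_k(\gamma_2)$, $G_k(\gamma_1)\leq G_k(\gamma_2)$. The main obstacle is the rescaling step above: one must tune the $s$-axis perturbation $\alpha=\varepsilon/\sigma_m$, controlled by the smallest $\sigma$-weight, together with the $\gamma$-axis perturbation $\varepsilon/\rho_n$, controlled by the smallest $\rho$-weight, to accommodate a uniform dilation of $\cP(s,\gamma)$ inside a genuine $\cP(s',\gamma')$ with $(s',\gamma')$ close to $(s,\gamma)$.
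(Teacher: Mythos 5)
Your proposal is correct and follows essentially the same route as the paper: your preliminary identity $\cQ(s,\gamma)=e^{s(\gamma-1)}\cP(ds,\gamma)$ is exactly the paper's relation \eqref{eq:P_is_Q}, and your monotonicity argument uses the same inclusion $\cP(s,\gamma_2)\subset\cP(s,\gamma_1)$ that underlies \eqref{eq:Q_is_contained_in_Q}. The only difference is that the paper compresses the passage from \eqref{eq:P_is_Q} and Proposition \ref{prop:redefinition_weighted} to the equivalences into a single ``hence'', while you make that step explicit via the simultaneous $(s,\gamma)$-perturbation with $\alpha=\e/\sigma_m$ and $\e/\rho_n$ together with the Lipschitz continuity of $F_k$, which is a correct and complete filling-in of the omitted verification.
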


\begin{proof}
  By \eqref{eq:pmb_mu}
  \begin{equation}\label{eq:mus_explicitly}
    \begin{array}{ll}
      \mu_j=d\sigma_j+\gamma-1, & j=1,\ldots,m, \\
      \mu_{d+1-i}=-d\rho_i\gamma+\gamma-1, & \,i=1,\ldots,n.
    \end{array}
  \end{equation}
  For every $s,\gamma\in\R$ we have by \eqref{eq:mus_explicitly}
  \begin{equation}\label{eq:P_is_Q}
    \cP(ds,\gamma)=e^{s(1-\gamma)}\cQ(s,\gamma).
  \end{equation}
  Hence, in view of Proposition \ref{prop:redefinition_weighted}, \eqref{eq:weighted_in_terms_of_schmimmerer} follows.

  It also follows from \eqref{eq:P_is_Q} that for each $\gamma_1,\gamma_2,\chi\in\R$, $0\leq\gamma_1\leq\gamma_2$, we have
  \begin{equation}\label{eq:Q_is_contained_in_Q}
    e^{s(\chi-\gamma_2)}\cQ(s,\gamma_2)
    \subset
    e^{s(\chi-\gamma_1)}\cQ(s,\gamma_1),
  \end{equation}
  as $\cP(ds,\gamma_2)\subset\cP(ds,\gamma_1)$. Hence the monotonicity of $\bpsi_k(\La,\pmb\mu(\gamma))+\gamma$ and $\apsi_k(\La,\pmb\mu(\gamma))+\gamma$ follows.
\end{proof}

\begin{corollary}\label{cor:weighted_in_terms_of_schmimmerer}
  Within the hypothesis of Proposition \ref{prop:weighted_in_terms_of_schmimmerer}, we have
  \begin{equation*}
  \begin{array}{l}
    \omega_{\pmb\sigma,\pmb\rho}^{(k)}(\Theta)\geq\gamma\
    \iff\
    \bpsi_k(\La,\pmb\mu(\gamma))\leq1-\gamma, \\
    \hat\omega_{\pmb\sigma,\pmb\rho}^{(k)}(\Theta)\geq\gamma\
    \iff\
    \apsi_k(\La,\pmb\mu(\gamma))\leq1-\gamma.
    \vphantom{1^{\big|}}
  \end{array}
  \end{equation*}
\end{corollary}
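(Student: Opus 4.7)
The plan is to deduce the corollary directly from Proposition \ref{prop:weighted_in_terms_of_schmimmerer}, using only the formal content of its two conclusions and not revisiting the geometry of the parallelepipeds $\cP(s,\gamma)$ or $\cQ(s,\gamma)$. Specifically, I would set $\gamma_0:=\omega_{\pmb\sigma,\pmb\rho}^{(k)}(\Theta)$ and introduce the auxiliary function $g(\gamma):=\bpsi_k(\La,\pmb\mu(\gamma))+\gamma$, which by the proposition is monotone non-decreasing in $\gamma$ and, by \eqref{eq:weighted_in_terms_of_schmimmerer}, satisfies $g(\gamma_0)=1$.

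The forward implication is then immediate: if $\gamma\leq\gamma_0$, monotonicity of $g$ gives $g(\gamma)\leq g(\gamma_0)=1$, which rearranges to $\bpsi_k(\La,\pmb\mu(\gamma))\leq 1-\gamma$. For the reverse implication I would argue by contradiction. Suppose $\bpsi_k(\La,\pmb\mu(\gamma))\leq 1-\gamma$, i.e. $g(\gamma)\leq 1$, and assume toward a contradiction that $\gamma>\gamma_0$. Monotonicity then sandwiches $1=g(\gamma_0)\leq g(\gamma)\leq 1$, so $g(\gamma)=1$, i.e. $\bpsi_k(\La,\pmb\mu(\gamma))=1-\gamma$. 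Applying the ``if'' direction of \eqref{eq:weighted_in_terms_of_schmimmerer} at $\gamma$ then forces $\omega_{\pmb\sigma,\pmb\rho}^{(k)}(\Theta)=\gamma$, contradicting $\omega_{\pmb\sigma,\pmb\rho}^{(k)}(\Theta)=\gamma_0<\gamma$.

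The same template works verbatim with $\bpsi_k$ replaced by $\apsi_k$ and $\omega_{\pmb\sigma,\pmb\rho}^{(k)}$ by $\hat\omega_{\pmb\sigma,\pmb\rho}^{(k)}$, using the second line of \eqref{eq:weighted_in_terms_of_schmimmerer} and the corresponding monotonicity of $\apsi_k(\La,\pmb\mu(\gamma))+\gamma$. I do not foresee any genuine obstacle; the only point requiring attention is that it is crucially the uniqueness (``if'') part of \eqref{eq:weighted_in_terms_of_schmimmerer} that rules out a plateau of $g$ at level $1$ extending beyond $\gamma_0$ and thereby upgrades the one-sided monotonicity argument into the desired equivalence.
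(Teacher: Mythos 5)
Your derivation is essentially the intended one: the corollary is meant to follow formally from the equality criterion \eqref{eq:weighted_in_terms_of_schmimmerer} combined with the monotonicity of $g(\gamma)=\bpsi_k(\La,\pmb\mu(\gamma))+\gamma$ (resp. $\apsi_k+\gamma$), and your handling of the finite case is correct, including the observation that the ``if'' half of \eqref{eq:weighted_in_terms_of_schmimmerer} is what excludes a plateau of $g$ at level $1$ beyond $\gamma_0$. The one genuine gap is that you silently assume $\gamma_0=\omega_{\pmb\sigma,\pmb\rho}^{(k)}(\Theta)$ (resp. $\hat\omega_{\pmb\sigma,\pmb\rho}^{(k)}(\Theta)$) is finite. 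These exponents can equal $+\infty$ (already for $m=n=1$ and a Liouville number; for the uniform exponents, e.g.\ $\Theta=0$ and $k\leq m$), and then your anchor $g(\gamma_0)=1$ does not exist. In that case the corollary demands $\bpsi_k(\La,\pmb\mu(\gamma))\leq1-\gamma$ for \emph{every} $\gamma$, while the purely formal content of Proposition \ref{prop:weighted_in_terms_of_schmimmerer} only yields that $g$ is non-decreasing and never attains the value $1$; a monotone function can jump over $1$, so the forward implication cannot be recovered from the statement of the proposition alone. This corner is not idle: the corollary is later invoked in the proof of Theorem \ref{t:omega_times_omega}, which explicitly covers the values $0$ and $+\infty$.

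The gap closes easily, but only by returning to the geometry you chose not to revisit. If $\omega_{\pmb\sigma,\pmb\rho}^{(k)}(\Theta)=+\infty$, then for any $\gamma_2>\max(\gamma,0)$ the parallelepiped $\cP(ds,\gamma_2)=e^{s(1-\gamma_2)}\cQ(s,\gamma_2)$ contains $k$ linearly independent points of $\La$ for arbitrarily large $s$ (Proposition \ref{prop:redefinition_weighted} and \eqref{eq:P_is_Q}), and the inclusion \eqref{eq:Q_is_contained_in_Q} with $\chi=1$ places those points inside $e^{s(1-\gamma)}\cQ(s,\gamma)$, so that $L_k(\La,s\pmb\mu(\gamma))\leq s(1-\gamma)$ infinitely often and hence $\bpsi_k(\La,\pmb\mu(\gamma))\leq1-\gamma$; the uniform statement is identical with ``for all $s$ large enough'', and negative $\gamma$ then follows from the monotonicity you already use. (Alternatively, one can note that $\gamma\mapsto\bpsi_k(\La,\pmb\mu(\gamma))$ is continuous, which also rules out a jump of $g$ over the value $1$.) With this supplement your argument is complete and coincides with the derivation the paper intends.
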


\subsection{Intermediate exponents of the second type}\label{sec:weighted_inter_exp_second_type}

The exponents $\omega_{\pmb\sigma,\pmb\rho}^{(k)}(\Theta)$ and $\hat\omega_{\pmb\sigma,\pmb\rho}^{(k)}(\Theta)$ are analogues in the classical setting of Schmidt--Summerer exponents of the first type. The point of view at all those exponents provided by Proposition \ref{prop:redefinition_weighted} proposes a natural analogue of Schmidt--Summerer exponents of the second type. Before giving the corresponding definition, let us introduce some notation.

Given $k\in\{1,\ldots,d\}$, let us consider the $k$-th external power $\bigwedge^k(\R^d)$. It is a $\binom{d}{k}$-dimensional space. We shall write its elements as $\vec Z=(Z_{i_1,\ldots,i_k})$, assuming that $i_1<\ldots<i_k$ and that $(i_1,\ldots,i_k)$ ranges through the set of all $k$-element subsets of $(1,\ldots,d)$.

Given a parallelepiped
\[
  \cP=\Big\{ \vec z=(z_1,\ldots,z_d)\in\R^d \ \Big|\ |z_i|\leq c_i,\ i=1,\ldots,d \Big\},
\]
let us denote
\begin{equation}\label{eq:P_k_definition}
  \cP^{[k]}=\Big\{ \vec Z=(Z_{i_1,\ldots,i_k})\in{\textstyle\bigwedge^k}(\R^d) \ \Big|\ |Z_{i_1,\ldots,i_k}|\leq\prod_{j=1}^{k} c_{i_j} \Big\}.
\end{equation}

Let also $\cP(s,\gamma)$, $\cQ(s,\gamma)$ be defined by \ref{eq:P_definition}, \ref{eq:Q_definition}, and let $\La=\La(\Theta)$.

\begin{definition}\label{def:intermediate_ordinary_weighted_second_type}
  Let $k$ be an integer, $1\leq k\leq d$.
  We define the $k$-th \emph{weighted Diophantine exponent} $\Omega_{\pmb\sigma,\pmb\rho}^{(k)}(\Theta)$ \emph{of the second type} as the supremum of real $\gamma$ such that $\cP^{[k]}(s,\gamma)$ contains a nonzero element of $\bigwedge^k(\La)$ for some arbitrarily large $s$.
\end{definition}

Clearly, $\Omega_{\pmb\sigma,\pmb\rho}^{(1)}(\Theta)=\omega_{\pmb\sigma,\pmb\rho}^{(1)}(\Theta)=\omega_{\pmb\sigma,\pmb\rho}(\Theta)$.

\begin{definition}\label{def:intermediate_uniform_weighted_second_type}
  Let $k$ be an integer, $1\leq k\leq d$.
  We define the $k$-th \emph{weighted uniform Diophantine exponent} $\hat\Omega_{\pmb\sigma,\pmb\rho}^{(k)}(\Theta)$ \emph{of the second type} as the supremum of real $\gamma$ such that $\cP^{[k]}(s,\gamma)$ contains a nonzero element of $\bigwedge^k(\La)$ for every $s$ large enough.
\end{definition}

Since $\det\big(\bigwedge^k(\La)\big)=1$ and $\vol\big(\cP^{[k]}(s,1)\big)=2^{\binom dk}$, it follows by Minkowski's convex body theorem that
\[
  \Omega_{\pmb\sigma,\pmb\rho}^{(k)}(\Theta)\geq\hat\Omega_{\pmb\sigma,\pmb\rho}^{(k)}(\Theta)\geq1.
\]

\begin{proposition}\label{prop:weighted_in_terms_of_schmimmerer_second_type}
  Given $\La=\La(\Theta)$, $\pmb\mu=\pmb\mu(\gamma)=\pmb\mu(\pmb\sigma,\pmb\rho,\gamma)$, and $k\in\{1,\ldots,d\}$, we have
  \begin{equation}\label{eq:weighted_in_terms_of_schmimmerer_second_type}
  \begin{array}{l}
    \Omega_{\pmb\sigma,\pmb\rho}^{(k)}(\Theta)=\gamma\
    \iff\
    \dfrac{\bPsi_k(\La,\pmb\mu(\gamma))}{k}=1-\gamma, \\
    \hat\Omega_{\pmb\sigma,\pmb\rho}^{(k)}(\Theta)=\gamma\
    \iff\
    \dfrac{\aPsi_k(\La,\pmb\mu(\gamma))}{k}=1-\gamma.
    \vphantom{1^{\big|}}
  \end{array}
  \end{equation}
  Besides that,
  $\dfrac{\bPsi_k(\La,\pmb\mu(\gamma))}{k}+\gamma$ and $\dfrac{\aPsi_k(\La,\pmb\mu(\gamma))}{k}+\gamma$
  are increasing as functions of $\gamma$.
\end{proposition}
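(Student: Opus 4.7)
The plan is to mirror the argument for the first-type exponents given in the proof of Proposition \ref{prop:weighted_in_terms_of_schmimmerer}, with one additional ingredient: Mahler's theorem on compound bodies, which allows us to translate the first minimum of $\cP^{[k]}$ (resp.\ $\cQ^{[k]}$) with respect to $\bigwedge^k(\La)$ into the sum $S_k$ of the first $k$ logarithmic successive minima of $\cP$ (resp.\ $\cQ$) with respect to $\La$. The starting point is the identity $\cP(ds,\gamma)=e^{s(1-\gamma)}\cQ(s,\gamma)$ already established there. Taking the $k$-th compound of both sides, and using that for an axis-aligned parallelepiped the compound body is an axis-aligned parallelepiped whose side-lengths are the $k$-fold products of the original side-lengths, one sees that multiplying every side of $\cP$ by the factor $e^{s(1-\gamma)}$ multiplies every side of $\cP^{[k]}$ by $e^{ks(1-\gamma)}$, whence
\[
  \cP^{[k]}(ds,\gamma)=e^{ks(1-\gamma)}\cQ^{[k]}(s,\gamma).
\]

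Next, by Mahler's theorem on compound bodies, with an implied constant depending only on $d$ and $k$,
\[
  \log\lambda_1\big(\cQ^{[k]}(s,\gamma),{\textstyle\bigwedge^k}(\La)\big)=\sum_{j=1}^{k}\log\lambda_j\big(\cQ(s,\gamma),\La\big)+O(1)=S_k(\La,s\pmb\mu)+O(1),
\]
so combining with the display above gives
\[
  \log\lambda_1\big(\cP^{[k]}(ds,\gamma),{\textstyle\bigwedge^k}(\La)\big)=-ks(1-\gamma)+S_k(\La,s\pmb\mu)+O(1).
\]
The body $\cP^{[k]}(ds,\gamma)$ contains a nonzero element of $\bigwedge^k(\La)$ precisely when its first minimum in that lattice is at most $1$, equivalently when $S_k(\La,s\pmb\mu)/s\leq k(1-\gamma)+O(1/s)$. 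Taking $\liminf_{s\to+\infty}$ (respectively $\limsup_{s\to+\infty}$, corresponding to the ``for every sufficiently large $s$'' variant) of this inequality and comparing with the definitions of $\Omega_{\pmb\sigma,\pmb\rho}^{(k)}$, $\hat\Omega_{\pmb\sigma,\pmb\rho}^{(k)}$, $\bPsi_k$, and $\aPsi_k$, one reads off the equivalences $\Omega_{\pmb\sigma,\pmb\rho}^{(k)}(\Theta)\geq\gamma\iff\bPsi_k(\La,\pmb\mu(\gamma))/k\leq 1-\gamma$ and $\hat\Omega_{\pmb\sigma,\pmb\rho}^{(k)}(\Theta)\geq\gamma\iff\aPsi_k(\La,\pmb\mu(\gamma))/k\leq 1-\gamma$. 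Passing to the supremum over $\gamma$ yields the equalities \eqref{eq:weighted_in_terms_of_schmimmerer_second_type}.

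For the monotonicity statement I would argue exactly as in Proposition \ref{prop:weighted_in_terms_of_schmimmerer}: for $0\leq\gamma_1\leq\gamma_2$ and $s>0$ one has $\cP(s,\gamma_2)\subset\cP(s,\gamma_1)$, hence $\cP^{[k]}(s,\gamma_2)\subset\cP^{[k]}(s,\gamma_1)$ and $\lambda_1(\cP^{[k]}(s,\gamma_1),\bigwedge^k(\La))\leq\lambda_1(\cP^{[k]}(s,\gamma_2),\bigwedge^k(\La))$, and the asymptotic identity above converts this into $\bPsi_k(\La,\pmb\mu(\gamma_1))/k+\gamma_1\leq\bPsi_k(\La,\pmb\mu(\gamma_2))/k+\gamma_2$, with the analogous inequality for $\aPsi_k$. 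The only non-routine step in the whole argument is the invocation of Mahler's theorem on compound bodies in the form $\lambda_1(\cQ^{[k]},\bigwedge^k(\La))\asymp\prod_{j=1}^k\lambda_j(\cQ,\La)$ with constants depending only on $d$ and $k$; once that is in place, the rest is compound-body bookkeeping together with monotonicity of first minima under inclusion.
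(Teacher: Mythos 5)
Your proposal is correct and follows essentially the same route as the paper: Mahler's compound-body theorem to identify $\bPsi_k,\aPsi_k$ with first minima of $\cQ^{[k]}$ in $\bigwedge^k(\La)$, the scaled identity $\cP^{[k]}(ds,\gamma)=e^{ks(1-\gamma)}\cQ^{[k]}(s,\gamma)$, and the inclusion $\cP^{[k]}(s,\gamma_2)\subset\cP^{[k]}(s,\gamma_1)$ for $0\leq\gamma_1\leq\gamma_2$ to get monotonicity. The only difference is presentational (you phrase the comparison via logarithms of first minima of the $\cP^{[k]}$ rather than the rescaled $\cQ^{[k]}$), which is equivalent.
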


\begin{proof}
  It follows from Mahler’s theory of compound bodies that
  \[
    \lambda_1\Big(\cQ^{[k]}(s,\gamma),{\textstyle\bigwedge^k}(\La)\Big)
    \asymp
    \prod_{i=1}^{k}\lambda_i\big(\cQ(s,\gamma),\La\big)
  \]
  with the implied constant depending only on d.

  Thus, the exponent $\bPsi_k(\La,\pmb\mu)$ (resp. $\aPsi_k(\La,\pmb\mu)$) equals the infimum of $\chi\in\R$ such that $e^{s\chi}\cQ^{[k]}(s,\gamma)$ contains a nonzero element of $\bigwedge^k(\La)$ for some arbitrarily large $s$ (resp. for every $s$ large enough).

  The rest of the argument is pretty much the same as in the proof of Proposition \ref{prop:weighted_in_terms_of_schmimmerer}. Instead of \eqref{eq:P_is_Q} we have
  \begin{equation*}
    \cP^{[k]}(ds,\gamma)=e^{sk(1-\gamma)}\cQ^{[k]}(s,\gamma),
  \end{equation*}
  and instead of \eqref{eq:Q_is_contained_in_Q} we have
  \begin{equation*}
    e^{sk(\chi-\gamma_2)}\cQ^{[k]}(s,\gamma_2)
    \subset
    e^{sk(\chi-\gamma_1)}\cQ^{[k]}(s,\gamma_1),
  \end{equation*}
  for every $\gamma_1,\gamma_2,\chi\in\R$, $0\leq\gamma_1\leq\gamma_2$.

  Hence \eqref{eq:weighted_in_terms_of_schmimmerer_second_type} and monotonicity follow.
\end{proof}

\begin{corollary}\label{cor:weighted_in_terms_of_schmimmerer_second_type}
  Within the hypothesis of Proposition \ref{prop:weighted_in_terms_of_schmimmerer_second_type}, we have
  \begin{equation*}
  \begin{array}{l}
    \Omega_{\pmb\sigma,\pmb\rho}^{(k)}(\Theta)\geq\gamma\
    \iff\
    \dfrac{\bPsi_k(\La,\pmb\mu(\gamma))}{k}\leq1-\gamma, \\
    \hat\Omega_{\pmb\sigma,\pmb\rho}^{(k)}(\Theta)\geq\gamma\
    \iff\
    \dfrac{\aPsi_k(\La,\pmb\mu(\gamma))}{k}\leq1-\gamma.
  \end{array}
  \end{equation*}
\end{corollary}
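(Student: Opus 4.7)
My plan is to deduce this corollary directly from Proposition \ref{prop:weighted_in_terms_of_schmimmerer_second_type}. The equivalence ``at the level of equalities'' together with the monotonicity statement in that proposition makes the corollary a purely formal rearrangement, so no additional geometric or arithmetic input is needed beyond what is already available.

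Concretely, set $\gamma_0 := \Omega_{\pmb\sigma,\pmb\rho}^{(k)}(\Theta)$ and write $f(\gamma) := \bPsi_k(\La,\pmb\mu(\gamma))/k + \gamma$. Proposition \ref{prop:weighted_in_terms_of_schmimmerer_second_type} asserts that $f$ is increasing in $\gamma$, and (taking $\gamma = \gamma_0$ in its equivalence) that $f(\gamma_0) = 1$. From here I would produce the chain
\[
  \Omega_{\pmb\sigma,\pmb\rho}^{(k)}(\Theta) \geq \gamma
  \iff \gamma \leq \gamma_0
  \iff f(\gamma) \leq f(\gamma_0) = 1
  \iff \frac{\bPsi_k(\La,\pmb\mu(\gamma))}{k} \leq 1 - \gamma,
\]
in which the middle equivalence uses monotonicity of $f$: the implication $\gamma \leq \gamma_0 \Rightarrow f(\gamma) \leq f(\gamma_0)$ is weak monotonicity, while the reverse implication follows by contrapositive combined with the fact, also supplied by Proposition \ref{prop:weighted_in_terms_of_schmimmerer_second_type}, that $f(\gamma) = 1$ forces $\gamma = \gamma_0$. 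The second equivalence of the corollary follows verbatim with $\aPsi_k$ in place of $\bPsi_k$ and $\hat\Omega_{\pmb\sigma,\pmb\rho}^{(k)}$ in place of $\Omega_{\pmb\sigma,\pmb\rho}^{(k)}$.

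I do not anticipate any substantive obstacle; the whole argument is a one-variable monotonicity manipulation. The only mild point worth mentioning is the edge case in which $\Omega_{\pmb\sigma,\pmb\rho}^{(k)}(\Theta)$ is infinite, where ``$\gamma_0$'' should be interpreted in the extended reals. In that regime the forward direction of the first equivalence becomes: for every finite $\gamma$ one must have $\bPsi_k(\La,\pmb\mu(\gamma))/k \leq 1-\gamma$, which follows from the proof of Proposition \ref{prop:weighted_in_terms_of_schmimmerer_second_type} by observing that $\cP^{[k]}(s,\gamma)$ then contains nonzero elements of $\bigwedge^k(\La)$ for arbitrarily large $s$ for every such $\gamma$, so the conclusion of the corollary still holds without any modification.
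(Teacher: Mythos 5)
Your argument is correct and is exactly the derivation the paper intends: the corollary is stated without proof as an immediate consequence of Proposition \ref{prop:weighted_in_terms_of_schmimmerer_second_type}, namely the equality equivalence combined with the monotonicity of $\bPsi_k(\La,\pmb\mu(\gamma))/k+\gamma$ and $\aPsi_k(\La,\pmb\mu(\gamma))/k+\gamma$, which is precisely your one-variable rearrangement. Your separate treatment of the case $\Omega_{\pmb\sigma,\pmb\rho}^{(k)}(\Theta)=+\infty$ via the definitions and the inclusion of the parallelepipeds is a sensible way to cover the edge case the paper leaves implicit.
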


\subsection{Dual problem}\label{sec:weighted_dual}

Along with the system \eqref{eq:system_with_weights}, i.e. the system
\begin{equation*}
  \begin{cases}
    |\vec x|_{\pmb\sigma}\leq t \\
    |\Theta\vec x-\vec y|_{\pmb\rho}\leq t^{-\gamma}
  \end{cases},
\end{equation*}
let us consider the dual system
\begin{equation}\label{eq:dual_system_with_weights}
  \begin{cases}
    |\vec y|_{\pmb\rho}\leq t \\
    |\tr\Theta\vec y-\vec x|_{\pmb\sigma}\leq t^{-\delta}
  \end{cases}.
\end{equation}
Given $k\in\{1,\ldots,d\}$, the exponent $\omega^{(k)}_{\pmb\rho,\pmb\sigma}(\tr\Theta)$ (resp. $\hat\omega^{(k)}_{\pmb\rho,\pmb\sigma}(\tr\Theta)$) is defined by Definition \ref{def:intermediate_ordinary_weighted} (resp. \ref{def:intermediate_uniform_weighted}) as the supremum of real $\delta$ such that the system \eqref{eq:dual_system_with_weights} admits $k$ linearly independent solutions in $(\vec x,\vec y)\in\Z^{m+n}$ for some arbitrarily large $t$ (resp. for every $t$ large enough).

The previous Section already provides an interpretation of the exponents $\omega^{(k)}_{\pmb\rho,\pmb\sigma}(\tr\Theta)$, $\hat\omega^{(k)}_{\pmb\rho,\pmb\sigma}(\tr\Theta)$. It suffices to swap the triple $(\Theta,\pmb\sigma,\pmb\rho)$ for $(\tr\Theta,\pmb\rho,\pmb\sigma)$.
However, in the context of the transference phenomenon, it is very useful to give another interpretation involving the dual lattice for $\La(\Theta)$ and preserving the subspace generated by $\vec e_1$ and $\vec e_2$. So, let us consider the dual lattice
\[
  \La^\ast=\La^\ast(\Theta)=
  \begin{pmatrix}
    \vec I_m & \tr\Theta \\
    & \vec I_n
  \end{pmatrix}
  \Z^d
\]
and set for each $\delta\in\R$
\begin{equation}\label{eq:pmb_mu_ast}
  \pmb\mu^\ast=\pmb\mu^\ast(\pmb\sigma,\pmb\rho,\delta)=\delta\vec e_1-\vec e_2,
\end{equation}
where $\vec e_1$ and $\vec e_2$ are defined by \eqref{eq:e1_e2_definition}.

\begin{proposition}\label{prop:dual_weighted_in_terms_of_schmimmerer}
  Given $\La^\ast=\La^\ast(\Theta)$, $\pmb\mu^\ast=\pmb\mu^\ast(\delta)=\pmb\mu^\ast(\pmb\sigma,\pmb\rho,\delta)$, and $k\in\{1,\ldots,d\}$, we have
  \begin{equation}\label{eq:dual_weighted_in_terms_of_schmimmerer}
  \begin{array}{l}
    \omega_{\pmb\rho,\pmb\sigma}^{(k)}(\tr\Theta)=\delta\
    \iff\
    \bpsi_k(\La^\ast,\pmb\mu^\ast(\delta))=1-\delta, \\
    \hat\omega_{\pmb\rho,\pmb\sigma}^{(k)}(\tr\Theta)=\delta\
    \iff\
    \apsi_k(\La^\ast,\pmb\mu^\ast(\delta))=1-\delta.
    \vphantom{1^{\big|}}
  \end{array}
  \end{equation}
  Besides that,
  $\bpsi_k(\La^\ast,\pmb\mu^\ast(\delta))+\delta$ and $\apsi_k(\La^\ast,\pmb\mu^\ast(\delta))+\delta$
  are increasing as functions of $\delta$.
\end{proposition}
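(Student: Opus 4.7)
The plan is to mirror the proof of Proposition \ref{prop:weighted_in_terms_of_schmimmerer}, replacing $(\Theta,\pmb\sigma,\pmb\rho,\gamma)$ with $(\tr\Theta,\pmb\rho,\pmb\sigma,\delta)$ at the Diophantine level while anchoring the parametric picture at the dual lattice $\La^\ast=\La^\ast(\Theta)$ and at the same two-dimensional subspace $\spanned(\vec e_1,\vec e_2)$. The key observation is that a nonzero integer solution $(\vec x,\vec y)\in\Z^{m+n}$ of the dual system \eqref{eq:dual_system_with_weights} is in bijective, linear-independence-preserving correspondence with the lattice point $(\tr\Theta\vec y-\vec x,\vec y)\in\La^\ast$, as dictated by the basis matrix $\begin{pmatrix}\vec I_m & \tr\Theta \\ & \vec I_n\end{pmatrix}$ of $\La^\ast$.

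First I would introduce the dual parallelepiped
\[\cP^\ast(s,\delta)=\Big\{\vec w\in\R^d\ \Big|\ |w_j|\leq e^{-s\sigma_j\delta},\ j=1,\ldots,m;\ |w_{d+1-i}|\leq e^{s\rho_i},\ i=1,\ldots,n\Big\},\]
the natural counterpart of $\cP(s,\gamma)$ for the dual setup. Under the correspondence above, integer solutions of \eqref{eq:dual_system_with_weights} with $t=e^s$ are precisely the points of $\La^\ast\cap\cP^\ast(s,\delta)$. Thus, by Definitions \ref{def:intermediate_ordinary_weighted} and \ref{def:intermediate_uniform_weighted} applied to $(\tr\Theta,\pmb\rho,\pmb\sigma)$, we obtain a dual analogue of Proposition \ref{prop:redefinition_weighted}: $\omega^{(k)}_{\pmb\rho,\pmb\sigma}(\tr\Theta)$ (resp.\ $\hat\omega^{(k)}_{\pmb\rho,\pmb\sigma}(\tr\Theta)$) equals the supremum of $\delta$ for which $\cP^\ast(s,\delta)$ contains $k$ linearly independent points of $\La^\ast$ for some arbitrarily large (resp.\ every sufficiently large) $s$.

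Next, a direct calculation from \eqref{eq:e1_e2_definition} and \eqref{eq:pmb_mu_ast} gives the components of $\pmb\mu^\ast(\delta)$: $\mu^\ast_j=-d\sigma_j\delta+\delta-1$ for $j=1,\ldots,m$, and $\mu^\ast_{d+1-i}=d\rho_i+\delta-1$ for $i=1,\ldots,n$. Substituting into $\cQ(s,\pmb\mu^\ast(\delta))=\{\vec w:|w_k|\leq e^{s\mu^\ast_k}\}$ yields the identity
\[\cP^\ast(ds,\delta)=e^{s(1-\delta)}\cQ(s,\pmb\mu^\ast(\delta)),\]
which is the dual counterpart of \eqref{eq:P_is_Q}. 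With this in hand, the equivalences \eqref{eq:dual_weighted_in_terms_of_schmimmerer} follow verbatim from the argument in the proof of Proposition \ref{prop:weighted_in_terms_of_schmimmerer}, once monotonicity is established.

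Finally, for the monotonicity of $\bpsi_k(\La^\ast,\pmb\mu^\ast(\delta))+\delta$ and $\apsi_k(\La^\ast,\pmb\mu^\ast(\delta))+\delta$, I would observe that for $\delta_1\leq\delta_2$ and $s\geq0$ the inclusion $\cP^\ast(ds,\delta_2)\subset\cP^\ast(ds,\delta_1)$ holds (the $\pmb\sigma$-bounds shrink while the $\pmb\rho$-bounds are unaffected), which by the identity above gives $e^{s(\chi-\delta_2)}\cQ(s,\pmb\mu^\ast(\delta_2))\subset e^{s(\chi-\delta_1)}\cQ(s,\pmb\mu^\ast(\delta_1))$ in exact analogy with \eqref{eq:Q_is_contained_in_Q}, from which the stated monotonicity is immediate. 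I do not expect any real obstacle; the only point requiring care is the bookkeeping of coordinate conventions so that $\cP^\ast$ is correctly aligned both with the basis of $\La^\ast$ and with the subspace $\spanned(\vec e_1,\vec e_2)$, which is precisely what makes $\vec e_1,\vec e_2$ the natural generators of that subspace from the transference viewpoint.
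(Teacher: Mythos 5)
Your proposal is correct and follows essentially the same route as the paper, which proves this proposition ``mutatis mutandis'' from Proposition \ref{prop:weighted_in_terms_of_schmimmerer} using exactly the same parallelepipeds $\cP^\ast(s,\delta)$, $\cQ^\ast(s,\delta)$, the same components $\mu^\ast_j=-d\sigma_j\delta+\delta-1$, $\mu^\ast_{d+1-i}=d\rho_i+\delta-1$, the identity $\cP^\ast(ds,\delta)=e^{s(1-\delta)}\cQ^\ast(s,\delta)$, and the inclusion giving monotonicity. Your explicit check that solutions of \eqref{eq:dual_system_with_weights} correspond to points of $\La^\ast$ in $\cP^\ast(s,\delta)$ is a detail the paper leaves implicit, but it is the same argument.
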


\begin{proof}
  The proof repeats, mutatis mutandis, that of Proposition \ref{prop:weighted_in_terms_of_schmimmerer}. We should replace $\pmb\mu(\gamma)$ with $\pmb\mu^\ast(\delta)$,
  \begin{equation*}
    \begin{array}{ll}
      \mu^\ast_j=-d\sigma_j\delta+\delta-1, & j=1,\ldots,m, \\
      \mu^\ast_{d+1-i}=d\rho_i+\delta-1, & \,i=1,\ldots,n,
    \end{array}
  \end{equation*}
  and consider, instead of $\cP(s,\gamma)$, $\cQ(s,\gamma)$, the parallelepipeds
  \[
    \cP^\ast(s,\delta)=
    \bigg\{ \vec z=(z_1,\ldots,z_d)\in\R^d \
    \bigg|
    \begin{array}{ll}
      |z_j|\leq e^{-s\sigma_j\delta}, & j=1,\ldots,m \\
      |z_{d+1-i}|\leq e^{s\rho_i}, & \,i=1,\ldots,n
    \end{array}
    \bigg\},
  \]
  \[
    \cQ^\ast(s,\delta)=
    \Big\{ \vec z=(z_1,\ldots,z_d)\in\R^d \ \Big|\ |z_k|\leq e^{s\mu^\ast_k},\ k=1,\ldots,d \Big\}.
    \qquad\ \
  \]
  Then, same as \eqref{eq:P_is_Q} and \eqref{eq:Q_is_contained_in_Q}, we get
  \begin{equation*}
    \cP^\ast(ds,\delta)=e^{s(1-\delta)}\cQ^\ast(s,\delta)
  \end{equation*}
  and
  \begin{equation*}
    e^{s(\chi-\delta_2)}\cQ^\ast(s,\delta_2)
    \subset
    e^{s(\chi-\delta_1)}\cQ^\ast(s,\delta_1),
  \end{equation*}
  for each $\delta_1,\delta_2,\chi\in\R$, $0\leq\delta_1\leq\delta_2$.
  Hence \eqref{eq:dual_weighted_in_terms_of_schmimmerer} and monotonicity follow.
\end{proof}

\begin{proposition}\label{prop:dual_weighted_in_terms_of_schmimmerer_second_type}
  Given $\La^\ast=\La^\ast(\Theta)$, $\pmb\mu^\ast=\pmb\mu^\ast(\delta)=\pmb\mu^\ast(\pmb\sigma,\pmb\rho,\delta)$, and $k\in\{1,\ldots,d\}$, we have
  \begin{equation}\label{eq:dual_weighted_in_terms_of_schmimmerer_second_type}
  \begin{array}{l}
    \Omega_{\pmb\rho,\pmb\sigma}^{(k)}(\tr\Theta)=\delta\
    \iff\
    \dfrac{\bPsi_k(\La^\ast,\pmb\mu^\ast(\delta))}{k}=1-\delta, \\
    \hat\Omega_{\pmb\rho,\pmb\sigma}^{(k)}(\tr\Theta)=\delta\
    \iff\
    \dfrac{\aPsi_k(\La^\ast,\pmb\mu^\ast(\delta))}{k}=1-\delta.
    \vphantom{1^{\big|}}
  \end{array}
  \end{equation}
  Besides that,
  $\dfrac{\bPsi_k(\La^\ast,\pmb\mu^\ast(\delta))}{k}+\delta$ and $\dfrac{\aPsi_k(\La^\ast,\pmb\mu^\ast(\delta))}{k}+\delta$
  are increasing as functions of $\delta$.
\end{proposition}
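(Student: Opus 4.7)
The plan is to combine the argument of Proposition \ref{prop:weighted_in_terms_of_schmimmerer_second_type} (which handles the compound-body/second-type exponent reformulation) with the argument of Proposition \ref{prop:dual_weighted_in_terms_of_schmimmerer} (which handles the swap $\Theta\leftrightarrow\tr\Theta$, $\pmb\sigma\leftrightarrow\pmb\rho$, $\gamma\leftrightarrow\delta$ on the dual lattice). Both are essentially mutatis mutandis adaptations, so the proof is a direct assembly of those two ingredients.

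First I would set up the dual parallelepipeds $\cP^\ast(s,\delta)$ and $\cQ^\ast(s,\delta)$ exactly as in the proof of Proposition \ref{prop:dual_weighted_in_terms_of_schmimmerer}, and observe that Definitions \ref{def:intermediate_ordinary_weighted_second_type} and \ref{def:intermediate_uniform_weighted_second_type} applied to $(\tr\Theta,\pmb\rho,\pmb\sigma)$ state that $\Omega^{(k)}_{\pmb\rho,\pmb\sigma}(\tr\Theta)$ (resp.\ $\hat\Omega^{(k)}_{\pmb\rho,\pmb\sigma}(\tr\Theta)$) is the supremum of $\delta$ for which $\cP^{\ast[k]}(s,\delta)$ contains a nonzero element of $\bigwedge^k(\La^\ast)$ for some arbitrarily large $s$ (resp.\ for all $s$ large enough). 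Here one checks that the lattice attached to $\tr\Theta$ in the sense of Section \ref{sec:weighted_inter_exp} coincides, up to a permutation of coordinates irrelevant for successive minima, with $\La^\ast(\Theta)$.

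Next, using Mahler's theory of compound bodies in the same way as in Proposition \ref{prop:weighted_in_terms_of_schmimmerer_second_type}, one has
\[
  \lambda_1\!\Big(\cQ^{\ast[k]}(s,\delta),{\textstyle\bigwedge^k}(\La^\ast)\Big)
  \asymp
  \prod_{i=1}^{k}\lambda_i\!\big(\cQ^\ast(s,\delta),\La^\ast\big),
\]
with the implied constants depending only on $d$. Taking logarithms and dividing by $s$, the definition of $\bPsi_k,\aPsi_k$ yields that $\bPsi_k(\La^\ast,\pmb\mu^\ast)$ (resp.\ $\aPsi_k(\La^\ast,\pmb\mu^\ast)$) equals the infimum of $\chi$ such that $e^{s\chi}\cQ^{\ast[k]}(s,\delta)$ contains a nonzero element of $\bigwedge^k(\La^\ast)$ for some arbitrarily large $s$ (resp.\ for every $s$ large enough).

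The remaining steps are then the dual analogues of the relations used in Proposition \ref{prop:weighted_in_terms_of_schmimmerer_second_type}. From the explicit form of $\pmb\mu^\ast$ given in the proof of Proposition \ref{prop:dual_weighted_in_terms_of_schmimmerer} one directly verifies
\[
  \cP^{\ast[k]}(ds,\delta)=e^{sk(1-\delta)}\cQ^{\ast[k]}(s,\delta),
\]
which, combined with the reformulations of both kinds of exponents, yields the equivalences \eqref{eq:dual_weighted_in_terms_of_schmimmerer_second_type}. For monotonicity, one checks
\[
  e^{sk(\chi-\delta_2)}\cQ^{\ast[k]}(s,\delta_2)
  \subset
  e^{sk(\chi-\delta_1)}\cQ^{\ast[k]}(s,\delta_1)
  \qquad (0\leq\delta_1\leq\delta_2),
\]
which follows from the corresponding inclusion for the original (non-compound) bodies $\cP^\ast(ds,\delta_2)\subset\cP^\ast(ds,\delta_1)$ established in the proof of Proposition \ref{prop:dual_weighted_in_terms_of_schmimmerer}, since passing to compound bodies preserves inclusions and scales homogeneously by the $k$-th power. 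This inclusion forces $\bPsi_k/k+\delta$ and $\aPsi_k/k+\delta$ to be non-decreasing in $\delta$.

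I do not expect any genuine obstacle: every ingredient has already been carried out in the two preceding propositions. The only points needing minor care are the identification of the compound body arising from $\tr\Theta$ with $\bigwedge^k(\La^\ast(\Theta))$ (a bookkeeping exercise in coordinates), and the verification of the two displayed identities for $\cP^{\ast[k]}$ and $\cQ^{\ast[k]}$, which reduce immediately to the analogous identities for $\cP^\ast$ and $\cQ^\ast$ raised to the $k$-th exterior power.
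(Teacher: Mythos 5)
Your proposal is correct and coincides with the paper's intended argument: the paper proves this proposition in a single sentence by exactly the combination you describe, namely modifying the proof of Proposition \ref{prop:weighted_in_terms_of_schmimmerer_second_type} (Mahler compound bodies, the identity $\cP^{[k]}(ds,\cdot)=e^{sk(1-\cdot)}\cQ^{[k]}(s,\cdot)$ and the nesting of the rescaled bodies) in the same way that the proof of Proposition \ref{prop:dual_weighted_in_terms_of_schmimmerer} modifies that of Proposition \ref{prop:weighted_in_terms_of_schmimmerer} (replacing $\pmb\mu(\gamma)$, $\cP$, $\cQ$ by $\pmb\mu^\ast(\delta)$, $\cP^\ast$, $\cQ^\ast$ on the dual lattice). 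Your explicit displayed identities and the monotonicity inclusion are precisely the dual analogues the paper has in mind, so there is nothing to add.
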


The proof of Proposition \ref{prop:dual_weighted_in_terms_of_schmimmerer_second_type} is obtained by changing the proof of Proposition \ref{prop:weighted_in_terms_of_schmimmerer_second_type} in the very same way we obtained the proof of Proposition \ref{prop:dual_weighted_in_terms_of_schmimmerer} from that of Proposition \ref{prop:weighted_in_terms_of_schmimmerer}.

\subsection{Application of general theory and splitting Theorem \ref{t:weighted_Dyson}}\label{sec:application_weighted}

Let us apply Proposition \ref{prop:essence_of_transference_split_up} and Corollary \ref{cor:essence_of_transference} for $\La=\La(\Theta)$, $\pmb\tau=s\pmb\mu$. Dividing every relation thus obtained by $s$ and sending $s$ to $+\infty$, we get the following statements on the Schmidt--Summerer exponents.

\begin{proposition} \label{prop:essence_of_transference_split_up_weighted}
  Given an arbitrary $\gamma\in\R$, let $\La=\La(\Theta)$, $\pmb\mu=\pmb\mu(\pmb\sigma,\pmb\rho,\gamma)$. Then

  \textup{(i)}
  $\bpsi_k(\La,\pmb\mu)=-\apsi_{d+1-k}(\La^\ast,-\pmb\mu)$, \vphantom{$\frac{\big|}{|}$}

  \phantom{\textup{(i)}}
  $\apsi_k(\La,\pmb\mu)=-\bpsi_{d+1-k}(\La^\ast,-\pmb\mu);\ \ k=1,\ldots,d$; \vphantom{$\Big|$}

  \textup{(ii)}
  $\bPsi_k(\La,\pmb\mu)=\bPsi_{d-k}(\La^\ast,-\pmb\mu)$, \vphantom{$\frac{\big|}{|}$}

  \phantom{\textup{(ii)}}
  $\aPsi_k(\La,\pmb\mu)=\aPsi_{d-k}(\La^\ast,-\pmb\mu),\ \ k=1,\ldots,d-1$; \vphantom{$\Big|$}

  \textup{(iii)}
  $
    \displaystyle
    \bPsi_1(\La,\pmb\mu)
    \leq\ldots\leq
    \frac{\bPsi_k(\La,\pmb\mu)}{k}
    \leq\ldots\leq
    \frac{\bPsi_{d-1}(\La,\pmb\mu)}{d-1}
    \leq
    \frac{\bPsi_d(\La,\pmb\mu)}{d}
    =0
  $,

  \phantom{\textup{(iii)}}
  $
    \displaystyle
    \aPsi_1(\La,\pmb\mu)
    \leq\ldots\leq
    \frac{\aPsi_k(\La,\pmb\mu)}{k}
    \leq\ldots\leq
    \frac{\aPsi_{d-1}(\La,\pmb\mu)}{d-1}
    \leq
    \frac{\aPsi_d(\La,\pmb\mu)}{d}
    =0
  $;
  \vphantom{$\bigg|$}

  \textup{(iv)}
  $
    \displaystyle
    \frac{\bPsi_1(\La,\pmb\mu)}{d-1}
    \geq\ldots\geq
    \frac{\bPsi_k(\La,\pmb\mu)}{d-k}
    \geq\ldots\geq
    \bPsi_{d-1}(\La,\pmb\mu)
  $,
  \vphantom{$\bigg|$}

  \phantom{\textup{(iv)}}
  $
    \displaystyle
    \frac{\aPsi_1(\La,\pmb\mu)}{d-1}
    \geq\ldots\geq
    \frac{\aPsi_k(\La,\pmb\mu)}{d-k}
    \geq\ldots\geq
    \aPsi_{d-1}(\La,\pmb\mu)
  $.
\end{proposition}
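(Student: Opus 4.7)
The plan is to substitute $\pmb\tau=s\pmb\mu$ into Proposition \ref{prop:essence_of_transference_split_up}, divide every resulting relation by $s>0$, and then pass to $\liminf$ and $\limsup$ as $s\to+\infty$. Since every implied constant in Proposition \ref{prop:essence_of_transference_split_up} depends only on $d$, after division by $s$ the $O(1)$ terms become $o(1)$ and vanish in the limit. Thus the whole statement reduces to reading off four bookkeeping consequences of the finitary relations for $L_k$ and $S_k$.

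For part (i), I would start from
\[
  L_k(\La,s\pmb\mu)=-L_{d+1-k}(\La^\ast,-s\pmb\mu)+O(1),
\]
divide by $s$, and use that $-s\pmb\mu=s(-\pmb\mu)$, so that $\liminf$/$\limsup$ on the right computes $\bpsi_{d+1-k}(\La^\ast,-\pmb\mu)$ and $\apsi_{d+1-k}(\La^\ast,-\pmb\mu)$ respectively. The standard identities $\liminf(-f)=-\limsup f$ and $\limsup(-f)=-\liminf f$ then produce the cross-pairing $\bpsi\leftrightarrow-\apsi$, $\apsi\leftrightarrow-\bpsi$ on the dual side. Part (ii) is the same argument applied to the identity $S_k(\La,s\pmb\mu)=S_{d-k}(\La^\ast,-s\pmb\mu)+O(1)$; here there is no minus sign in front of the dual term, so $\bPsi$ pairs with $\bPsi$ and $\aPsi$ with $\aPsi$.

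For parts (iii) and (iv), the input is a chain of weak inequalities between the quantities $S_k(\La,s\pmb\mu)/k$ (respectively $S_k(\La,s\pmb\mu)/(d-k)$). Since both $\liminf$ and $\limsup$ are monotone under pointwise $\leq$, every inequality in the chain passes simultaneously to $\bPsi$ and to $\aPsi$, yielding the two parallel chains announced. The boundary equality $\bPsi_d(\La,\pmb\mu)/d=\aPsi_d(\La,\pmb\mu)/d=0$ comes from the two-sided bound $-\log d!\leq S_d(\La,s\pmb\mu)\leq0$ of Proposition \ref{prop:properties_of_S_k}(i), which forces $S_d(\La,s\pmb\mu)/s\to0$.

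There is no real obstacle: the only step requiring attention is the sign bookkeeping in (i), where one must correctly match $\liminf$ on one side with $\limsup$ on the other after the sign flip, and keep track of the argument $-\pmb\mu$ (as opposed to $\pmb\mu$) on the dual lattice. Once that is done, the proposition follows immediately from Proposition \ref{prop:essence_of_transference_split_up}.
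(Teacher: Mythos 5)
Your proposal is correct and is essentially the paper's own argument: the paper likewise obtains this proposition by setting $\pmb\tau=s\pmb\mu$ in Proposition \ref{prop:essence_of_transference_split_up}, dividing by $s$, and letting $s\to+\infty$, with the same $\liminf$/$\limsup$ sign bookkeeping in (i) and the same monotonicity and $S_d=O(1)$ observations for (ii)--(iv).
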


\begin{corollary} \label{cor:essence_of_transference_weighted}
  $
    \bPsi_1(\La,\pmb\mu)
    \leq
    \dfrac{\bPsi_1(\La^\ast,-\pmb\mu)}{d-1}
  $.
\end{corollary}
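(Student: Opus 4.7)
The proof is a direct limiting argument from Corollary \ref{cor:essence_of_transference}, which asserts
\[
  S_1(\La,\pmb\tau)\leq\dfrac{S_1(\La^\ast,-\pmb\tau)}{d-1}+O(1),
\]
with the implied constant depending only on $d$. My plan is to specialise $\pmb\tau=s\pmb\mu$ for $s>0$, rescale, and pass to $\liminf$ as $s\to+\infty$.

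First, I would observe that for every $s>0$ the inequality of Corollary \ref{cor:essence_of_transference} applied to $\pmb\tau=s\pmb\mu\in\cT$ (note $\pmb\mu\in\cT$ by construction, since $\vec e_1,\vec e_2\in\cT$) gives
\[
  S_1(\La,s\pmb\mu)\leq\dfrac{S_1(\La^\ast,-s\pmb\mu)}{d-1}+C,
\]
where $C=C(d)$ is an absolute constant. Dividing by $s$, this reads
\[
  \frac{S_1(\La,s\pmb\mu)}{s}\leq\frac{1}{d-1}\cdot\frac{S_1(\La^\ast,-s\pmb\mu)}{s}+\frac{C}{s}.
\]

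Next I would take $\liminf_{s\to+\infty}$ on both sides. The left-hand side is, by definition, $\bPsi_1(\La,\pmb\mu)$. On the right-hand side, the $C/s$ term tends to $0$ and is therefore absorbed by the $\liminf$; since $-s\pmb\mu=s(-\pmb\mu)$ and $-\pmb\mu\in\cT$, the remaining quantity gives $\dfrac{\bPsi_1(\La^\ast,-\pmb\mu)}{d-1}$. This yields the desired inequality.

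There is essentially no obstacle: the only minor point to check is the standard fact that if $f(s)\leq g(s)+o(1)$ as $s\to+\infty$, then $\liminf f(s)\leq\liminf g(s)$, which is immediate. All the real work has already been done in Proposition \ref{prop:essence_of_transference_split_up} and its Corollary \ref{cor:essence_of_transference}; this corollary is simply their asymptotic reformulation in the language of Schmidt--Summerer exponents introduced in Definition \ref{def:psi_weighted}.
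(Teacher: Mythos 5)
Your argument is correct and is exactly the paper's own: the paper derives this corollary (together with Proposition \ref{prop:essence_of_transference_split_up_weighted}) by applying Corollary \ref{cor:essence_of_transference} with $\pmb\tau=s\pmb\mu$, dividing by $s$, and letting $s\to+\infty$, which is precisely your limiting argument. The small points you flag (that $\pmb\mu,-\pmb\mu\in\cT$ and that the $O(1)/s$ term is absorbed in the $\liminf$) are handled correctly.
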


We claim that Corollary \ref{cor:essence_of_transference_weighted} implies Theorem \ref{t:weighted_Dyson}. To show this, let us set for each $\delta\geq0$
\begin{equation}\label{eq:gamma_of_delta}
  \gamma_\delta=
  \begin{cases}
    \dfrac{\big(\sigma_m^{-1}-1\big)+\rho_n^{-1}\delta}
          {\sigma_m^{-1}+\big(\rho_n^{-1}-1\big)\delta}\,, & \mbox{if } \delta\geq1, \\
    \dfrac{\big(\sigma_1^{-1}-1\big)+\rho_1^{-1}\delta\vphantom{1^{\big|}}}
          {\sigma_1^{-1}+\big(\rho_1^{-1}-1\big)\delta}\,, & \mbox{if } \delta\leq1.
  \end{cases}
\end{equation}
It is easy to see that the function $\delta\mapsto\gamma_\delta$ monotonously maps $[0,+\infty]$ onto the segment $[1-\sigma_1,(1-\rho_n)^{-1}]$, and that $\delta=1$ if and only if $\gamma_\delta=1$ (see Figure \ref{fig:gamma_delta}). We naturally assume that $(1-\rho_n)^{-1}=+\infty$ in case $\rho_n=1$ (which holds if and only if $n=1$).

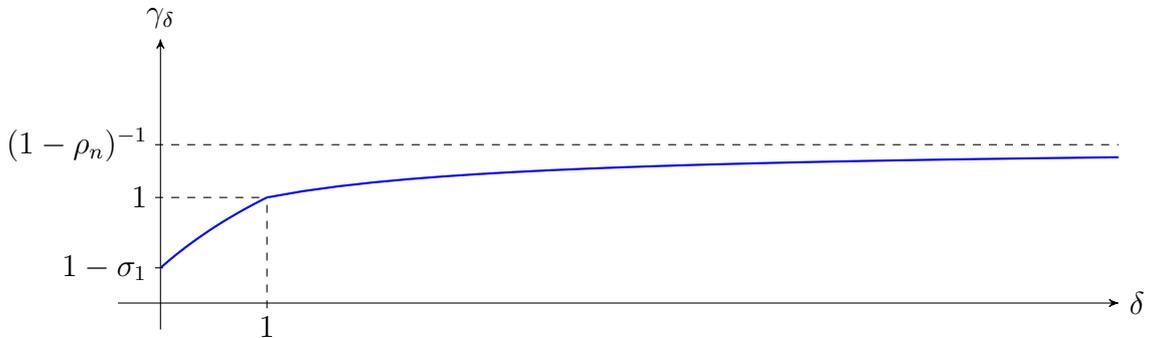
\begin{figure}[h]
\newcommand{\sigmamax}{(2/3)}
\newcommand{\sigmamin}{(1/3)}
\newcommand{\rhomax}{(2/3)}
\newcommand{\rhomin}{(1/3)}
\centering
\begin{tikzpicture}[scale=1.4]
  \draw[->,>=stealth'] (-0.4,0) -- (9,0) node[right] {$\delta$};
  \draw[->,>=stealth'] (0,-0.25) -- (0,2.5) node[above] {$\gamma_\delta$};

  \draw[color=blue,thick] plot[domain=0:1] (\x, {(\sigmamax^(-1)-1+\rhomax^(-1)*\x)/(\sigmamax^(-1)+(\rhomax^(-1)-1)*\x)});
  \draw[color=blue,thick] plot[domain=1:9] (\x, {(\sigmamin^(-1)-1+\rhomin^(-1)*\x)/(\sigmamin^(-1)+(\rhomin^(-1)-1)*\x)});

  \draw[color=black,dashed] (1,-0.05) -- (1,1);
  \draw[color=black,dashed] (-0.05,1) -- (1,1);
  \draw[color=black,dashed] (-0.05,{(1-\rhomin)^(-1)}) -- (9,{(1-\rhomin)^(-1)});
  \draw[color=black,dashed] (-0.05,{1-\sigmamax}) -- (0.05,{1-\sigmamax});

  \draw (1,-0.02) node[below]{$1$};
  \draw (-0.02,1) node[left]{$1$};
  \draw (-0.02,{(1-\rhomin)^(-1)}) node[left]{$(1-\rho_n)^{-1}$};
  \draw (-0.02,{1-\sigmamax}) node[left]{$1-\sigma_1$};
\end{tikzpicture}
\caption{Graph of $\gamma_\delta$} \label{fig:gamma_delta}
\end{figure}

The inverse of \eqref{eq:gamma_of_delta} is
\begin{equation}\label{eq:delta_of_gamma}
  \hskip5.6mm
  \delta=
  \begin{cases}
    \dfrac{\big(1-\sigma_m^{-1}\big)+\sigma_m^{-1}\gamma_\delta}
          {\rho_n^{-1}+\big(1-\rho_n^{-1}\big)\gamma_\delta}\,, & \mbox{if } \gamma_\delta\geq1, \\
    \dfrac{\big(1-\sigma_1^{-1}\big)+\sigma_1^{-1}\gamma_\delta\vphantom{1^{\big|}}}
          {\rho_1^{-1}+\big(1-\rho_1^{-1}\big)\gamma_\delta}\,, & \mbox{if } \gamma_\delta\leq1.
  \end{cases}
\end{equation}

\begin{lemma}\label{l:immersion}
  For each $k\in\{1,\ldots,d\}$ and every $\delta\geq0$ we have
  \begin{equation*}
  \begin{array}{l}
    \bpsi_k\big(\La^\ast,\pmb\mu^\ast(\delta)\big)
    \leq 1-\delta
    \implies
    \bpsi_k\big(\La^\ast,-\pmb\mu(\gamma_\delta)\big)
    \leq (d-1)(1-\gamma_\delta), \\
    \apsi_k\big(\La^\ast,\pmb\mu^\ast(\delta)\big)
    \leq 1-\delta
    \implies
    \apsi_k\big(\La^\ast,-\pmb\mu(\gamma_\delta)\big)
    \leq (d-1)(1-\gamma_\delta).
    \vphantom{1^{\big|}}
  \end{array}
  \end{equation*}
\end{lemma}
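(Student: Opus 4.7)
The approach is to reformulate both the hypothesis and the conclusion as statements about $k$ linearly independent points of $\La^\ast$ lying in explicit parallelepipeds, and then establish a one-parameter family of inclusions between those parallelepipeds. By the characterisation of Schmidt--Summerer exponents in terms of successive minima (essentially Proposition \ref{prop:redefinition_weighted}, applied on the dual side to $\La^\ast$ with both $\pmb\mu^\ast(\delta)$ and $-\pmb\mu(\gamma_\delta)$ in place of $\pmb\mu$), the hypothesis $\bpsi_k(\La^\ast,\pmb\mu^\ast(\delta))\leq 1-\delta$ is equivalent to the following: for every $\e>0$ there exist arbitrarily large $s$ such that $e^{s(1-\delta+\e)}\cQ^\ast(s,\delta)$ contains $k$ linearly independent points of $\La^\ast$. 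The conclusion is the analogous statement for $e^{s'((d-1)(1-\gamma_\delta)+\e'')}\cR(s')$, where $\cR(s')=\{\vec z\,:\,|z_k|\leq e^{s'\nu_k}\}$ with $\pmb\nu=-\pmb\mu(\gamma_\delta)$. The $\apsi_k$ version is obtained by changing ``some arbitrarily large'' to ``every sufficiently large''.

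Next, I would look for a positive constant $c=c(\pmb\sigma,\pmb\rho,\delta)$, depending only on the weights and on $\delta$, such that, setting $s'=cs$ and $\e''=\e/c$, the inclusion
\[
  e^{s(1-\delta+\e)}\cQ^\ast(s,\delta)\ \subset\ e^{cs((d-1)(1-\gamma_\delta)+\e/c)}\cR(cs)
\]
holds. With this coupling of $\e''$ to $\e$ the $\e$-terms cancel on both sides, and the inclusion reduces to the finitely many coordinatewise inequalities
\[
  c\leq \frac{\sigma_j\delta}{\gamma_\delta-1+\sigma_j}\quad (j=1,\ldots,m),\qquad c\geq \frac{\rho_i}{1-\gamma_\delta(1-\rho_i)}\quad (i=1,\ldots,n).
\]
For $\delta>1$ (so $\gamma_\delta>1$) the first ratio is increasing in $\sigma_j$ and the second is decreasing in $\rho_i$, so the binding constraints come from $j=m$ and $i=n$. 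The heart of the argument is then the identity
\[
  \frac{\sigma_m\delta}{\gamma_\delta-1+\sigma_m}\ =\ \frac{\rho_n}{1-\gamma_\delta(1-\rho_n)},
\]
which is nothing but the cross-multiplied form of the first branch of \eqref{eq:gamma_of_delta}; setting $c$ equal to this common value satisfies all the inequalities. For $\delta<1$ both monotonicities flip, the binding constraints become $j=1$ and $i=1$, and the second branch of \eqref{eq:gamma_of_delta} yields the matching identity $\sigma_1\delta/(\gamma_\delta-1+\sigma_1)=\rho_1/(1-\gamma_\delta(1-\rho_1))$. The boundary case $\delta=1$ is trivial because $\gamma_\delta=1$ and $\pmb\mu^\ast(1)=\vec e_1-\vec e_2=-\pmb\mu(1)$.

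Once the inclusion is established, any $k$ linearly independent points of $\La^\ast$ witnessing the hypothesis at scale $s$ and slack $\e$ automatically witness the conclusion at scale $s'=cs$ and slack $\e''=\e/c$. Since $c>0$ is fixed, $s\to\infty$ is equivalent to $s'\to\infty$, and $\e''\to 0$ with $\e$; the same reasoning with ``every sufficiently large'' in place of ``some arbitrarily large'' yields the $\apsi_k$ statement. I expect the main obstacle to be the algebraic identity collapsing the extremal upper and lower bounds on $c$ into a single common value: it is a mechanical cross-multiplication, but it is precisely what singles out the specific formula \eqref{eq:gamma_of_delta}, and the sign change of $\gamma_\delta-1$ at $\delta=1$ forces the argument to split cleanly into the two branches of that formula rather than admitting a unified computation.
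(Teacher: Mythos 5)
Your proposal is correct and is essentially the paper's own argument: your constant $c$ is just the reciprocal of the paper's rescaling factor $s_\delta/s$, your coordinatewise constraints (with the binding indices $j=m,i=n$ for $\delta>1$ and $j=1,i=1$ for $\delta<1$, collapsed by the cross-multiplied form of \eqref{eq:gamma_of_delta}) are exactly the paper's inequalities \eqref{eq:s_delta}, and the resulting inclusion is \eqref{eq:Q_ast_is_contained_in_Q-}. Only cosmetic care is needed for $\delta<1$, where the $\sigma_j$-constraints with $\sigma_j<1-\gamma_\delta$ are vacuous rather than upper bounds, and at $\delta=0$, where the ratio $\sigma_1\delta/(\gamma_\delta-1+\sigma_1)$ degenerates to $0/0$ and one should simply take $c=\rho_1/\bigl(1-\gamma_\delta(1-\rho_1)\bigr)$.
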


\begin{proof}
  For each $s\geq0$ and $\delta\geq0$ let us set
  \[
    s_\delta=
    \begin{cases}
      s\big(\rho_n^{-1}+\big(1-\rho_n^{-1}\big)\gamma_\delta\big), & \mbox{if } \delta\geq1, \\
      s\big(\rho_1^{-1}+\big(1-\rho_1^{-1}\big)\gamma_\delta\big), & \mbox{if } \delta\leq1.
    \end{cases}
  \]
  Then
  \[
    s_\delta\delta=
    \begin{cases}
      s\big(\big(1-\sigma_m^{-1}\big)+\sigma_m^{-1}\gamma_\delta\big), & \mbox{if } \delta\geq1, \\
      s\big(\big(1-\sigma_1^{-1}\big)+\sigma_1^{-1}\gamma_\delta\big), & \mbox{if } \delta\leq1,
    \end{cases}
  \]
  and we have for each $j\in\{1,\ldots,m\}$, $i\in\{1,\ldots,n\}$
  \begin{equation}\label{eq:s_delta}
    \begin{aligned}
      s_\delta\delta
      & \geq
      s((1-\sigma_j^{-1})+\sigma_j^{-1}\gamma_\delta), \\
      s_\delta
      & \leq
      s(\rho_i^{-1}+(1-\rho_i^{-1})\gamma_\delta).
    \end{aligned}
  \end{equation}
  Upon some minor calculations, we conclude from \eqref{eq:s_delta} that
  \begin{equation}\label{eq:Q_ast_is_contained_in_Q-}
    e^{s_\delta(1-\delta)}\cQ^\ast(s_\delta,\delta)
    \subset
    e^{s(d-1)(1-\gamma_\delta)}\cQ(-s,\gamma_\delta),
  \end{equation}
  where $\cQ$ and $\cQ^\ast$ are as in the proofs of Propositions \ref{prop:weighted_in_terms_of_schmimmerer} and \ref{prop:dual_weighted_in_terms_of_schmimmerer}. Hence the desired statement follows immediately.
\end{proof}


\begin{theorem}\label{t:weighted_dyson_schmimmerered}
  For every $\delta\geq1$ we have
  \[
    \bpsi_1\big(\La^\ast,\pmb\mu^\ast(\delta)\big)\leq 1-\delta
    \implies
    \bpsi_1\big(\La,\pmb\mu(\gamma_\delta)\big)\leq 1-\gamma_\delta\,.
  \]
\end{theorem}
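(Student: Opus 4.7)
The plan is to obtain Theorem \ref{t:weighted_dyson_schmimmerered} by simply chaining Lemma \ref{l:immersion} (applied with $k=1$) with Corollary \ref{cor:essence_of_transference_weighted}, using the trivial identity $\bpsi_1 = \bPsi_1$, which holds because $S_1(\pmb\tau) = L_1(\pmb\tau)$ by definition. Given how thoroughly the tools have been developed in the preceding sections, this final step is little more than assembling the pieces.

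First I would apply Lemma \ref{l:immersion} with $k=1$ to the hypothesis $\bpsi_1(\La^\ast,\pmb\mu^\ast(\delta))\leq 1-\delta$. This converts the given bound, which lives along the $\pmb\mu^\ast(\delta)$ ray, into a bound along the $-\pmb\mu(\gamma_\delta)$ ray, namely
\[
  \bpsi_1\big(\La^\ast,-\pmb\mu(\gamma_\delta)\big) \leq (d-1)(1-\gamma_\delta).
\]
The underlying geometric content here is the inclusion \eqref{eq:Q_ast_is_contained_in_Q-} established inside the proof of Lemma \ref{l:immersion}, which relates the boxes $\cQ^\ast(s_\delta,\delta)$ and $\cQ(-s,\gamma_\delta)$ with exponential scaling factors that precisely produce the $(d-1)$ on the right-hand side.

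Next I would invoke Corollary \ref{cor:essence_of_transference_weighted} with $\pmb\mu = \pmb\mu(\gamma_\delta)$ to get
\[
  \bPsi_1\big(\La,\pmb\mu(\gamma_\delta)\big) \leq \frac{\bPsi_1\big(\La^\ast,-\pmb\mu(\gamma_\delta)\big)}{d-1}.
\]
Since $S_1 = L_1$, we have $\bPsi_1 = \bpsi_1$ in both arguments. Combining this with the bound from the previous step yields
\[
  \bpsi_1\big(\La,\pmb\mu(\gamma_\delta)\big) \leq \frac{(d-1)(1-\gamma_\delta)}{d-1} = 1-\gamma_\delta,
\]
which is the desired conclusion.

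There is no substantive obstacle: all the non-trivial content (Minkowski's second theorem, Mahler's inequality for dual lattices, and the box inclusion matching the parameters $\delta$ and $\gamma_\delta$) has already been absorbed into Proposition \ref{prop:essence_of_transference_split_up} and Lemma \ref{l:immersion}. The hypothesis $\delta\geq 1$ is not strictly required by either ingredient in isolation, but it singles out the branch of \eqref{eq:gamma_of_delta} corresponding to the large-$\gamma$ regime of Theorem \ref{t:weighted_Dyson}; a symmetric statement for $\delta\leq 1$ would follow by the same argument using the other branch.
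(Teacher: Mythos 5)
Your proposal is correct and follows exactly the paper's own proof: apply Lemma \ref{l:immersion} with $k=1$, then Corollary \ref{cor:essence_of_transference_weighted} at $\pmb\mu=\pmb\mu(\gamma_\delta)$, using $\bpsi_1=\bPsi_1$ to chain the two bounds and cancel the factor $d-1$. Your closing remark about $\delta\geq1$ also matches the paper's own comment that this condition merely reflects $\omega_{\pmb\sigma,\pmb\rho}(\Theta)\geq1$.
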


\begin{proof}
  It suffices to apply Lemma \ref{l:immersion} with $k=1$, Corollary \ref{cor:essence_of_transference_weighted}, and the fact that the exponents $\bpsi_1$ and $\bPsi_1$ are the same.
\end{proof}

Theorem \ref{t:weighted_dyson_schmimmerered}, in view of Corollary \ref{cor:weighted_in_terms_of_schmimmerer}, is a reformulation of Theorem \ref{t:weighted_Dyson}. Note that the condition $\delta\geq1$ reflects the fact that, by Minkowski's convex body theorem, we always have $\omega_{\pmb\sigma,\pmb\rho}(\Theta)\geq1$, in contrast to $\omega_{\pmb\sigma,\pmb\rho}^{(k)}(\Theta)$ with $k\geq2$, which can attain values in the interval $[0,1)$.

The key ingredient in the proof of Theorem \ref{t:weighted_dyson_schmimmerered} is Corollary \ref{cor:essence_of_transference_weighted}. Lemma \ref{l:immersion} can be considered as a rather important but technical statement: its essence is shown by the inclusion \eqref{eq:Q_ast_is_contained_in_Q-} performing a proper rescaling. In its turn, Corollary \ref{cor:essence_of_transference_weighted} gets split by statements \textup{(ii)} and \textup{(iii)} of Proposition \ref{prop:essence_of_transference_split_up_weighted} into a sequence of inequalities between the Schmidt--Summerer exponents $\bPsi_k(\La,\pmb\mu)$. Applying Proposition \ref{prop:weighted_in_terms_of_schmimmerer_second_type}, Corollary \ref{cor:weighted_in_terms_of_schmimmerer_second_type}, statement \textup{(iii)} of Proposition \ref{prop:essence_of_transference_split_up_weighted}, and, of course, Lemma \ref{l:immersion}, we get the following splitting of Theorem \ref{t:weighted_Dyson}.

\begin{theorem}
  Set $\Omega_k=\Omega_{\pmb\sigma,\pmb\rho}^{(k)}(\Theta)$ and $\tr\Omega_k=\Omega_{\pmb\rho,\pmb\sigma}^{(k)}(\tr\Theta)$ for each $k=1,\ldots,d$. Then
  \[
    \Omega_1
    \geq\ldots\geq
    \Omega_k
    \geq\ldots\geq
    \Omega_{d-1}
    \geq
    \frac{\big(\sigma_m^{-1}-1\big)+\rho_n^{-1}\tr\Omega_1}
         {\sigma_m^{-1}+\big(\rho_n^{-1}-1\big)\tr\Omega_1}\,.
  \]
\end{theorem}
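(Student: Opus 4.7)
The plan is to split the theorem into its two constituent assertions: the monotonicity chain $\Omega_1\geq\ldots\geq\Omega_{d-1}$, and the transference bound $\Omega_{d-1}\geq\gamma_{\tr\Omega_1}$, where $\gamma_\delta$ is the function defined in \eqref{eq:gamma_of_delta}. Both will be handled by translating via Corollary \ref{cor:weighted_in_terms_of_schmimmerer_second_type} into claims about the second-type Schmidt--Summerer exponents $\bPsi_k$, which are in turn controlled by Proposition \ref{prop:essence_of_transference_split_up_weighted}. For the monotonicity I would observe that Corollary \ref{cor:weighted_in_terms_of_schmimmerer_second_type} yields the equivalence $\Omega_k\geq\gamma\iff\bPsi_k(\La,\pmb\mu(\gamma))/k\leq1-\gamma$, while statement \textup{(iii)} of Proposition \ref{prop:essence_of_transference_split_up_weighted} says that $\bPsi_k(\La,\pmb\mu)/k$ is nondecreasing in $k$. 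Therefore, if $\Omega_{k+1}\geq\gamma$, then $\bPsi_k/k\leq\bPsi_{k+1}/(k+1)\leq1-\gamma$, hence $\Omega_k\geq\gamma$; taking supremum yields $\Omega_k\geq\Omega_{k+1}$ for each $k\in\{1,\ldots,d-2\}$.

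For the final inequality I would set $\delta=\tr\Omega_1$. Since $\tr\Omega_1\geq1$ by Minkowski's convex body theorem, we lie in the $\delta\geq1$ branch of \eqref{eq:gamma_of_delta}, so $\gamma_\delta$ matches the right-hand side of the theorem. By Proposition \ref{prop:dual_weighted_in_terms_of_schmimmerer} (together with the identity $\bPsi_1=\bpsi_1$, which holds since $S_1=L_1$) we obtain $\bpsi_1\big(\La^\ast,\pmb\mu^\ast(\delta)\big)\leq1-\delta$. Lemma \ref{l:immersion} with $k=1$ upgrades this to
\[
  \bpsi_1\big(\La^\ast,-\pmb\mu(\gamma_\delta)\big)\leq(d-1)(1-\gamma_\delta).
\]
Invoking statement \textup{(ii)} of Proposition \ref{prop:essence_of_transference_split_up_weighted} I identify $\bPsi_{d-1}(\La,\pmb\mu(\gamma_\delta))$ with $\bPsi_1\big(\La^\ast,-\pmb\mu(\gamma_\delta)\big)=\bpsi_1\big(\La^\ast,-\pmb\mu(\gamma_\delta)\big)$. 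Dividing by $d-1$ and reapplying Corollary \ref{cor:weighted_in_terms_of_schmimmerer_second_type} then recasts the inequality as $\Omega_{d-1}\geq\gamma_\delta$, as desired.

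The only genuine obstacle is bookkeeping: one has to track which lattice ($\La$ or $\La^\ast$), which sign of $\pmb\mu$, and which type of Schmidt--Summerer exponent each quantity refers to, since the transference step must bridge Lemma \ref{l:immersion} (phrased for first-type exponents $\bpsi$ on $\La^\ast$) with the second-type quantity $\bPsi_{d-1}$ on $\La$. The decisive identity is $\bPsi_{d-1}(\La,\pmb\mu)=\bPsi_1(\La^\ast,-\pmb\mu)$ from part \textup{(ii)}, which precisely reduces the $(d-1)$-indexed second-type exponent on $\La$ to the $1$-indexed first-type exponent on $\La^\ast$ that Lemma \ref{l:immersion} controls; everything else is a direct application of statements already established.
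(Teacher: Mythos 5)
Your proposal is correct and follows essentially the same route the paper intends: translating the chain $\Omega_1\geq\ldots\geq\Omega_{d-1}$ through Corollary \ref{cor:weighted_in_terms_of_schmimmerer_second_type} and statement \textup{(iii)} of Proposition \ref{prop:essence_of_transference_split_up_weighted}, and obtaining the final inequality from Lemma \ref{l:immersion} combined with the duality $\bPsi_{d-1}(\La,\pmb\mu)=\bPsi_1(\La^\ast,-\pmb\mu)=\bpsi_1(\La^\ast,-\pmb\mu)$ from statement \textup{(ii)}. You have merely written out in detail the bookkeeping that the paper leaves as a sketch, and the details are right.
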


\subsection{Transference diagram}\label{sec:diagram}

Application of Lemma \ref{l:immersion} and Corollary \ref{cor:essence_of_transference_weighted} leads us from $\pmb\mu^\ast(\delta)$ to $\pmb\mu(\gamma_\delta)$ via $-\pmb\mu(\gamma_\delta)$. Those three points belong to the subspace spanned by $\vec e_1$, $\vec e_2$. Lemma \ref{l:transference_diagram} and Figure \ref{fig:transference_diagram} below demonstrate how they are related.

\begin{lemma}\label{l:transference_diagram}
  Suppose the weights are nontrivial (i.e. either $\sigma_1\neq\sigma_m$, or $\rho_1\neq\rho_n$).

  If $\delta=1$, then $-\pmb\mu(\gamma_\delta)=\pmb\mu^\ast(\delta)$.

  If $n>1$ (equivalently, $\rho_1<1$, $\rho_n<1$) and $\delta\neq1$, then
  the line through the points $\pmb\mu^\ast(\delta)$ and $-\pmb\mu(\gamma_\delta)$ passes through a point $\pmb\nu$, that depends only on the sign of $\delta-1$. This point can be expressed explicitly as
  \[
    \pmb\nu=
    \begin{cases}
      \dfrac{1}{1-\rho_n^{-1}}
      \big(
        \sigma_m^{-1}\vec e_1+
        \rho_n^{-1}\vec e_2
      \big), & \mbox{if } \delta>1, \\
      \dfrac{1\vphantom{1^{\big|}}}{1-\rho_1^{-1}}
      \big(
        \sigma_1^{-1}\vec e_1+
        \rho_1^{-1}\vec e_2
      \big), & \mbox{if } \delta<1.
    \end{cases}
  \]

  If $n=1$ (equivalently, $\rho_1=\rho_n=1$) and $\delta\neq1$, then the line through the points $\pmb\mu^\ast(\delta)$ and $-\pmb\mu(\gamma_\delta)$ is parallel to either $\sigma_m^{-1}\vec e_1+\vec e_2$, or $\sigma_1^{-1}\vec e_1+\vec e_2$, according to whether $\delta>1$, or $\delta<1$.
\end{lemma}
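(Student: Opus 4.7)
Since the weights are nontrivial, $\vec e_1$ and $\vec e_2$ are linearly independent (proportionality would force $\sigma_1=\cdots=\sigma_m$ and $\rho_1=\cdots=\rho_n$), so the plane they span admits coordinates in which $a\vec e_1+b\vec e_2\leftrightarrow(a,b)$. By \eqref{eq:pmb_mu_ast} and \eqref{eq:pmb_mu} we have $\pmb\mu^\ast(\delta)\leftrightarrow(\delta,-1)$ and $-\pmb\mu(\gamma_\delta)\leftrightarrow(1,-\gamma_\delta)$. The case $\delta=1$ is disposed of by noting that either branch of \eqref{eq:gamma_of_delta} gives $\gamma_1=1$, so the two points coincide.

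For $\delta>1$ the core of the computation is the identity
\[
  \gamma_\delta-1=\frac{\delta-1}{\sigma_m^{-1}+(\rho_n^{-1}-1)\delta}\,,
\]
obtained from the first branch of \eqref{eq:gamma_of_delta} by direct manipulation. Using this, collinearity of a third point $(a,b)$ with $(\delta,-1)$ and $(1,-\gamma_\delta)$ is equivalent to
\[
  a-1=(b+\gamma_\delta)\bigl(\sigma_m^{-1}+(\rho_n^{-1}-1)\delta\bigr).
\]
Substituting the explicit formula for $\gamma_\delta$ and collecting powers of $\delta$ turns this into
\[
  a-(b+1)\sigma_m^{-1}=\bigl(b(\rho_n^{-1}-1)+\rho_n^{-1}\bigr)\delta.
\]
For $(a,b)$ to be independent of $\delta$ (with the sign of $\delta-1$ fixed), both sides must vanish. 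The right-hand side forces $b=\rho_n^{-1}/(1-\rho_n^{-1})$, whence $b+1=(1-\rho_n^{-1})^{-1}$ and $a=\sigma_m^{-1}/(1-\rho_n^{-1})$, which is exactly the claimed $\pmb\nu$. The case $\delta<1$ is identical after replacing the subscripts $m,n$ by $1,1$, using the second branch of \eqref{eq:gamma_of_delta}.

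When $n=1$ one has $\rho_n=1$, and the factor $1-\rho_n^{-1}$ vanishes, so the above linear system admits no finite solution in $b$. Geometrically this means the lines are parallel rather than concurrent, and their common direction vector is the reciprocal of the slope $\bigl(\sigma_m^{-1}+(\rho_n^{-1}-1)\delta\bigr)^{-1}$, namely $(\sigma_m^{-1},1)$, i.e.\ the vector $\sigma_m^{-1}\vec e_1+\vec e_2$; the case $\delta<1$ yields $\sigma_1^{-1}\vec e_1+\vec e_2$ in the same way. The plan is almost entirely algebraic; the only care needed is keeping track of the piecewise definition of $\gamma_\delta$ and recognizing the $n=1$ situation as the natural degeneration in which the common point of the pencil of lines moves off to infinity.
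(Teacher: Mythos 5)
Your computation is correct: the coordinates $(\delta,-1)$ and $(1,-\gamma_\delta)$ relative to the basis $\vec e_1,\vec e_2$ (legitimate since nontrivial weights make these vectors independent), the identity $\gamma_\delta-1=(\delta-1)/\bigl(\sigma_m^{-1}+(\rho_n^{-1}-1)\delta\bigr)$, and the resulting collinearity equation all check out, and the vanishing of both sides at the claimed $\pmb\nu$ does show every such line passes through it, with the $n=1$ case correctly read off from the slope. The paper explicitly leaves this verification to the reader, so your elementary coordinate argument is exactly the intended proof.
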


The proof is elementary and we leave it to the reader. The case  $n>1$, $\delta>1$ is illustrated by Figure \ref{fig:transference_diagram}.

\begin{figure}[h]
\centering
\begin{tikzpicture}[scale=2.1]
  \draw[->,>=stealth'] (-1.9,0) -- (1,0) node[above right] {$\vec e_1$};
  \draw (1,0) -- (5.3,0); 
  \draw[->,>=stealth'] (0,-2.05) -- (0,1) node[above right] {$\vec e_2$};
  \draw (0,1) -- (0,2);

  \draw[dashed] (0,-1) -- (5.3,-1);
  \draw[dashed] (0,1) -- (-1.9,1);
  \draw[dashed] (1,0) -- (1,-2.05);
  \draw[dashed] (-1,0) -- (-1,2);
  \draw[dashed] (-1.5,-1.5) -- (1.18,-1.5);
  \draw[dashed] (1.65,-1.5) -- (5.3,-1.5);

  \draw[color=blue,thick] (4.5,-1) -- (-1.5,-1.5);
  \draw[color=blue,thick] (1,-3/2+5/24) -- (-1,3/2-5/24);

  \node[fill=white,draw=blue,thick,circle,inner sep=1.1pt] at (-1.5,-1.5) {};
  \node[fill=blue,draw=blue,circle,inner sep=1.2pt] at (1,-3/2+5/24) {};
  \node[fill=blue,draw=blue,circle,inner sep=1.2pt] at (4.5,-1) {};
  \node[fill=blue,draw=blue,circle,inner sep=1.2pt] at (-1,3/2-5/24) {};

  \draw (-1,3/2-5/24) node[above left]{$\pmb\mu(\gamma_\delta)$};
  \draw (1-0.02,-3/2+5/24+0.06) node[below right]{$-\pmb\mu(\gamma_\delta)$};
  \draw (4.48,-1) node[below right]{$\pmb\mu^\ast(\delta)$};
  \draw (-1.5,-1.5) node[above left]{$\pmb\nu$};
\end{tikzpicture}
\caption{Transference diagram (case $\delta>1$, $n>1$, nontrivial weights)} \label{fig:transference_diagram}
\end{figure}
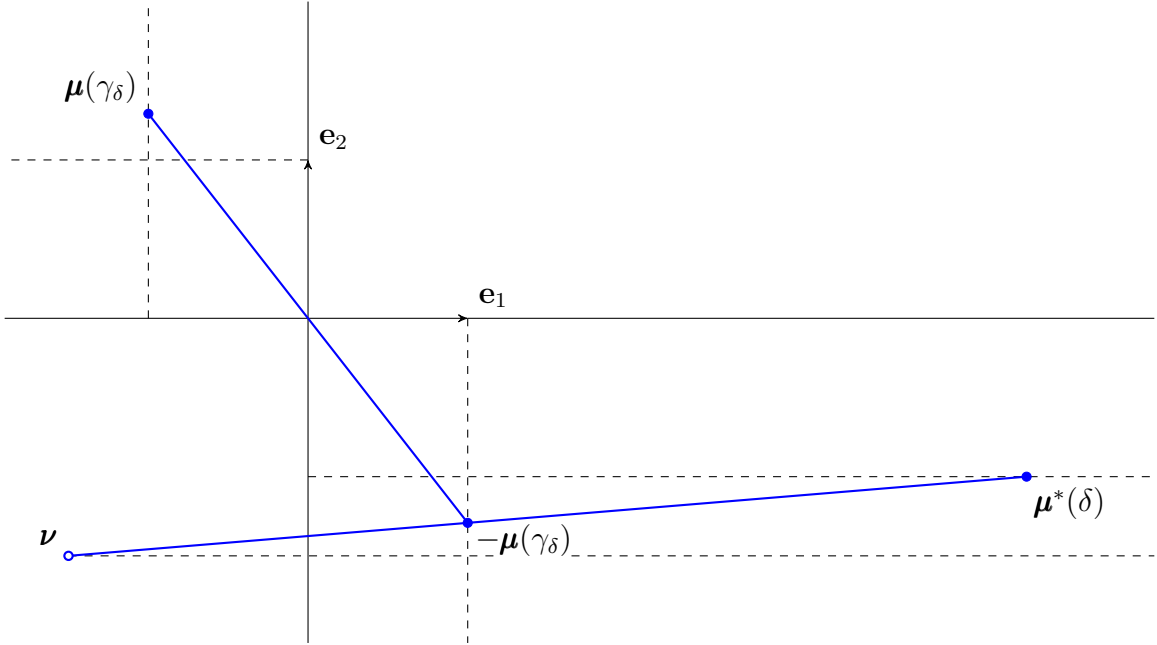

\subsection{Transference equalities for intermediate weighted Diophantine exponents}\label{sec:unit_product}

One might ask whether there is a chain of inequalities between the Schmidt--Summerer exponents of the first type that splits Theorem \ref{t:weighted_Dyson} in a way similar to that described above. Such a result would split Theorem \ref{t:weighted_Dyson} into a chain of inequalities between the exponents $\omega_{\pmb\sigma,\pmb\rho}^{(k)}(\Theta)$. We do not know the answer, though we doubt that such a chain of inequalities exists.

However, there is a relation, and a very nice one, between the exponents $\omega_{\pmb\sigma,\pmb\rho}^{(k)}(\Theta)$ and the exponents $\hat\omega_{\pmb\rho,\pmb\sigma}^{(k)}(\tr\Theta)$. It generalises the corresponding relation that holds in the case of trivial weights (see \cite[Corollary 8.5]{german_AA_2012}). It is given by Theorem \ref{t:omega_times_omega} below.

\begin{lemma}\label{l:for_omega_times_omega}
  For each $k\in\{1,\ldots,d\}$ and every $\gamma>0$ we have
  \begin{equation*}
  \begin{array}{l}
    \bpsi_k\big(\La^\ast,-\pmb\mu(\gamma)\big)=
    \gamma\cdot
    \bpsi_k\big(\La^\ast,\pmb\mu^\ast\big(\gamma^{-1}\big)\big), \\
    \apsi_k\big(\La^\ast,-\pmb\mu(\gamma)\big)=
    \gamma\cdot
    \apsi_k\big(\La^\ast,\pmb\mu^\ast\big(\gamma^{-1}\big)\big).
    \vphantom{1^{\big|}}
  \end{array}
  \end{equation*}
\end{lemma}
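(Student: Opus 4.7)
The plan is to notice that the two direction vectors $-\pmb\mu(\gamma)$ and $\pmb\mu^\ast(\gamma^{-1})$ are in fact collinear, and then to exploit the elementary positive homogeneity of the Schmidt--Summerer exponents under rescaling of the direction.

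First I would compute directly from definitions \eqref{eq:pmb_mu} and \eqref{eq:pmb_mu_ast}:
\[
-\pmb\mu(\gamma) = \vec e_1 - \gamma \vec e_2,
\qquad
\pmb\mu^\ast(\gamma^{-1}) = \gamma^{-1}\vec e_1 - \vec e_2.
\]
Multiplying the second identity by $\gamma$ (which is positive by hypothesis) yields the key equality
\[
-\pmb\mu(\gamma) = \gamma\cdot\pmb\mu^\ast(\gamma^{-1}).
\]

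Second, I would record the positive-homogeneity property of $\bpsi_k$ and $\apsi_k$ in the direction: for any lattice $\La'$, any $\pmb\nu\in\cT$, and any $c>0$,
\[
\bpsi_k(\La',c\pmb\nu) = c\,\bpsi_k(\La',\pmb\nu),
\qquad
\apsi_k(\La',c\pmb\nu) = c\,\apsi_k(\La',\pmb\nu).
\]
This is immediate from Definition \ref{def:psi_weighted} by the substitution $u=cs$ in the defining $\liminf$ / $\limsup$: since $c>0$, the map $s\mapsto cs$ is an order-preserving bijection of the ray $s\to+\infty$, so
\[
\liminf_{s\to+\infty}\frac{L_k(\La',s\cdot c\pmb\nu)}{s}
=\liminf_{u\to+\infty}\frac{c\,L_k(\La',u\pmb\nu)}{u}
=c\,\bpsi_k(\La',\pmb\nu),
\]
and similarly for $\apsi_k$.

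Third, I would combine the two observations: apply the homogeneity identity to $\La'=\La^\ast$, $\pmb\nu=\pmb\mu^\ast(\gamma^{-1})$, $c=\gamma$, and substitute $c\pmb\nu = -\pmb\mu(\gamma)$ from the first step. This yields both asserted equalities at once. The argument is essentially an identification of parameters plus one substitution in a $\liminf/\limsup$; there is no real obstacle, but one must be careful that the hypothesis $\gamma>0$ is used exactly to ensure that multiplication by $\gamma$ preserves the direction of the limiting ray (otherwise the substitution would swap $\liminf$ and $\limsup$, or introduce a sign that would interchange $\bpsi_k$ with $-\apsi_k$ and vice versa).
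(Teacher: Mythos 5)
Your proposal is correct and matches the paper's own argument: the paper likewise observes $-\pmb\mu(\gamma)=\vec e_1-\gamma\vec e_2=\gamma\,\pmb\mu^\ast(\gamma^{-1})$ and then performs exactly your substitution $s'=s\gamma$ in the defining $\liminf$/$\limsup$ to extract the factor $\gamma$. No gaps; the use of $\gamma>0$ to preserve the direction of the ray is the same point the paper relies on implicitly.
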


\begin{proof}
  By \eqref{eq:pmb_mu} and \eqref{eq:pmb_mu_ast} we have $-\pmb\mu(\gamma)=\vec e_1-\gamma\vec e_2=\gamma\pmb\mu^\ast(\gamma^{-1})$. Therefore,
  \[
    \frac{L_k(\La^\ast,-s\pmb\mu(\gamma))}{s}=
    \frac{L_k(\La^\ast,s\gamma\pmb\mu^\ast(\gamma^{-1}))}{s}=
    \gamma\cdot\frac{L_k(\La^\ast,s'\pmb\mu^\ast(\gamma^{-1}))}{s'}\,,
  \]
  where $s'=s\gamma$. Hence the desired relations follow.
\end{proof}


\begin{theorem}\label{t:omega_times_omega}
  For every $k\in\{1,\ldots,d\}$ we have
  \begin{equation}\label{eq:omega_times_omega}
    \omega_{\pmb\sigma,\pmb\rho}^{(k)}(\Theta)\cdot\hat\omega_{\pmb\rho,\pmb\sigma}^{(d+1-k)}(\tr\Theta)=1,
    \qquad
    \hat\omega_{\pmb\sigma,\pmb\rho}^{(k)}(\Theta)\cdot\omega_{\pmb\rho,\pmb\sigma}^{(d+1-k)}(\tr\Theta)=1.
  \end{equation}
  Here it is assumed that if any of the factors is zero, then the other one is equal to $+\infty$, and vice versa.
\end{theorem}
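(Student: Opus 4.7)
The plan is to translate everything into Schmidt--Summerer exponents, use the duality between $\La$ and $\La^\ast$ provided by Proposition \ref{prop:essence_of_transference_split_up_weighted}(i), and then rescale via Lemma \ref{l:for_omega_times_omega} to switch from the covector $-\pmb\mu(\gamma)$ (which is what duality produces) to $\pmb\mu^\ast(\gamma^{-1})$ (which is what the transposed problem requires). Concretely, I would argue as follows.

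\textbf{Main case $0<\gamma<\infty$.} Set $\gamma=\omega_{\pmb\sigma,\pmb\rho}^{(k)}(\Theta)$ and assume $0<\gamma<+\infty$. By Proposition \ref{prop:weighted_in_terms_of_schmimmerer} one has $\bpsi_k\big(\La,\pmb\mu(\gamma)\big)=1-\gamma$. Apply Proposition \ref{prop:essence_of_transference_split_up_weighted}(i) to swap $\bpsi_k$ on $\La$ for $\apsi_{d+1-k}$ on $\La^\ast$, obtaining
\[
  \apsi_{d+1-k}\big(\La^\ast,-\pmb\mu(\gamma)\big)=\gamma-1.
\]
Now invoke Lemma \ref{l:for_omega_times_omega} (which requires precisely $\gamma>0$) to rewrite the left-hand side as $\gamma\cdot\apsi_{d+1-k}\big(\La^\ast,\pmb\mu^\ast(\gamma^{-1})\big)$, giving
\[
  \apsi_{d+1-k}\big(\La^\ast,\pmb\mu^\ast(\gamma^{-1})\big)=1-\gamma^{-1}.
\]
By Proposition \ref{prop:dual_weighted_in_terms_of_schmimmerer} the latter is exactly $\hat\omega_{\pmb\rho,\pmb\sigma}^{(d+1-k)}(\tr\Theta)=\gamma^{-1}$, which yields the first identity in \eqref{eq:omega_times_omega}. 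The second identity is proved identically, swapping the roles of the $\bpsi$ and $\apsi$ throughout (Proposition \ref{prop:essence_of_transference_split_up_weighted}(i) supplies both directions).

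\textbf{Boundary cases.} The situations $\gamma=0$ and $\gamma=+\infty$ need separate handling because Lemma \ref{l:for_omega_times_omega} is stated for $\gamma>0$. If $\omega_{\pmb\sigma,\pmb\rho}^{(k)}(\Theta)=0$, then by Corollary \ref{cor:weighted_in_terms_of_schmimmerer} we have $\bpsi_k\big(\La,\pmb\mu(\gamma)\big)>1-\gamma$ for every $\gamma>0$; using duality this means $\apsi_{d+1-k}\big(\La^\ast,-\pmb\mu(\gamma)\big)<\gamma-1$, and rescaling via Lemma \ref{l:for_omega_times_omega} (applied for any small $\gamma>0$) gives $\apsi_{d+1-k}\big(\La^\ast,\pmb\mu^\ast(\gamma^{-1})\big)<1-\gamma^{-1}$, i.e.\ $\hat\omega_{\pmb\rho,\pmb\sigma}^{(d+1-k)}(\tr\Theta)\geq\gamma^{-1}$ for every $\gamma>0$, forcing $\hat\omega_{\pmb\rho,\pmb\sigma}^{(d+1-k)}(\tr\Theta)=+\infty$. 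The opposite boundary $\omega=+\infty$ is handled symmetrically by letting $\gamma\to+\infty$ and using the monotonicity of $\bpsi_k(\La,\pmb\mu(\gamma))+\gamma$ and $\apsi_k(\La^\ast,\pmb\mu^\ast(\delta))+\delta$ from Propositions \ref{prop:weighted_in_terms_of_schmimmerer} and \ref{prop:dual_weighted_in_terms_of_schmimmerer}.

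\textbf{Expected obstacle.} The computational core (the three identities) is essentially a one-line chase through duality and rescaling; the only real care is needed in two places. First, one must verify that the three equivalences $\omega\leftrightarrow\bpsi$, duality $\bpsi_k(\La,\cdot)\leftrightarrow -\apsi_{d+1-k}(\La^\ast,\cdot)$, and rescaling $\pmb\mu^\ast(\gamma^{-1})\leftrightarrow -\gamma^{-1}\pmb\mu(\gamma)$ line up so that the \emph{lower} Schmidt--Summerer exponent on the primal side corresponds to the \emph{upper} one on the dual side — which is why the product of $\omega^{(k)}$ on one side with $\hat\omega^{(d+1-k)}$ on the other side equals $1$, rather than $\omega$ times $\omega$. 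Second, the boundary/degeneracy analysis requires the strict monotonicity statements in Propositions \ref{prop:weighted_in_terms_of_schmimmerer} and \ref{prop:dual_weighted_in_terms_of_schmimmerer} rather than their equality form; this is the only mildly delicate piece of the argument, and it is handled by taking limits $\gamma\to 0^+$ or $\gamma\to+\infty$ in the inequality versions of Corollaries \ref{cor:weighted_in_terms_of_schmimmerer} and (its dual counterpart extracted from) Proposition \ref{prop:dual_weighted_in_terms_of_schmimmerer}.
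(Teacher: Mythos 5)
Your proposal is correct and follows essentially the same route as the paper: the same chain through Proposition \ref{prop:weighted_in_terms_of_schmimmerer}, statement (i) of Proposition \ref{prop:essence_of_transference_split_up_weighted}, Lemma \ref{l:for_omega_times_omega}, and Proposition \ref{prop:dual_weighted_in_terms_of_schmimmerer}, with the degenerate values $0$ and $+\infty$ handled via the inequality (monotonicity) versions, just as the paper does with Corollary \ref{cor:weighted_in_terms_of_schmimmerer}. The only cosmetic difference is that you prove the second identity by swapping $\bpsi$ and $\apsi$ in the chain, whereas the paper deduces it from the first by exchanging $(\Theta,\pmb\sigma,\pmb\rho,k)$ with $(\tr\Theta,\pmb\rho,\pmb\sigma,d+1-k)$.
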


\begin{proof}
  Proving the first equality in \eqref{eq:omega_times_omega} will suffice, as each of the two equalities can be turned into the other by simple swapping the tuple $(\Theta,\pmb\sigma,\pmb\rho,k)$ for $(\tr\Theta,\pmb\rho,\pmb\sigma,d+1-k)$.

  Suppose $\gamma>0$.
  Applying successively Proposition \ref{prop:weighted_in_terms_of_schmimmerer}, statement \textup{(i)} of Proposition \ref{prop:essence_of_transference_split_up_weighted}, Lemma \ref{l:for_omega_times_omega}, and Proposition \ref{prop:dual_weighted_in_terms_of_schmimmerer}, we get
  \begin{multline*}
    \omega_{\pmb\sigma,\pmb\rho}^{(k)}(\Theta)=\gamma
    \iff
    \bpsi_k(\La,\pmb\mu(\gamma))=1-\gamma
    \iff
    \apsi_{d+1-k}(\La^\ast,-\pmb\mu(\gamma))=\gamma-1
    \iff \\
    \iff
    \apsi_{d+1-k}\big(\La^\ast,\pmb\mu^\ast\big(\gamma^{-1}\big)\big)=1-\gamma^{-1}
    \iff
    \hat\omega_{\pmb\rho,\pmb\sigma}^{(d+1-k)}(\tr\Theta)=\gamma^{-1},
  \end{multline*}
  Taking into account Corollary \ref{cor:weighted_in_terms_of_schmimmerer}, we get in a similar way that
  \[
    \omega_{\pmb\sigma,\pmb\rho}^{(k)}(\Theta)<\gamma
    \iff
    \hat\omega_{\pmb\rho,\pmb\sigma}^{(d+1-k)}(\tr\Theta)>\gamma^{-1},
    \quad
    \omega_{\pmb\sigma,\pmb\rho}^{(k)}(\Theta)>\gamma
    \iff
    \hat\omega_{\pmb\rho,\pmb\sigma}^{(d+1-k)}(\tr\Theta)<\gamma^{-1},
  \]
  Thus, \eqref{eq:omega_times_omega} holds for all possible values of the factors, including zero and $+\infty$.
\end{proof}

\subsection{Concerning inhomogeneous approximation}\label{sec:inhomogeneous}

There is an important class of Diophantine problems that concerns the inhomogeneous setting. Given $\pmb\eta=(\eta_1,\ldots,\eta_n)\in\R^n$, consider the system
\begin{equation}\label{eq:inhomogeneous_system_with_weights}
  \begin{cases}
    |\vec x|_{\pmb\sigma}\leq t \\
    |\Theta\vec x-\vec y-\pmb\eta|_{\pmb\rho}\leq t^{-\gamma}
  \end{cases}
\end{equation}
instead of \eqref{eq:system_with_weights}. The \emph{inhomogeneous weighted Diophantine exponent} $\omega_{\pmb\sigma,\pmb\rho}(\Theta,\pmb\eta)$ (resp. the \emph{inhomogeneous uniform weighted Diophantine exponent} $\hat\omega_{\pmb\sigma,\pmb\rho}(\Theta,\pmb\eta)$) is defined as the supremum of real $\gamma$ such that the system \eqref{eq:inhomogeneous_system_with_weights} admits nonzero solutions in $(\vec x,\vec y)\in\Z^{m+n}$ for some arbitrarily large $t$ (resp. for every $t$ large enough).

In \cite{ghosh_marnat_Pisa_2019} S.\,Chow, A.\,Ghosh et al. proved the following inequalities, the ``non-weighted'' version of which belongs to M.\,Laurent and Y.\,Bugeaud \cite{bugeaud_laurent_2005}:
\begin{equation}\label{eq:weighted_bugeaud_laurent}
  \omega_{\pmb\sigma,\pmb\rho}(\Theta,\pmb\eta)\geq
  \frac1{\hat\omega_{\pmb\rho,\pmb\sigma}(\tr\Theta)},\qquad
  \hat\omega_{\pmb\sigma,\pmb\rho}(\Theta,\pmb\eta)\geq
  \frac1{\omega_{\pmb\rho,\pmb\sigma}(\tr\Theta)},
\end{equation}
where $\hat\omega_{\pmb\rho,\pmb\sigma}(\tr\Theta)$ stands for $\hat\omega_{\pmb\rho,\pmb\sigma}^{(1)}(\tr\Theta)$.

In view of Theorem \ref{t:omega_times_omega} \eqref{eq:weighted_bugeaud_laurent} is equivalent to
\begin{equation}\label{eq:weighted_bugeaud_laurent_reformulated}
  \omega_{\pmb\sigma,\pmb\rho}(\Theta,\pmb\eta)\geq
  \omega_{\pmb\sigma,\pmb\rho}^{(d)}(\Theta),\qquad
  \hat\omega_{\pmb\sigma,\pmb\rho}(\Theta,\pmb\eta)\geq
  \hat\omega_{\pmb\sigma,\pmb\rho}^{(d)}(\Theta).
\end{equation}
In such a form the inequalities are very easy to prove with the help of the classical argument, that goes back to A.\,Ya.\,Khintchine and V.\,Jarn\'ik.
Indeed, if $\omega_{\pmb\sigma,\pmb\rho}^{(d)}(\Theta)>\gamma$ (resp. if $\hat\omega_{\pmb\sigma,\pmb\rho}^{(d)}(\Theta)>\gamma$), then there is a constant $c>0$ such that the parallelepiped $c\cP(s,\gamma)$ (in the notation of the proof of Proposition \ref{prop:weighted_in_terms_of_schmimmerer}) contains a fundamental parallelepiped of $\La$ for some arbitrarily large $s$ (resp. for every $s$ large enough). Therefore, given arbitrary $\pmb\zeta\in\R^d$, the shifted parallelepiped $c\cP(s,\gamma)+\pmb\zeta$ contains a point of $\La$ for some arbitrarily large $s$ (resp. for every $s$ large enough). Hence $\omega_{\pmb\sigma,\pmb\rho}(\Theta,\pmb\eta)\geq\gamma$ (resp. $\hat\omega_{\pmb\sigma,\pmb\rho}(\Theta,\pmb\eta)\geq\gamma$), and \eqref{eq:weighted_bugeaud_laurent_reformulated} follows.

Thus, \eqref{eq:weighted_bugeaud_laurent} is a consequence of Theorem \ref{t:omega_times_omega}.

\section{Lattice exponents and multiparametric geometry on numbers}\label{sec:lattice_exp}

\subsection{Intermediate exponents}\label{sec:lattice_inter_exp}

Let $\La$ be an arbitrary full rank lattice in $\R^d$ of covolume $1$. Let also $\Pi(\cdot)$ be as defined in Section \ref{sec:lattice_exp_intro}.

Same as in the case of Diophantine approximation with weights, let us supplement $\omega(\La)$ with two families of intermediate exponents. For every $\vec v=(v_1,\ldots,v_d)\in\R^d$ let us denote
\[
  \cP(\vec v)=\Big\{ \vec z=(z_1,\ldots,z_d)\in\R^d \ \Big|\ |z_i|\leq|v_i|,\ i=1,\ldots,d \Big\}.
\]

\begin{definition}\label{def:intermediate_ordinary_lattice_exp}
  Let $k$ be an integer, $1\leq k\leq d$.
  We define the $k$-th \emph{Diophantine exponent} $\omega_k(\La)$
  as the supremum of real $\gamma$ such that $\cP(\vec v)$ contains $k$ linearly independent points of $\La$ for some $\vec v\in\R^d$ satisfying $\Pi(\vec v)=|\vec v|^{-\gamma}$ with $|\vec v|$ however large.
\end{definition}

Clearly, $\omega(\La)=\omega_1(\La)$.

\begin{definition}\label{def:intermediate_uniform_lattice_exp}
  Let $k$ be an integer, $1\leq k\leq d$.
  We define the $k$-th \emph{uniform Diophantine exponent} $\hat\omega_k(\La)$
  as the supremum of real $\gamma$ such that $\cP(\vec v)$ contains $k$ linearly independent points of $\La$ for every $\vec v\in\R^d$ satisfying $\Pi(\vec v)=|\vec v|^{-\gamma}$ with $|\vec v|$ large enough.
\end{definition}

Let us interpret the exponents just defined in terms of multiparametric geometry of numbers.

Every norm in $\R^d$ induces a norm in $\cT$. Particularly, the supremum norm $|\cdot|$. As for the functionals induced by $|\cdot|_+$ and $|\cdot|_-$ (defined at the beginning of Section \ref{sec:properties_of_L_and_S}), they are not norms for $d\geq3$, the corresponding ``unit balls'' are simplices and are not symmetric w.r.t. the origin. However, $|\cdot|_+$ cannot be neglected, as it is the image of the supremum norm under the logarithmic mapping: if $\vec z=(z_1,\ldots,z_d)$, $z_i>0$, $i=1,\ldots,d$, and $\vec z_{\log}=(\log z_1,\ldots,\log z_d)$, then
\[\log|\vec z|=|\vec z_{\log}|_+\,.\]
The functional $|\cdot|_+$ will be most important for us in this Section. However, some of the statements we prove are valid for an arbitrary functional one can choose to measure $\pmb\tau$, it only needs to generate an exhaustion of $\cT$.

Let $f$ be an arbitrary non-negative function on $\cT$ such that the sets
\[\Big\{ \pmb\tau\in\cT\, \Big|\,f(\pmb\tau)\leq\lambda \Big\}\]
form a monotone exhaustion of $\cT$.

\begin{definition}\label{def:psi_lattice}
  Given $\La$, $f$, and $k\in\{1,\ldots,d\}$, the quantities
  \[\bpsi_k(\La,f)=\liminf_{|\pmb\tau|\to\infty}\frac{L_k(\La,\pmb\tau)}{f(\pmb\tau)}\,,
    \qquad
    \apsi_k(\La,f)=\limsup_{|\pmb\tau|\to\infty}\frac{L_k(\La,\pmb\tau)}{f(\pmb\tau)}\]
  are called the \emph{Schmidt--Summerer lower and upper exponents of the first type}, and the quantities
  \[\bPsi_k(\La,f)=\liminf_{|\pmb\tau|\to\infty}\frac{S_k(\La,\pmb\tau)}{f(\pmb\tau)},
    \qquad
    \aPsi_k(\La,f)=\limsup_{|\pmb\tau|\to\infty}\frac{S_k(\La,\pmb\tau)}{f(\pmb\tau)}\]
  are called the \emph{Schmidt--Summerer lower and upper exponents of the second type}.
\end{definition}

\begin{proposition} \label{prop:omega_vs_psi}
  Let $f(\pmb\tau)=|\pmb\tau|_+$. Then, for each $k=1,\ldots,d$, we have
  \begin{align}
    \label{eq:omega_geq_-1}
    \omega_k(\La)\geq
    \hat\omega_k(\La)\geq
    -1+\frac1d\,, \\
    \label{eq:psi_geq_-1}
    -1\leq
    \bpsi_k(\La,f)\leq
    \apsi_k(\La,f)\leq
    d-1,
  \end{align}
  and
  \begin{equation}\label{eq:omega_vs_psi}
  \begin{array}{l}
    \big(1+\omega_k(\La)\big)\big(1+\bpsi_k(\La,f)\big)=1, \\
    \big(1+\hat\omega_k(\La)\big)\big(1+\apsi_k(\La,f)\big)=1,
    \vphantom{1^{\big|}}
  \end{array}
  \end{equation}
  assuming that $\omega_k(\La)=+\infty$ whenever $\bpsi_k(\La,f)=-1$, and $\hat\omega_k(\La)=+\infty$ whenever $\apsi_k(\La,f)=-1$.
\end{proposition}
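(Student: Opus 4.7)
The bounds \eqref{eq:psi_geq_-1} are immediate from Proposition \ref{prop:properties_of_L_k}: statements (i) and (ii) give $L_k(\pmb\tau)\geq L_1(\pmb\tau)\geq -|\pmb\tau|_+ - O(1)$, so dividing by $|\pmb\tau|_+$ and passing to the limit yields $\bpsi_k(\La,f)\geq -1$; statements (i) and (iii), combined with \eqref{eq:tau_minus_vs_tau_plus}, give $L_k(\pmb\tau)\leq L_d(\pmb\tau)\leq (d-1)|\pmb\tau|_+ + O(1)$, whence $\apsi_k(\La,f)\leq d-1$.

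For the equalities I would set up the following change of variables. Given $\vec v\in\R^d$ with nonzero coordinates, let $c=\Pi(\vec v)$ and $\tau_i=\log|v_i|-\log c$; then $\pmb\tau\in\cT$ (since $\sum\tau_i=0$) and $\cP(\vec v)=c\,\cB_{\pmb\tau}$. In particular, $\cP(\vec v)$ contains $k$ linearly independent points of $\La$ if and only if $L_k(\La,\pmb\tau)\leq\log c$. A direct computation gives $|\vec v|=c\,e^{|\pmb\tau|_+}$; imposing $\Pi(\vec v)=|\vec v|^{-\gamma}$ with $\gamma>-1$ forces $c=e^{-\gamma|\pmb\tau|_+/(1+\gamma)}$ and $|\vec v|=e^{|\pmb\tau|_+/(1+\gamma)}$, so $|\vec v|\to\infty$ is equivalent to $|\pmb\tau|_+\to\infty$ (equivalently to $|\pmb\tau|\to\infty$, by \eqref{eq:tau_minus_vs_tau_plus}). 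Under this parametrisation the lattice-point condition reads
\[
  \frac{L_k(\La,\pmb\tau)}{|\pmb\tau|_+}\leq -\frac{\gamma}{1+\gamma},
\]
and conversely every $\pmb\tau\in\cT$ with $|\pmb\tau|_+>0$ arises from some $\vec v$ by taking $v_i=c\,e^{\tau_i}$, so the correspondence $\vec v\leftrightarrow\pmb\tau$ is a bijection for each fixed $\gamma>-1$.

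Reading Definitions \ref{def:intermediate_ordinary_lattice_exp}--\ref{def:psi_lattice} through this bijection, and using that $\gamma\mapsto -\gamma/(1+\gamma)$ is a strictly decreasing bijection of $(-1,\infty)$ onto itself, yields $\omega_k(\La)\geq\gamma\iff\bpsi_k(\La,f)\leq -\gamma/(1+\gamma)$, and analogously with $\hat\omega_k$ and $\apsi_k$; these are precisely the identities \eqref{eq:omega_vs_psi}, with the stated conventions handling the degenerate case $\bpsi_k=-1$, $\omega_k=+\infty$. The inequality $\omega_k\geq\hat\omega_k$ is immediate from the definitions (``for every sufficiently large $\vec v$'' implies ``for some $\vec v$ with $|\vec v|$ arbitrarily large''), and substituting $\apsi_k\leq d-1$ into $(1+\hat\omega_k)(1+\apsi_k)=1$ gives $\hat\omega_k\geq -1+1/d$, completing \eqref{eq:omega_geq_-1}.

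The main technical point I anticipate, which I have glossed over, is verifying that the defining supremum for $\omega_k$ is downward-closed, i.e.\ that $\omega_k(\La)\geq\gamma_0$ implies the defining condition holds for every $\gamma\in(-1,\gamma_0)$. This should follow by replacing $\vec v$ (satisfying $\Pi(\vec v)=|\vec v|^{-\gamma_0}$ with $\cP(\vec v)$ containing $k$ linearly independent lattice points) by $\vec v'=\lambda\vec v$ with $\lambda=|\vec v|^{(\gamma_0-\gamma)/(1+\gamma)}>1$: this enlarges $\cP(\vec v)$, hence preserves the lattice-point property, and a direct computation shows that $\Pi(\vec v')=|\vec v'|^{-\gamma}$ with $|\vec v'|\to\infty$ as $|\vec v|\to\infty$. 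An analogous argument handles $\hat\omega_k$.
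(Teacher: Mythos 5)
Your proposal is correct and follows essentially the same route as the paper: the same logarithmic bijection $\vec v\mapsto\pmb\tau(\vec v)$ between $\{\vec v:\Pi(\vec v)=|\vec v|^{-\gamma}\}$ and $\cT$, the same correspondence $\gamma\mapsto\chi(\gamma)=-\gamma/(1+\gamma)$ defined by $(1+\chi)(1+\gamma)=1$, and the same use of statements (ii), (iii) of Proposition \ref{prop:properties_of_L_k} together with \eqref{eq:tau_minus_vs_tau_plus} for the bounds \eqref{eq:psi_geq_-1}, with \eqref{eq:omega_geq_-1} then following from \eqref{eq:omega_vs_psi}. The downward-closedness/rescaling point you flag at the end is sound and is precisely the monotonicity the paper leaves implicit in passing from its reformulated definitions to \eqref{eq:omega_iff_psi}.
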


\begin{proof}
  For each $\gamma\in\R$ let us set
  \[
    \cH_\gamma=\Big\{ \vec v\in\R_{>0}^d \,\Big|\, \Pi(\vec v)=|\vec v|^{-\gamma} \Big\}
  \]
  By Definitions \ref{def:intermediate_ordinary_lattice_exp}, \ref{def:intermediate_uniform_lattice_exp} the exponent $\omega_k(\La)$ (resp. $\hat\omega_k(\La)$) equals the supremum of $\gamma\in\R$ such that $\cP(\vec v)$ contains $k$ linearly independent points of $\La$ for some $\vec v\in\cH_\gamma$ with $|\vec v|$ however large (resp. for every $\vec v\in\cH_\gamma$ with $|\vec v|$ large enough). By Definition \ref{def:psi_lattice} the exponent $\bpsi_k(\La,f)$ (resp. $\apsi_k(\La,f)$) equals the infimum of $\chi\in\R$ such that $e^{f(\pmb\tau)\chi}\cB_{\pmb\tau}$ contains $k$ linearly independent points of $\La$ for some $\pmb\tau\in\cT$ with $f(\pmb\tau)$ however large (resp. for every $\pmb\tau\in\cT$ with $f(\pmb\tau)$ large enough).

  For every $\gamma>-1$ let us define $\chi(\gamma)$ by the relation
  \[
    (1+\chi(\gamma))(1+\gamma)=1.
  \]
  Let us also consider the bijection between $\cH_\gamma$ and $\cT$ determined by
  \[
    \vec v=(v_1,\ldots,v_d)\mapsto\pmb\tau(\vec v)=\Big(\log\big(v_1\big/\Pi(\vec v)\big),\ldots,\log\big(v_d\big/\Pi(\vec v)\big)\Big).
  \]
  Then for every $\vec v\in\cH_\gamma$ with $\gamma>-1$
  \begin{equation}\label{eq:f_chi_Pi}
    f(\pmb\tau(\vec v))\chi(\gamma)=
    \log(\Pi(\vec v)).
  \end{equation}
  Indeed, if $\gamma=0$, then both sides of \eqref{eq:f_chi_Pi} are equal to zero, whereas if $\gamma\neq0$, we have
  \begin{multline*}
    f(\pmb\tau(\vec v))=
    |\pmb\tau(\vec v)|_+=
    \log|\vec v|-\log(\Pi(\vec v))= \\
    =-(1+\gamma^{-1})\log(\Pi(\vec v))=
    \chi(\gamma)^{-1}\log(\Pi(\vec v)).
  \end{multline*}
  It follows from \eqref{eq:f_chi_Pi} that for every $\gamma>-1$ and every $\vec v\in\cH_\gamma$
  \[
    \cP(\vec v)=
    \Pi(\vec v)\cB_{\pmb\tau(\vec v)}=
    e^{f(\pmb\tau(\vec v))\chi(\gamma)}\cB_{\pmb\tau(\vec v)}.
  \]
  Hence, taking into account the reformulation of Definitions \ref{def:intermediate_ordinary_lattice_exp}, \ref{def:intermediate_uniform_lattice_exp}, \ref{def:psi_lattice} given above, we obtain that for every $\gamma>-1$
  \begin{equation}\label{eq:omega_iff_psi}
  \begin{array}{l}
    \omega_k(\La)=\gamma
    \iff
    \bpsi_k(\La,f)=\chi(\gamma), \\
    \hat\omega_k(\La)=\gamma
    \iff
    \apsi_k(\La,f)=\chi(\gamma).
    \vphantom{1^{\big|}}
  \end{array}
  \end{equation}
  Furthermore, it follows from statements \textup{(ii)}, \textup{(iii)} of Proposition \ref{prop:properties_of_L_k}, and relation \eqref{eq:tau_minus_vs_tau_plus} that
  \[
    \begin{array}{l}
      L_1(\pmb\tau)\big/|\pmb\tau|_+
      \geq
      -1+o(1), \\
      L_d(\pmb\tau)\big/|\pmb\tau|_+
      \leq
      |\pmb\tau|_-\big/|\pmb\tau|_++o(1)
      \leq
      d-1+o(1),
      \vphantom{1^{\big|}}
    \end{array}
  \]
  as $|\pmb\tau|\to\infty$. Hence \eqref{eq:psi_geq_-1} immediately follows. Since the correspondence $\gamma\to\chi(\gamma)$ is a bijection between $[-1+1/d,+\infty]$ and $[-1,d-1]$, \eqref{eq:omega_iff_psi} implies both \eqref{eq:omega_vs_psi} and \eqref{eq:omega_geq_-1}.
\end{proof}

\subsection{Intermediate exponents of the second type}\label{sec:lattice_inter_exp_second_type}

In accordance with the notation introduced in Section \ref{sec:weighted_inter_exp_second_type}, let us set for each $\vec v=(v_1,\ldots,v_d)\in\R^d$
\[
  \cP^{[k]}(\vec v)=\Big\{ \vec Z=(Z_{i_1,\ldots,i_k})\in{\textstyle\bigwedge^k}(\R^d) \ \Big|\ |Z_{i_1,\ldots,i_k}|\leq\prod_{j=1}^{k} |v_{i_j}| \Big\}.
\]
Let us also consider the following functionals generalizing both the supremum norm and $\Pi(\cdot)$. Given $\vec v=(v_1,\ldots,v_d)\in\R^d$, set
\[
  |\vec v|^{[k]}=\max_{1\leq i_1<\ldots<i_k\leq d}\prod_{1\leq j\leq k} |v_{i_j}|^{1/k}.
\]
Clearly, $|\vec v|^{[1]}=|\vec v|$ and $|\vec v|^{[d]}=\Pi(\vec v)$.

\begin{definition}\label{def:intermediate_ordinary_lattice_exp_second_type}
  Let $k$ be an integer, $1\leq k\leq d$.
  We define the $k$-th \emph{Diophantine exponent} $\Omega_k(\La)$ \emph{of the second type} as the supremum of real $\gamma$ such that $\cP^{[k]}(\vec v)$ contains a nonzero element of $\bigwedge^k(\La)$ for some $\vec v\in\R_{>0}^d$ satisfying $\Pi(\vec v)=\big(|\vec v|^{[k]}\big)^{-\gamma}$ with $|\vec v|$ however large.
\end{definition}

\begin{definition}\label{def:intermediate_uniform_lattice_exp_second_type}
  Let $k$ be an integer, $1\leq k\leq d$.
  We define the $k$-th \emph{uniform Diophantine exponent} $\hat\Omega_k(\La)$ \emph{of the second type} as the supremum of real $\gamma$ such that $\cP^{[k]}(\vec v)$ contains a nonzero element of $\bigwedge^k(\La)$ for every $\vec v\in\R_{>0}^d$ satisfying $\Pi(\vec v)=\big(|\vec v|^{[k]}\big)^{-\gamma}$ with $|\vec v|$ large enough.
\end{definition}

Clearly, $\Omega_1(\La)=\omega_1(\La)=\omega(\La)$.

Furthermore, since $\det\big(\bigwedge^k(\La)\big)=1$ and $\vol\big(\cP^{[k]}(\vec v)\big)=2^{\binom dk}$, provided $\Pi(\vec v)=1$, it follows by Minkowski's convex body theorem that
\begin{equation}\label{eq:Omega_geq_0}
  \Omega_k(\La)\geq\hat\Omega_k(\La)\geq0.
\end{equation}
Moreover, for $k=d$ the exponents degenerate: we have
\begin{equation*}
  \Omega_d(\La)=\hat\Omega_d(\La)=0.
\end{equation*}

In order to prove an analogue of Proposition \ref{prop:omega_vs_psi} for the exponents of the second type, let us consider the image of $|\cdot|^{[k]}$ under the logarithmic mapping. Let us set
\[
  |\pmb\tau|_+^{[k]}=
  \max_{1\leq i_1<\ldots<i_k\leq d}
  \frac{\pmb\tau_{i_1}+\ldots+\pmb\tau_{i_k}}{k}.
\]
Clearly, $|\pmb\tau|_+^{[1]}=|\pmb\tau|_+$ and $|\pmb\tau|_+^{[d]}=0$ for every $\pmb\tau\in\cT$. It is also clear that each $|\cdot|_+^{[k]}$ with $k\in\{1,\ldots,d-1\}$ generates an exhaustion of $\cT$, whereas $|\cdot|_+^{[d]}$ does not.

\begin{proposition} \label{prop:Omega_vs_Psi}
  Let $k\in\{1,\ldots,d-1\}$ and let $f(\pmb\tau)=|\pmb\tau|_+^{[k]}$. Then, along with \eqref{eq:Omega_geq_0}, we have
  \begin{equation*}
    -k\leq
    \bPsi_k(\La,f)\leq
    \aPsi_k(\La,f)\leq0
  \end{equation*}
  and
  \begin{equation*}
  \begin{array}{l}
    \big(1+\Omega_k(\La)\big)
    \bigg(1+\dfrac{\bPsi_k(\La,f)}{k}\bigg)=1, \\
    \big(1+\hat\Omega_k(\La)\big)
    \bigg(1+\dfrac{\aPsi_k(\La,f)}{k}\bigg)=1,
    \vphantom{1^{\big|}}
  \end{array}
  \end{equation*}
  assuming that $\Omega_k(\La)=+\infty$ whenever $\bPsi_k(\La,f)=-k$, and $\hat\Omega_k(\La)=+\infty$ whenever $\aPsi_k(\La,f)=-k$.
\end{proposition}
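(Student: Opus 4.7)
The plan is to mimic the proof of Proposition \ref{prop:omega_vs_psi}, working in the $k$-th exterior power $\bigwedge^k(\R^d)$ and using Mahler's theorem on compound bodies as the bridge. The latter yields
\[
  \log\lambda_1\bigl(\cB_{\pmb\tau}^{[k]},{\textstyle\bigwedge^k}(\La)\bigr)=S_k(\La,\pmb\tau)+O(1),
\]
with the implied constant depending only on $d$. For $\gamma>-1$ set $\chi(\gamma)=-k\gamma/(1+\gamma)$, so that $(1+\chi(\gamma)/k)(1+\gamma)=1$.

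The same bijection $\vec v\mapsto\pmb\tau(\vec v)=\bigl(\log(v_1/\Pi(\vec v)),\ldots,\log(v_d/\Pi(\vec v))\bigr)$ used in Proposition \ref{prop:omega_vs_psi} gives directly
\[
  \cP^{[k]}(\vec v)=\Pi(\vec v)^k\cdot\cB_{\pmb\tau(\vec v)}^{[k]}
  \qquad\text{and}\qquad
  \log|\vec v|^{[k]}=\log\Pi(\vec v)+|\pmb\tau(\vec v)|_+^{[k]}.
\]
On the constraint surface $\Pi(\vec v)=\bigl(|\vec v|^{[k]}\bigr)^{-\gamma}$ the second identity produces $\log|\vec v|^{[k]}=f(\pmb\tau)/(1+\gamma)$ and $\log\Pi(\vec v)=-\gamma f(\pmb\tau)/(1+\gamma)$, whence
\[
  \cP^{[k]}(\vec v)=e^{\chi(\gamma)f(\pmb\tau(\vec v))}\cB_{\pmb\tau(\vec v)}^{[k]}.
\]
Combined with Mahler's estimate, the requirement that $\cP^{[k]}(\vec v)$ contain a nonzero element of $\bigwedge^k(\La)$ becomes $S_k(\La,\pmb\tau)\leq\chi(\gamma)f(\pmb\tau)+O(1)$; the $O(1)$ is absorbed in the liminf/limsup, and the two desired equalities follow after dividing by $k$ and reformulating Definitions \ref{def:intermediate_ordinary_lattice_exp_second_type}, \ref{def:intermediate_uniform_lattice_exp_second_type} in terms of $\bPsi_k,\aPsi_k$.

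The bounds come from parametric estimates already at hand: statement \textup{(iii)} of Proposition \ref{prop:essence_of_transference_split_up} gives $S_k(\La,\pmb\tau)/k\leq S_d(\La,\pmb\tau)/d=O(1)$, hence $\aPsi_k(\La,f)\leq0$; and the inclusion $\cB_{\pmb\tau}^{[k]}\subset e^{kf(\pmb\tau)}\cB^{[k]}$, where $\cB^{[k]}$ denotes the unit cube in $\bigwedge^k(\R^d)$, combined with Mahler's estimate yields $S_k(\La,\pmb\tau)\geq -kf(\pmb\tau)+O(1)$, so $\bPsi_k(\La,f)\geq -k$.

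The main technical obstacle is verifying that on the constraint surface $\Pi(\vec v)=\bigl(|\vec v|^{[k]}\bigr)^{-\gamma}$ the condition ``$|\vec v|$ arbitrarily large'' is equivalent to ``$f(\pmb\tau(\vec v))$ arbitrarily large''. Since $|\vec v|^{[k]}\leq|\vec v|$, one direction is immediate from $\log|\vec v|^{[k]}=f(\pmb\tau)/(1+\gamma)$. For the converse, the key is the comparison $|\pmb\tau|_+^{[k]}\geq\tfrac{d-k}{k(d-1)}|\pmb\tau|_+$ valid for $k<d$, which follows from $\tau_1+\ldots+\tau_d=0$ by observing that the top-$k$ sum equals minus the bottom-$(d-k)$ sum while the average of the bottom $d-k$ coordinates is bounded above by the $k$-th largest coordinate. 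Consequently $f(\pmb\tau)=|\pmb\tau|_+^{[k]}$ generates an exhaustion of $\cT$ comparable to $|\pmb\tau|_+$, so ``$|\vec v|\to\infty$'' on the constraint surface is equivalent to ``$f(\pmb\tau)\to\infty$''.
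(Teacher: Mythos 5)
Your proof is correct and follows essentially the same route as the paper's: Mahler's compound-body estimate identifying $S_k(\La,\pmb\tau)$ with $\log\lambda_1\big(\cB^{[k]}_{\pmb\tau},{\textstyle\bigwedge^k}(\La)\big)$ up to $O(1)$, the logarithmic bijection $\vec v\mapsto\pmb\tau(\vec v)$, and the rescaling identity $\cP^{[k]}(\vec v)=e^{\chi(\gamma)f(\pmb\tau(\vec v))}\cB^{[k]}_{\pmb\tau(\vec v)}$ on the surface $\Pi(\vec v)=\big(|\vec v|^{[k]}\big)^{-\gamma}$. The only (harmless) deviations are that you derive the bounds $-k\leq\bPsi_k(\La,f)\leq\aPsi_k(\La,f)\leq0$ by direct inclusions and Proposition \ref{prop:essence_of_transference_split_up} rather than from Minkowski's theorem and \eqref{eq:Omega_geq_0}, and that you spell out the comparison $|\pmb\tau|_+^{[k]}\geq\frac{d-k}{k(d-1)}|\pmb\tau|_+$ guaranteeing the equivalence of the largeness conditions, a point the paper treats as clear.
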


\begin{proof}
  It follows from Mahler’s theory of compound bodies that
  \[
    \lambda_1\Big(\cB^{[k]}_{\pmb\tau},{\textstyle\bigwedge^k}(\La)\Big)
    \asymp
    \prod_{i=1}^{k}\lambda_i\big(\cB_{\pmb\tau},\La\big)
  \]
  with the implied constant depending only on d.

  Thus, the exponent $\bPsi_k(\La,f)$ (resp. $\aPsi_k(\La,f)$) equals the infimum of $\chi\in\R$ such that $e^{f(\pmb\tau)\chi}\cB^{[k]}_{\pmb\tau}$ contains a nonzero element of $\bigwedge^k(\La)$ for some arbitrarily large $|\pmb\tau|$ (resp. for every $|\pmb\tau|$ large enough). Since $\det\big(\bigwedge^k(\La)\big)=1$, it follows by Minkowski's convex body theorem that $\bPsi_k(\La,f)\leq\aPsi_k(\La,f)\leq0$.

  Following the ideas of the proof of Proposition \ref{prop:omega_vs_psi}, let us set for each $\gamma\geq0$
  \[
    \cH_\gamma=\Big\{ \vec v\in\R_{>0}^d \,\Big|\, \Pi(\vec v)=\big(|\vec v|^{[k]}\big)^{-\gamma} \Big\}
  \]
  Since $1\leq k\leq d-1$, it is easily verified that for each $\vec v\in\R^d$ with nonzero $\Pi(\vec v)$ there is a unique positive $\lambda$ such that $\Pi(\lambda\vec v)=\big(|\lambda\vec v|^{[k]}\big)^{-\gamma}$.
  By Definitions \ref{def:intermediate_ordinary_lattice_exp_second_type}, \ref{def:intermediate_uniform_lattice_exp_second_type} the exponent $\Omega_k(\La)$ (resp. $\hat\Omega_k(\La)$) equals the supremum of $\gamma\in\R$ such that $\cP^{[k]}(\vec v)$ contains a nonzero element of $\bigwedge^k(\La)$ for some $\vec v\in\cH_\gamma$ with $|\vec v|$ however large (resp. for every $\vec v\in\cH_\gamma$ with $|\vec v|$ large enough).

  For every $\gamma\geq0$ let us define $\chi(\gamma)$ by the relation
  \[
    (1+\chi(\gamma))(1+\gamma)=1.
  \]
  Let us also consider the same bijection between $\cH_\gamma$ and $\cT$, as in the proof of Proposition \ref{prop:omega_vs_psi}, i.e.
  \[
    \vec v=(v_1,\ldots,v_d)\mapsto\pmb\tau(\vec v)=\Big(\log\big(v_1\big/\Pi(\vec v)\big),\ldots,\log\big(v_d\big/\Pi(\vec v)\big)\Big).
  \]
  Then, for each $\gamma>0$,
  \begin{multline*}
    f(\pmb\tau(\vec v))=
    |\pmb\tau(\vec v)|_+^{[k]}=
    \log|\vec v^{[k]}|-\log(\Pi(\vec v))= \\
    =-(1+\gamma^{-1})\log(\Pi(\vec v))=
    \chi(\gamma)^{-1}\log(\Pi(\vec v)).
  \end{multline*}
  Hence, same as \eqref{eq:f_chi_Pi}, we have for each $\gamma\geq0$
  \[
    f(\pmb\tau(\vec v))\chi(\gamma)=
    \log(\Pi(\vec v)).
  \]
  Thus,
  \[
    \cP^{[k]}(\vec v)=
    \Pi(\vec v)^k\cB_{\pmb\tau(\vec v)}^{[k]}=
    e^{kf(\pmb\tau(\vec v))\chi(\gamma)}\cB_{\pmb\tau(\vec v)}^{[k]}.
  \]
  Therefore, for every $\gamma\geq0$
  \begin{equation*}
  \begin{array}{l}
    \Omega_k(\La)=\gamma
    \iff
    \bPsi_k(\La,f)=k\chi(\gamma), \\
    \hat\Omega_k(\La)=\gamma
    \iff
    \aPsi_k(\La,f)=k\chi(\gamma).
    \vphantom{1^{\big|}}
  \end{array}
  \end{equation*}
  It remains to make use of \eqref{eq:Omega_geq_0}.
\end{proof}

\subsection{Application of general theory and splitting Theorem \ref{t:lattice_transference}}\label{sec:application_lattice_exp}

Let us set
\[f^\ast(\pmb\tau)=f(-\pmb\tau).\]
Clearly, $|\pmb\tau|\to\infty$ if and only if $f(\pmb\tau)\to\infty$. Hence, dividing all the relations provided by Proposition \ref{prop:essence_of_transference_split_up} and Corollary \ref{cor:essence_of_transference} by $f(\pmb\tau)$ and sending $\pmb\tau$ to infinity, we get the following statements on the Schmidt--Summerer exponents, analogous to Proposition \ref{prop:essence_of_transference_split_up_weighted} and Corollary \ref{cor:essence_of_transference_weighted}.

\begin{proposition} \label{prop:essence_of_transference_split_up_lattice_exp}
  Given $\La$ and $f$, we have

  \textup{(i)}
  $\bpsi_k(\La,f)=-\apsi_{d+1-k}(\La^\ast,f^\ast)$, \vphantom{$\frac{\big|}{|}$}

  \phantom{\textup{(i)}}
  $\apsi_k(\La,f)=-\bpsi_{d+1-k}(\La^\ast,f^\ast);\ \ k=1,\ldots,d$; \vphantom{$\Big|$}

  \textup{(ii)}
  $\bPsi_k(\La,f)=\bPsi_{d-k}(\La^\ast,f^\ast)$,
  \vphantom{$\frac{\big|}{|}$}

  \phantom{\textup{(ii)}}
  $\aPsi_k(\La,f)=\aPsi_{d-k}(\La^\ast,f^\ast),\ \ k=1,\ldots,d-1$; \vphantom{$\Big|$}

  \textup{(iii)}
  $
    \displaystyle
    \bPsi_1(\La,f)
    \leq\ldots\leq
    \frac{\bPsi_k(\La,f)}{k}
    \leq\ldots\leq
    \frac{\bPsi_{d-1}(\La,f)}{d-1}
    \leq
    \frac{\bPsi_d(\La,f)}{d}
    =0
  $,

  \phantom{\textup{(iii)}}
  $
    \displaystyle
    \aPsi_1(\La,f)
    \leq\ldots\leq
    \frac{\aPsi_k(\La,f)}{k}
    \leq\ldots\leq
    \frac{\aPsi_{d-1}(\La,f)}{d-1}
    \leq
    \frac{\aPsi_d(\La,f)}{d}
    =0
  $;
  \vphantom{$\bigg|$}

  \textup{(iv)}
  $
    \displaystyle
    \frac{\bPsi_1(\La,f)}{d-1}
    \geq\ldots\geq
    \frac{\bPsi_k(\La,f)}{d-k}
    \geq\ldots\geq
    \bPsi_{d-1}(\La,f)
  $,
  \vphantom{$\bigg|$}

  \phantom{\textup{(iv)}}
  $
    \displaystyle
    \frac{\aPsi_1(\La,f)}{d-1}
    \geq\ldots\geq
    \frac{\aPsi_k(\La,f)}{d-k}
    \geq\ldots\geq
    \aPsi_{d-1}(\La,f)
  $.
\end{proposition}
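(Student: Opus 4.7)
The plan is to derive all four statements directly from Proposition \ref{prop:essence_of_transference_split_up}, treating them as pointwise inequalities and equalities in $\pmb\tau$, and then dividing everything by $f(\pmb\tau)$ before passing to $\liminf$ and $\limsup$ as $|\pmb\tau|\to\infty$. The decisive observation is that, since the sublevel sets $\{\pmb\tau\in\cT\mid f(\pmb\tau)\leq\lambda\}$ form an exhaustion of $\cT$, one has $f(\pmb\tau)\to\infty$ whenever $|\pmb\tau|\to\infty$; therefore every $O(1)$ error from Proposition \ref{prop:essence_of_transference_split_up}, after division by $f(\pmb\tau)$, disappears in the limit.

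For statement \textup{(i)}, I would divide the identity $L_k(\La,\pmb\tau)=-L_{d+1-k}(\La^\ast,-\pmb\tau)+O(1)$ by $f(\pmb\tau)$ and then substitute $\pmb\tau\mapsto-\pmb\tau$ on the right-hand side. Using $f(\pmb\tau)=f^\ast(-\pmb\tau)$ together with the obvious fact that $|\pmb\tau|\to\infty$ iff $|-\pmb\tau|\to\infty$, the right-hand ratio becomes $-L_{d+1-k}(\La^\ast,\pmb\tau')/f^\ast(\pmb\tau')$ with $\pmb\tau'\to\infty$. Since $\liminf(-g)=-\limsup g$, taking $\liminf$ on the left yields $\bpsi_k(\La,f)=-\apsi_{d+1-k}(\La^\ast,f^\ast)$, and exchanging the roles of $\liminf$ and $\limsup$ gives the companion equality. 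Statement \textup{(ii)} is obtained by the same substitution applied to $S_k(\La,\pmb\tau)=S_{d-k}(\La^\ast,-\pmb\tau)+O(1)$; there is no sign flip, so both $\liminf$ and $\limsup$ transfer cleanly and produce $\bPsi_k(\La,f)=\bPsi_{d-k}(\La^\ast,f^\ast)$ and the analogue for $\aPsi$.

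For statements \textup{(iii)} and \textup{(iv)}, I would divide the chains of Proposition \ref{prop:essence_of_transference_split_up}\textup{(iii)--(iv)} by $f(\pmb\tau)$ pointwise. Since each inequality holds for every $\pmb\tau\in\cT$, it is preserved under both $\liminf$ and $\limsup$, immediately yielding the corresponding chains between the $\bPsi_k/k$ (resp.\ $\aPsi_k/k$) and between the $\bPsi_k/(d-k)$ (resp.\ $\aPsi_k/(d-k)$). The terminal equality $\bPsi_d(\La,f)/d=\aPsi_d(\La,f)/d=0$ follows from $S_d(\La,\pmb\tau)=O(1)$ divided by $f(\pmb\tau)\to\infty$.

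The argument is essentially formal, and the only point that requires attention is the sign change in \textup{(i)}--\textup{(ii)}: one must remember to replace $-\pmb\tau$ by a new variable and simultaneously convert $f(\pmb\tau)$ into $f^\ast(\pmb\tau')$, which is precisely why $f^\ast$ was defined at the beginning of Section \ref{sec:application_lattice_exp}. Beyond this bookkeeping, no genuine obstacle is expected.
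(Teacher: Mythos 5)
Your proposal is correct and matches the paper's own argument: the paper likewise obtains this proposition by dividing the relations of Proposition \ref{prop:essence_of_transference_split_up} by $f(\pmb\tau)$, using that $f(\pmb\tau)\to\infty$ as $|\pmb\tau|\to\infty$ so the $O(1)$ terms vanish, and passing to $\liminf$/$\limsup$ with the substitution $\pmb\tau\mapsto-\pmb\tau$ encoded in $f^\ast$. No gaps; your handling of the sign flip in \textup{(i)}--\textup{(ii)} is exactly the intended bookkeeping.
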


\begin{corollary} \label{cor:essence_of_transference_lattice_exp}
  $
    \bPsi_1(\La,f)
    \leq
    \dfrac{\bPsi_1(\La^\ast,f^\ast)}{d-1}
  $.
\end{corollary}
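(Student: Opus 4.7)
The plan is to mimic the derivation of Corollary \ref{cor:essence_of_transference_weighted} from Proposition \ref{prop:essence_of_transference_split_up_weighted}, since Proposition \ref{prop:essence_of_transference_split_up_lattice_exp} is designed to play the same role in the lattice exponent setting. The argument amounts to a two-step chain that combines statements (ii) and (iv) of that proposition.

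First I would apply statement (ii) with $k=1$ to rewrite the left-hand side of the desired inequality as
\[
  \bPsi_1(\La,f) = \bPsi_{d-1}(\La^\ast,f^\ast).
\]
Next I would invoke statement (iv) of Proposition \ref{prop:essence_of_transference_split_up_lattice_exp}, applied now to the lattice $\La^\ast$ with the exhaustion function $f^\ast$, and read off the two extreme terms of the decreasing chain, obtaining
\[
  \bPsi_{d-1}(\La^\ast,f^\ast) \leq \frac{\bPsi_1(\La^\ast,f^\ast)}{d-1}.
\]
Concatenating these two relations gives exactly the claim.

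As a sanity check (or as an alternative proof), one can pass to the limit directly in Corollary \ref{cor:essence_of_transference}. Dividing the inequality $S_1(\La,\pmb\tau) \leq S_1(\La^\ast,-\pmb\tau)/(d-1) + O(1)$ by $f(\pmb\tau)$ and taking $\liminf$ as $|\pmb\tau|\to\infty$, the change of variables $\pmb\tau'=-\pmb\tau$ turns $S_1(\La^\ast,-\pmb\tau)/f(\pmb\tau)$ into $S_1(\La^\ast,\pmb\tau')/f^\ast(\pmb\tau')$, whose $\liminf$ is $\bPsi_1(\La^\ast,f^\ast)$; the error term $O(1)/f(\pmb\tau)$ vanishes because the exhaustion property of $f$ forces $f(\pmb\tau)\to\infty$ together with $|\pmb\tau|$.

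There is no real obstacle to this argument: the substantive content is already packaged in Proposition \ref{prop:essence_of_transference_split_up_lattice_exp}, which itself rests on Minkowski's second theorem and Mahler's compound-body inequalities through Proposition \ref{prop:essence_of_transference_split_up}. The only small thing to check along the way is that $f^\ast$ still generates a monotone exhaustion of $\cT$, so that the exponents $\bPsi_k(\La^\ast,f^\ast)$ are well defined in the sense of Definition \ref{def:psi_lattice}; this is immediate since $\pmb\tau\mapsto -\pmb\tau$ is a homeomorphism of $\cT$ preserving the condition $|\pmb\tau|\to\infty$.
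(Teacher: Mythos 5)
Your proposal is correct and matches the paper: the corollary is obtained there precisely by your ``sanity check'' route, i.e.\ by dividing the relation of Corollary \ref{cor:essence_of_transference} by $f(\pmb\tau)$ and letting $|\pmb\tau|\to\infty$ (the paper states this once for Proposition \ref{prop:essence_of_transference_split_up_lattice_exp} and the corollary together, using $f(\pmb\tau)=f^\ast(-\pmb\tau)$ and $O(1)/f(\pmb\tau)\to0$ exactly as you do). Your primary chain $\bPsi_1(\La,f)=\bPsi_{d-1}(\La^\ast,f^\ast)\leq\bPsi_1(\La^\ast,f^\ast)/(d-1)$, via statements \textup{(ii)} and \textup{(iv)} of Proposition \ref{prop:essence_of_transference_split_up_lattice_exp} applied to $(\La^\ast,f^\ast)$, is an equivalent repackaging of the same machinery (the paper later notes the corollary is split by statements \textup{(ii)} and \textup{(iii)}), and the only hypothesis worth checking there --- that $f^\ast$ again generates a monotone exhaustion of $\cT$ --- is the one you verified.
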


As in the case of Diophantine approximation with weights, we claim that Corollary \ref{cor:essence_of_transference_lattice_exp} implies Theorem \ref{t:lattice_transference}. To show this, we need the following simple observation.

\begin{lemma}\label{l:f_ast_vs_f}
  Let $f(\pmb\tau)=|\pmb\tau|_+$. Then, for each $k\in\{1,\ldots,d\}$, the signs of $\bpsi_k(\La^\ast,f)$ and $\bpsi_k(\La^\ast,f^\ast)$ coincide, as well as do the signs of $\apsi_k(\La^\ast,f)$ and $\apsi_k(\La^\ast,f^\ast)$. Furthermore, we have
  \begin{equation}\label{eq:f_ast_vs_f}
  \begin{array}{l}
    \dfrac{|\bpsi_k(\La^\ast,f)|}{d-1}\leq
    |\bpsi_k(\La^\ast,f^\ast)|\leq
    (d-1)|\bpsi_k(\La^\ast,f)|, \\
    \dfrac{|\apsi_k(\La^\ast,f)|}{d-1}\leq
    |\apsi_k(\La^\ast,f^\ast)|\leq
    (d-1)|\apsi_k(\La^\ast,f)|.
    \vphantom{1^{\Big|}}
  \end{array}
  \end{equation}
\end{lemma}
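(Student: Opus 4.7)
The plan is to reduce everything to a pointwise bound that squeezes $h := L_k(\La^\ast,\pmb\tau)/f^\ast(\pmb\tau)$ between two monotone continuous functions of $g := L_k(\La^\ast,\pmb\tau)/f(\pmb\tau)$, and then pass this squeeze through $\liminf$ and $\limsup$.

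First, I would invoke \eqref{eq:tau_minus_vs_tau_plus}, which translates immediately to $f(\pmb\tau)/(d-1) \leq f^\ast(\pmb\tau) \leq (d-1) f(\pmb\tau)$, since $f^\ast = f(-\,\cdot\,) = |\cdot|_-$. Since both $f$ and $f^\ast$ are strictly positive away from the origin, the ratio $r(\pmb\tau) = f(\pmb\tau)/f^\ast(\pmb\tau)$ takes values in the interval $[1/(d-1),\,d-1]$, and the identity $h = g \cdot r$ yields the pointwise two-sided bound
\[
  \phi(g(\pmb\tau)) \leq h(\pmb\tau) \leq \psi(g(\pmb\tau)),
  \qquad
  \phi(x) = \min\!\Big(\tfrac{x}{d-1},(d-1)x\Big),\ \psi(x) = \max\!\Big(\tfrac{x}{d-1},(d-1)x\Big).
\]
In particular $g$ and $h$ have the same sign at every point.

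The second step is to note that both $\phi$ and $\psi$ are continuous and monotone nondecreasing on $\R$: explicitly, $\phi(x)=(d-1)x$ for $x\le 0$ and $\phi(x)=x/(d-1)$ for $x\ge 0$, and symmetrically for $\psi$. Such continuous monotone functions commute with $\liminf$ and $\limsup$, by the elementary fact that they preserve infima and suprema on every set (the nontrivial bound using continuity, the other using monotonicity). Setting $a = \bpsi_k(\La^\ast,f) = \liminf g$ and $b = \bpsi_k(\La^\ast,f^\ast) = \liminf h$ and passing to $\liminf$ in the pointwise bound gives $\phi(a)\le b\le\psi(a)$; the analogous argument with $\limsup$ handles the upper exponents.

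A sign analysis then closes the lemma: if $a>0$ we read off $a/(d-1) \leq b \leq (d-1)a$, so both exponents are positive and \eqref{eq:f_ast_vs_f} holds; if $a<0$ we get $(d-1)a \leq b \leq a/(d-1)$, giving coincidence of signs and the claimed bounds upon taking absolute values; and if $a=0$ then $\phi(0)=\psi(0)=0$ forces $b=0$. I do not foresee any real obstacle---the only delicate point is that the direction of multiplication by $r$ flips with the sign of $g$, which is precisely what the envelopes $\phi$ and $\psi$ are designed to absorb in a single pointwise inequality.
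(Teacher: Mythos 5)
Your proof is correct and follows essentially the same route as the paper: the only real input is the comparison $f(\pmb\tau)/(d-1)\leq f^\ast(\pmb\tau)\leq(d-1)f(\pmb\tau)$ coming from \eqref{eq:tau_minus_vs_tau_plus}, combined with a sign split when passing to $\liminf$ and $\limsup$. Your monotone continuous envelopes $\phi,\psi$ are just a tidy way of making explicit the limit-passing step that the paper compresses into ``this immediately implies \eqref{eq:f_ast_vs_f}''.
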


\begin{proof}
  The sign of $\bpsi_k(\La^\ast,f)$ (resp. $\apsi_k(\La^\ast,f)$) does not depend on the choice of $f$, it depends only on whether $L_k(\pmb\tau)\geq0$ for some $\pmb\tau$ with $|\pmb\tau|$ however large (resp. for every $\pmb\tau$ with $|\pmb\tau|$ large enough).

  Furthermore, it follows from \eqref{eq:tau_minus_vs_tau_plus} that
  \begin{equation}\label{eq:f_ast_vs_f_reason_for_inequality}
    \frac{f(\pmb\tau)}{d-1}\leq f^\ast(\pmb\tau)\leq(d-1)f(\pmb\tau).
  \end{equation}
  This immediately implies \eqref{eq:f_ast_vs_f}.
\end{proof}

\begin{theorem}\label{t:lattice_transference_schmimmerered}
  Let $f(\pmb\tau)=|\pmb\tau|_+$. Then
  \[
    \bpsi_1(\La,f)\leq
    \dfrac{\bpsi_1(\La^\ast,f)}{(d-1)^2}.
  \]
\end{theorem}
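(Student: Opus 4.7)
The proof is a direct assembly of three pieces already established in the excerpt, so the plan is mainly to chain them correctly while keeping track of signs.

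The first step is a triviality worth stating explicitly: since $S_1(\La,\pmb\tau)=L_1(\La,\pmb\tau)$ by definition, we have $\bPsi_1(\La,f)=\bpsi_1(\La,f)$ for every lattice and every admissible $f$. Applied to both $\La$ and $\La^\ast$, this lets us translate Corollary \ref{cor:essence_of_transference_lattice_exp} into the inequality
\[
  \bpsi_1(\La,f)\leq\frac{\bpsi_1(\La^\ast,f^\ast)}{d-1},
\]
which is the transference core of the argument. I would present this as the first of two inequalities to be proved.

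The second step is to convert $f^\ast$ into $f$ on the right-hand side using Lemma \ref{l:f_ast_vs_f}. Here sign-tracking is the only delicate point, so it is the step I expect to be the main (albeit very mild) obstacle. By Proposition \ref{prop:properties_of_L_k}(ii), $L_1(\La^\ast,\pmb\tau)\leq0$ and $f(\pmb\tau)>0$ for $\pmb\tau\neq0$ (note $f(\pmb\tau)=|\pmb\tau|_+\geq|\pmb\tau|/(d-1)$ by \eqref{eq:tau_minus_vs_tau_plus}, so $f(\pmb\tau)\to\infty$ as $|\pmb\tau|\to\infty$), hence $\bpsi_1(\La^\ast,f)\leq 0$ and, by the sign-coincidence part of Lemma \ref{l:f_ast_vs_f}, also $\bpsi_1(\La^\ast,f^\ast)\leq 0$. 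With both exponents non-positive, the inequality $|\bpsi_1(\La^\ast,f)|/(d-1)\leq|\bpsi_1(\La^\ast,f^\ast)|$ of Lemma \ref{l:f_ast_vs_f} becomes
\[
  \bpsi_1(\La^\ast,f^\ast)\leq\frac{\bpsi_1(\La^\ast,f)}{d-1}.
\]

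The third and final step is to combine the two displayed inequalities. Dividing the latter by $d-1$ and substituting into the former yields
\[
  \bpsi_1(\La,f)\leq\frac{\bpsi_1(\La^\ast,f^\ast)}{d-1}\leq\frac{\bpsi_1(\La^\ast,f)}{(d-1)^2},
\]
which is exactly the desired conclusion. The only subtlety is the direction reversal when passing from absolute values to signed values in the second step; apart from that, no new ideas beyond the quoted corollary and lemma are needed.
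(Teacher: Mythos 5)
Your proposal is correct and takes essentially the same route as the paper: identify $\bpsi_1$ with $\bPsi_1$, invoke Corollary \ref{cor:essence_of_transference_lattice_exp}, and pass from $f^\ast$ to $f$ via Lemma \ref{l:f_ast_vs_f} using nonpositivity of the exponents. The only (immaterial) difference is that you derive nonpositivity directly from $L_1\leq0$ and the sign-coincidence clause, whereas the paper cites statement (iii) of Proposition \ref{prop:essence_of_transference_split_up_lattice_exp}.
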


\begin{proof}
  The exponents $\bpsi_1$ and $\bPsi_1$ are the same. By statement \textup{(iii)} of Proposition \ref{prop:essence_of_transference_split_up_lattice_exp} they are nonpositive. It remains to apply Lemma \ref{l:f_ast_vs_f} with $k=1$ and Corollary \ref{cor:essence_of_transference_lattice_exp}.
\end{proof}

Similar to the case of Diophantine approximation with weights, Theorem \ref{t:lattice_transference_schmimmerered} is a reformulation of Theorem \ref{t:lattice_transference}, due to Proposition \ref{prop:omega_vs_psi} and the observation that for nonzero $x,y\in\R$ the relation $(1+x)(1+y)=1$ is equivalent to $x^{-1}+y^{-1}+1=0$. The key ingredient in the proof of Theorem \ref{t:lattice_transference_schmimmerered} is Corollary \ref{cor:essence_of_transference_lattice_exp}. In its turn, Corollary \ref{cor:essence_of_transference_lattice_exp} gets split by statements \textup{(ii)} and \textup{(iii)} of Proposition \ref{prop:essence_of_transference_split_up_lattice_exp} into a sequence of inequalities between the Schmidt--Summerer exponents $\bPsi_k(\La,f)$. We should note, however, that such a splitting involves only $f=|\cdot|_+$ and thus cannot be immediately interpreted as a result concerning lattice exponents of the second type. In order to perform such a result, let us make some observations concerning the functionals $|\cdot|^{[k]}_+$.

\begin{lemma} \label{l:k-th_functional_vs_(d-k)-th}
  For each $\pmb\tau\in\cT$ and each $k=1,\ldots,d-1$ we have
  \[
    |\pmb\tau|_+^{[k]}=\frac{d-k}{k}\cdot|-\pmb\tau|_+^{[d-k]}.
  \]
  In other words, if $f=|\cdot|_+^{[k]}$, then $f^\ast=\dfrac{d-k}{k}|\cdot|_+^{[d-k]}$.
\end{lemma}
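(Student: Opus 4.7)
The plan is a one-line complement argument, exploiting the fact that $\pmb\tau\in\cT$ forces $\tau_1+\ldots+\tau_d=0$, so the sum of entries over any index set equals minus the sum over the complementary index set.

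First I would unfold the definition, writing
\[
  |\pmb\tau|_+^{[k]}=\max_{|I|=k}\frac{1}{k}\sum_{i\in I}\tau_i,
\]
where the maximum is taken over $k$-element subsets $I\subseteq\{1,\ldots,d\}$. Since $\sum_{i=1}^d\tau_i=0$, for each such $I$ and its complement $J=\{1,\ldots,d\}\setminus I$ (which has size $d-k$) we have
\[
  \sum_{i\in I}\tau_i=-\sum_{j\in J}\tau_j=\sum_{j\in J}(-\tau_j).
\]

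Next I would substitute this into the definition and rewrite the max over $k$-subsets $I$ as a max over $(d-k)$-subsets $J$ via the bijection $I\mapsto J=I^c$:
\[
  |\pmb\tau|_+^{[k]}
  =\max_{|I|=k}\frac{1}{k}\sum_{i\in I}\tau_i
  =\max_{|J|=d-k}\frac{1}{k}\sum_{j\in J}(-\tau_j)
  =\frac{d-k}{k}\max_{|J|=d-k}\frac{1}{d-k}\sum_{j\in J}(-\tau_j).
\]
The last maximum is exactly $|-\pmb\tau|_+^{[d-k]}$ by definition, which gives the claimed identity.

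There is really no obstacle here; the argument is pure bookkeeping with the constraint $\sum\tau_i=0$ and the $I\leftrightarrow I^c$ duality between $k$- and $(d-k)$-subsets. The only thing to watch for is the denominator rescaling from $\tfrac{1}{k}$ to $\tfrac{1}{d-k}$, which produces the factor $\tfrac{d-k}{k}$ on the right-hand side.
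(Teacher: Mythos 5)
Your proof is correct and takes essentially the same approach as the paper's: both exploit $\tau_1+\ldots+\tau_d=0$ to replace the sum over an index set by the sum of $-\tau_j$ over the complementary set, and then rescale the denominator from $\tfrac1k$ to $\tfrac1{d-k}$ to produce the factor $\tfrac{d-k}{k}$. The only cosmetic difference is that the paper first sorts the coordinates so that the maximizing $k$-subset is the top $k$, whereas you carry the complement bijection through the maximum over all subsets; both versions are valid.
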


\begin{proof}
  Suppose $\pmb\tau=(\tau_1,\ldots,\tau_d)$, $\tau_1\geq\ldots\geq\tau_d$. Then, since $\tau_1+\ldots+\tau_d=0$,
  \[
    |\pmb\tau|_+^{[k]}=
    \frac1k\sum_{j=1}^k\tau_j=
    \frac{d-k}{k}\cdot\frac{1}{d-k}\sum_{j=k+1}^d(-\tau_j)=
    \frac{d-k}{k}\cdot|-\pmb\tau|_+^{[d-k]}.
  \]
\end{proof}

\begin{corollary}\label{cor:k-th_functional_vs_(d-k)-th}
  Let $f_k(\pmb\tau)=|\pmb\tau|_+^{[k]}$, $k=1,\ldots,d-1$. Then, for every such $k$, we have
  \[
    \frac{\bPsi_k(\La,f_k)}{k}=
    \frac{\bPsi_{d-k}(\La^\ast,f_{d-k})}{d-k}.
  \]
\end{corollary}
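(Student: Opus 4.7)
The plan is to combine statement (ii) of Proposition \ref{prop:essence_of_transference_split_up} with Lemma \ref{l:k-th_functional_vs_(d-k)-th}, exploiting the bijection $\pmb\tau\mapsto-\pmb\tau$ of $\cT$ in the liminf defining $\bPsi_k$.

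First, I would expand the definition
\[
  \bPsi_k(\La, f_k) = \liminf_{|\pmb\tau|\to\infty} \frac{S_k(\La,\pmb\tau)}{f_k(\pmb\tau)}.
\]
By Proposition \ref{prop:essence_of_transference_split_up} (ii), the numerator equals $S_{d-k}(\La^\ast,-\pmb\tau) + O(1)$, and by Lemma \ref{l:k-th_functional_vs_(d-k)-th} the denominator equals $\tfrac{d-k}{k}\, f_{d-k}(-\pmb\tau)$. Since $1 \leq d-k \leq d-1$, the functional $f_{d-k} = |\cdot|_+^{[d-k]}$ generates an exhaustion of $\cT$ (as recorded in the paper before Proposition \ref{prop:Omega_vs_Psi}), so $f_{d-k}(-\pmb\tau)\to\infty$ as $|\pmb\tau|\to\infty$, and the additive $O(1)$ is absorbed in the limit.

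Next, since $\pmb\tau\mapsto-\pmb\tau$ is a norm-preserving bijection of $\cT$, substituting $\pmb\tau'=-\pmb\tau$ converts the liminf as $|\pmb\tau|\to\infty$ into the liminf as $|\pmb\tau'|\to\infty$, giving
\[
  \bPsi_k(\La, f_k) = \frac{k}{d-k}\liminf_{|\pmb\tau'|\to\infty}\frac{S_{d-k}(\La^\ast,\pmb\tau')}{f_{d-k}(\pmb\tau')} = \frac{k}{d-k}\,\bPsi_{d-k}(\La^\ast, f_{d-k}).
\]
Dividing by $k$ yields the claimed equality. The argument is essentially clean bookkeeping; the only point that requires a word of comment is the vanishing of the $O(1)$ error under the normalization, which is precisely why one needs $f_{d-k}$ to be an exhausting functional — and this is exactly the reason $k$ is restricted to $\{1,\dots,d-1\}$ rather than extending to $k=d$.
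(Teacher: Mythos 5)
Your argument is correct and is essentially the paper's proof: the paper cites statement (ii) of Proposition \ref{prop:essence_of_transference_split_up_lattice_exp} (which is exactly Proposition \ref{prop:essence_of_transference_split_up}(ii) after dividing by $f$ and passing to the limit) together with Lemma \ref{l:k-th_functional_vs_(d-k)-th}. You merely carry out that limit passage by hand, including the correct observations that the $O(1)$ term is killed because $f_{d-k}$ exhausts $\cT$ and that $\pmb\tau\mapsto-\pmb\tau$ preserves the limiting process.
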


\begin{proof}
  It suffices to apply statement \textup{(ii)} of Proposition \ref{prop:essence_of_transference_split_up_lattice_exp} and Lemma \ref{l:k-th_functional_vs_(d-k)-th}.
\end{proof}

\begin{lemma} \label{l:k-th_functional_vs_(k+1)-th}
  For each $\pmb\tau\in\cT$ and each $k=1,\ldots,d-1$ we have
  \[
    \frac{k(d-k-1)}{(d-k)(k+1)}|\pmb\tau|_+^{[k]}
    \leq
    |\pmb\tau|_+^{[k+1]}
    \leq
    |\pmb\tau|_+^{[k]}.
  \]
\end{lemma}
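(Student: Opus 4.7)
Without loss of generality reorder the coordinates so that $\tau_1\geq\tau_2\geq\ldots\geq\tau_d$; then by definition
\[
  |\pmb\tau|_+^{[k]}=\frac{\tau_1+\ldots+\tau_k}{k},
\]
since the maximum over $k$-element subsets is achieved on the top $k$ coordinates. The plan is to handle the two inequalities separately: the right-hand one directly, and the left-hand one by invoking the right-hand one applied to $-\pmb\tau$ together with Lemma \ref{l:k-th_functional_vs_(d-k)-th}.

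For the upper bound, I need $\tfrac{1}{k+1}\sum_{j=1}^{k+1}\tau_j\leq \tfrac{1}{k}\sum_{j=1}^{k}\tau_j$. Cross-multiplying by $k(k+1)$, this reduces to $k\tau_{k+1}\leq\tau_1+\ldots+\tau_k$, which is immediate since $\tau_{k+1}\leq\tau_j$ for every $j\leq k$.

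For the lower bound, I will apply Lemma \ref{l:k-th_functional_vs_(d-k)-th} twice:
\[
  |\pmb\tau|_+^{[k]}=\frac{d-k}{k}\,|-\pmb\tau|_+^{[d-k]},
  \qquad
  |\pmb\tau|_+^{[k+1]}=\frac{d-k-1}{k+1}\,|-\pmb\tau|_+^{[d-k-1]}.
\]
Then the already-proved upper bound, applied to $-\pmb\tau$ with index $d-k-1$ (so that the ``next'' index is $d-k$), gives $|-\pmb\tau|_+^{[d-k]}\leq|-\pmb\tau|_+^{[d-k-1]}$, and substituting yields
\[
  |\pmb\tau|_+^{[k+1]}\geq\frac{d-k-1}{k+1}\,|-\pmb\tau|_+^{[d-k]}=\frac{d-k-1}{k+1}\cdot\frac{k}{d-k}\,|\pmb\tau|_+^{[k]}=\frac{k(d-k-1)}{(d-k)(k+1)}\,|\pmb\tau|_+^{[k]}.
\]
No real obstacle arises; the only point to be careful about is the edge case $k=d-1$, where the lower bound factor becomes $0$ and the statement is vacuous (consistent with $|\pmb\tau|_+^{[d]}=0$), so the argument remains valid throughout the allowed range of $k$.
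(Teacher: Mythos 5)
Your proof is correct, but it takes a partially different route from the paper. The paper proves both inequalities in a single direct computation: after sorting $\tau_1\geq\ldots\geq\tau_d$ it uses the zero-sum condition to sandwich $\tau_{k+1}$ between $\frac1k\sum_{j=1}^k\tau_j$ and $\frac{-1}{d-k}\sum_{j=1}^k\tau_j$, which immediately yields both bounds for every $k=1,\ldots,d-1$ at once. You prove only the monotonicity $|\pmb\tau|_+^{[k+1]}\leq|\pmb\tau|_+^{[k]}$ directly (your argument is the same top-$k$ averaging observation, and in fact does not even need $\sum\tau_i=0$), and then obtain the harder lower bound by bootstrapping through the duality identity of Lemma \ref{l:k-th_functional_vs_(d-k)-th} applied to $\pmb\tau$ and $-\pmb\tau$. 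This is a clean trade: the zero-sum hypothesis enters your argument only through Lemma \ref{l:k-th_functional_vs_(d-k)-th}, and the constant $\frac{k(d-k-1)}{(d-k)(k+1)}$ emerges automatically rather than from an explicit estimate; the price is that your reduction only makes sense for $k\leq d-2$, so the case $k=d-1$ must be checked separately (as you do: there the left-hand side is $0$ and $|\pmb\tau|_+^{[d]}=0$, so the inequality holds trivially rather than being literally vacuous), whereas the paper's computation treats all $k$ uniformly. Both arguments are complete and elementary.
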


\begin{proof}
  Suppose $\pmb\tau=(\tau_1,\ldots,\tau_d)$, $\tau_1\geq\ldots\geq\tau_d$. Then, since $\tau_1+\ldots+\tau_d=0$,
  \[
    \frac1k\sum_{j=1}^k\tau_j
    \geq
    \tau_{k+1}
    \geq
    \frac1{d-k}\sum_{j=k+1}^d\tau_j=
    \frac{-1}{d-k}\sum_{j=1}^k\tau_j.
  \]
  Hence
  \[
    \frac{d-k-1}{d-k}\sum_{j=1}^k\tau_j\leq
    \sum_{j=1}^{k+1}\tau_j\leq
    \frac{k+1}{k}\sum_{j=1}^k\tau_j,
  \]
  and the desired inequality follows.
\end{proof}

\begin{corollary}\label{cor:k-th_functional_vs_(k+1)-th}
  Let $f_k(\pmb\tau)=|\pmb\tau|_+^{[k]}$, $k=1,\ldots,d-1$. Then, for every $k\in\{1,\ldots,d-2\}$, we have
  \[
    \frac{d-k}{k}\cdot
    \frac{\bPsi_k(\La,f_k)}{k}
    \leq
    \frac{d-k-1}{k+1}\cdot
    \frac{\bPsi_{k+1}(\La,f_{k+1})}{k+1}
  \]
\end{corollary}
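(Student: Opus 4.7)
The plan is to chain two inequalities: one comparing $\bPsi_k$ and $\bPsi_{k+1}$ under a common normalisation, and one converting the normalisation of $\bPsi_{k+1}$ from $f_k$ to $f_{k+1}$.

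For the first link, statement (iii) of Proposition \ref{prop:essence_of_transference_split_up} gives the pointwise inequality $S_k(\La,\pmb\tau)/k \leq S_{k+1}(\La,\pmb\tau)/(k+1)$. Dividing through by $f_k(\pmb\tau)>0$ and passing to $\liminf$ as $|\pmb\tau|\to\infty$ yields
\[
  \frac{\bPsi_k(\La,f_k)}{k} \leq \frac{\bPsi_{k+1}(\La,f_k)}{k+1}.
\]

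For the second link, the left-hand inequality of Lemma \ref{l:k-th_functional_vs_(k+1)-th} rewrites as $1/f_k(\pmb\tau) \geq \frac{k(d-k-1)}{(d-k)(k+1)}\cdot 1/f_{k+1}(\pmb\tau)$. The decisive sign observation is that $S_{k+1}(\La,\pmb\tau)\leq 0$ pointwise: this follows by combining $S_d\leq 0$ (part (i) of Proposition \ref{prop:properties_of_S_k}, itself Minkowski's second theorem) with the pointwise chain $S_{k+1}/(k+1)\leq\ldots\leq S_d/d$ of Proposition \ref{prop:essence_of_transference_split_up}(iii). Multiplying the functional inequality by the non-positive quantity $S_{k+1}(\La,\pmb\tau)$ therefore reverses it, and taking $\liminf$ produces
\[
  \bPsi_{k+1}(\La,f_k) \leq \frac{k(d-k-1)}{(d-k)(k+1)}\,\bPsi_{k+1}(\La,f_{k+1}).
\]

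Substituting the second estimate into the first (both sides being non-positive) and multiplying through by the positive factor $(d-k)/k$, the powers of $k$, $k+1$, $d-k$ and $d-k-1$ combine cleanly to yield exactly the inequality claimed in the corollary. The only delicate point, and the one I would spell out carefully, is the sign reversal in the second step; without the global pointwise bound $S_{k+1}\leq 0$ the liminf argument would go the wrong way. No further obstacle is anticipated: beyond this sign bookkeeping the argument is routine algebra.
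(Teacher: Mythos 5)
Your proof is correct and follows essentially the same route as the paper's: the paper's one-line proof invokes the chain $\frac{\bPsi_k}{k}\leq\frac{\bPsi_{k+1}}{k+1}$ for a fixed functional, Lemma \ref{l:k-th_functional_vs_(k+1)-th}, and the nonpositivity of $S_{k+1}$, which is exactly the decomposition you carry out, with the sign-reversal step made explicit.
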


\begin{proof}
  It suffices to apply statement \textup{(iii)} of Proposition \ref{prop:essence_of_transference_split_up_lattice_exp}, Lemma \ref{l:k-th_functional_vs_(k+1)-th}, and the fact that the Schmidt--Summerer exponents of the second type, as well as the functions $S_k$, are nonpositive (see statement \textup{(i)} of Proposition \ref{prop:properties_of_S_k}).
\end{proof}

Applying Corollaries \ref{cor:k-th_functional_vs_(d-k)-th}, \ref{cor:k-th_functional_vs_(k+1)-th}, Proposition \ref{prop:Omega_vs_Psi}, and the fact that for nonzero $x,y\in\R$ the relation $(1+x)(1+y)=1$ is equivalent to $x^{-1}+y^{-1}+1=0$, we get the following splitting of Theorem \ref{t:lattice_transference}.

\begin{theorem}
  If any of the exponents $\Omega_1(\La),\ldots,\Omega_{d-1}(\La),\Omega_1(\La^\ast),\ldots,\Omega_{d-1}(\La^\ast)$ is zero, then so are all the others. If they are nonzero, then
  \[
    \dfrac{1+\Omega_1(\La)^{-1}}{d-1}
    \leq\ldots\leq
    \dfrac{k}{d-k}
    \big(1+\Omega_k(\La)^{-1}\big)
    \leq\ldots\leq
    (d-1)
    \big(1+\Omega_{d-1}(\La)^{-1}\big)
  \]
  and
  $
    \Omega_{d-1}(\La)=
    \Omega_1(\La^\ast).
  $
\end{theorem}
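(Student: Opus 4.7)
The plan is to translate everything into Schmidt--Summerer exponents of the second type via Proposition \ref{prop:Omega_vs_Psi}. Writing the relation $(1+\Omega_k(\La))(1+\bPsi_k(\La,f_k)/k) = 1$ in the equivalent form $\Omega_k(\La)^{-1} + (\bPsi_k(\La,f_k)/k)^{-1} + 1 = 0$ yields, in the nonzero case, the dictionary
\[
  1+\Omega_k(\La)^{-1} = -\frac{k}{\bPsi_k(\La,f_k)},
\]
and analogously with $\La^\ast$ in place of $\La$. Since the $\bPsi_k$'s are nonpositive, the right-hand side is nonnegative. Under this dictionary, $\frac{k}{d-k}(1+\Omega_k(\La)^{-1})$ becomes $-\frac{k^2}{(d-k)\bPsi_k(\La,f_k)}$, and inequalities between such positive expressions are equivalent, via taking reciprocals and multiplying by $-1$, to the reversed inequalities between the nonpositive quantities $\frac{d-k}{k}\cdot\frac{\bPsi_k(\La,f_k)}{k}$.

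First, I would establish the main chain. Each consecutive step $\frac{k}{d-k}(1+\Omega_k(\La)^{-1}) \leq \frac{k+1}{d-k-1}(1+\Omega_{k+1}(\La)^{-1})$ becomes, after applying the dictionary, the inequality
\[
  \frac{d-k}{k}\cdot\frac{\bPsi_k(\La,f_k)}{k} \leq \frac{d-k-1}{k+1}\cdot\frac{\bPsi_{k+1}(\La,f_{k+1})}{k+1},
\]
which is exactly Corollary \ref{cor:k-th_functional_vs_(k+1)-th}. Iterating this for $k=1,\ldots,d-2$ produces the full chain. For the equality $\Omega_{d-1}(\La) = \Omega_1(\La^\ast)$, I would apply Corollary \ref{cor:k-th_functional_vs_(d-k)-th} at $k=d-1$, which reads $\bPsi_{d-1}(\La,f_{d-1})/(d-1) = \bPsi_1(\La^\ast,f_1)$; the dictionary, applied once on each side, immediately converts this into $1+\Omega_{d-1}(\La)^{-1} = 1+\Omega_1(\La^\ast)^{-1}$, hence the claimed equality.

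The main obstacle is the zero-case dichotomy: one must check that if any single one among $\Omega_1(\La),\ldots,\Omega_{d-1}(\La),\Omega_1(\La^\ast),\ldots,\Omega_{d-1}(\La^\ast)$ vanishes, then all $2(d-1)$ of them do. I would argue as follows. By Proposition \ref{prop:Omega_vs_Psi}, $\Omega_k(\La)=0$ is equivalent to $\bPsi_k(\La,f_k)=0$. Corollary \ref{cor:k-th_functional_vs_(k+1)-th} shows that the sequence $a_k := \frac{d-k}{k}\cdot\frac{\bPsi_k(\La,f_k)}{k}$ is nondecreasing in $k$ and everywhere $\leq 0$, so a zero at some $k_0$ forces $a_k=0$ for all $k\geq k_0$, giving zeros of $\Omega_k(\La)$ for $k\in\{k_0,\ldots,d-1\}$. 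To propagate zeros to smaller indices, I would invoke Corollary \ref{cor:k-th_functional_vs_(d-k)-th}, which transfers a zero of $\bPsi_k(\La,f_k)$ to a zero of $\bPsi_{d-k}(\La^\ast,f_{d-k})$; the analogous forward chain on the dual side then kills $\bPsi_j(\La^\ast,f_j)$ for all $j\geq d-k_0$, which via Corollary \ref{cor:k-th_functional_vs_(d-k)-th} once more kills $\bPsi_i(\La,f_i)$ for all $i\leq k_0$. Assembling these propagations (and performing the symmetric argument starting from a zero on the dual side) yields the full dichotomy.
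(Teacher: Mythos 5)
Your proposal is correct and takes essentially the same route as the paper: the theorem is obtained there by exactly the combination you use, namely Corollaries \ref{cor:k-th_functional_vs_(d-k)-th} and \ref{cor:k-th_functional_vs_(k+1)-th}, Proposition \ref{prop:Omega_vs_Psi}, and the equivalence of $(1+x)(1+y)=1$ with $x^{-1}+y^{-1}+1=0$ for nonzero $x,y$. Your explicit propagation of the zero case through the monotone chain and the duality equality correctly fills in a step the paper leaves implicit.
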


\subsection{Transference inequalities for intermediate Diophantine exponents of lattices}\label{sec:pseudo_unit_product}

We conclude with another transference result, one concerning the exponents $\bpsi_k(\La,f)$, $\apsi_k(\La,f)$, and thus, the exponents $\omega_k(\La)$, $\hat\omega_k(\La)$. Combining statement \textup{(i)} of Proposition \ref{prop:essence_of_transference_split_up_lattice_exp} with Lemma \ref{l:f_ast_vs_f} immediately produces the following statement.

\begin{theorem}\label{t:small_psi_transference}
  Let $f(\pmb\tau)=|\pmb\tau|_+$. Then, for each $k\in\{1,\ldots,d\}$, the exponents $\bpsi_k(\La,f)$, $\apsi_{d+1-k}(\La^\ast,f)$ have different signs, or are simultaneously zero. Besides that,
  \begin{equation*}
  \begin{array}{l}
    |\bpsi_k(\La,f)|
    \leq
    (d-1)|\apsi_{d+1-k}(\La^\ast,f)|, \\
    |\apsi_k(\La,f)|
    \leq
    (d-1)|\bpsi_{d+1-k}(\La^\ast,f)|.
    \vphantom{1^{\big|}}
  \end{array}
  \end{equation*}
\end{theorem}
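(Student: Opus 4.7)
The plan is to chain together the two ingredients mentioned in the preamble to the statement. First, I would invoke statement \textup{(i)} of Proposition \ref{prop:essence_of_transference_split_up_lattice_exp} with $\La$ and the functional $f$: this yields, regardless of the particular choice of $f$, the identities
\[
  \bpsi_k(\La,f) = -\apsi_{d+1-k}(\La^\ast,f^\ast),
  \qquad
  \apsi_k(\La,f) = -\bpsi_{d+1-k}(\La^\ast,f^\ast).
\]
In particular, the left-hand side of each identity is the negative of the corresponding right-hand side, so $\bpsi_k(\La,f)$ and $\apsi_{d+1-k}(\La^\ast,f^\ast)$ automatically have opposite signs, or are simultaneously zero, and likewise for the second pair.

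Next I would apply Lemma \ref{l:f_ast_vs_f} to the dual lattice $\La^\ast$: it says that the sign of $\apsi_{d+1-k}(\La^\ast,f^\ast)$ coincides with that of $\apsi_{d+1-k}(\La^\ast,f)$, and that
\[
  |\apsi_{d+1-k}(\La^\ast,f^\ast)| \leq (d-1)\,|\apsi_{d+1-k}(\La^\ast,f)|,
\]
with the analogous statement for $\bpsi$. Substituting these sign and size comparisons into the two identities from the previous step gives at once that $\bpsi_k(\La,f)$ and $\apsi_{d+1-k}(\La^\ast,f)$ have opposite signs (or vanish together), and the inequality $|\bpsi_k(\La,f)| \leq (d-1)|\apsi_{d+1-k}(\La^\ast,f)|$; the companion bound with $\apsi_k$ and $\bpsi_{d+1-k}$ is obtained in exactly the same way from the second identity.

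I do not foresee a real obstacle here: the theorem is a mechanical two-step synthesis, with Proposition \ref{prop:essence_of_transference_split_up_lattice_exp}\,\textup{(i)} providing the transposition to the dual lattice (via the involution $\pmb\tau \mapsto -\pmb\tau$, i.e.\ via $f \mapsto f^\ast$) and Lemma \ref{l:f_ast_vs_f} absorbing the discrepancy between $f$ and $f^\ast$ at the cost of the factor $d-1$ (coming ultimately from the geometric inequality \eqref{eq:tau_minus_vs_tau_plus}). The only minor point worth stating explicitly is that the reversal of lower/upper bounds ($\bpsi \leftrightarrow \apsi$) in the statement is forced by the minus sign appearing in Proposition \ref{prop:essence_of_transference_split_up_lattice_exp}\,\textup{(i)}.
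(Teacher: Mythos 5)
Your argument is exactly the paper's: the theorem is stated there as an immediate combination of statement (i) of Proposition \ref{prop:essence_of_transference_split_up_lattice_exp} with Lemma \ref{l:f_ast_vs_f}, which is precisely your two-step chain (dualize via $f\mapsto f^\ast$, then absorb the $f$ versus $f^\ast$ discrepancy at the cost of the factor $d-1$). The proposal is correct and needs no changes.
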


Applying again Proposition \ref{prop:omega_vs_psi} and the fact that for nonzero $x,y\in\R$ the relation $(1+x)(1+y)=1$ is equivalent to $x^{-1}+y^{-1}+1=0$, we get the following reformulation of Theorem \ref{t:small_psi_transference}.

\begin{theorem}\label{t:omega_times_omega_lattice_exp}
  Let $f(\pmb\tau)=|\pmb\tau|_+$, $k\in\{1,\ldots,d\}$.

  The exponents $\omega_k(\La)$, $\hat\omega_{d+1-k}(\La^\ast)$ have different signs, or are simultaneously zero.

  If they are nonzero, let $A$ be the positive one of them, and let $B$ be the negative one. Then
  \begin{equation}\label{eq:omega_times_omega_lattice_exp}
    \frac{A^{-1}+1}{(d-1)}
    \leq
    -(B^{-1}+1)
    \leq
    (d-1)(A^{-1}+1).
  \end{equation}
\end{theorem}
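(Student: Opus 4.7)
The theorem is stated above as a reformulation of Theorem \ref{t:small_psi_transference}, so my plan is a purely algebraic translation using Proposition \ref{prop:omega_vs_psi} together with the elementary equivalence $(1+x)(1+y)=1 \iff x^{-1}+y^{-1}+1=0$.

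First, I would set $\psi = \bpsi_k(\La,f)$ and $\tilde\psi = \apsi_{d+1-k}(\La^\ast,f)$. Proposition \ref{prop:omega_vs_psi} provides $(1+\omega_k(\La))(1+\psi) = 1$ and $(1+\hat\omega_{d+1-k}(\La^\ast))(1+\tilde\psi) = 1$, and since $\omega_k(\La), \hat\omega_{d+1-k}(\La^\ast) \geq -1+1/d > -1$, the leading factors are positive; this forces $\psi, \tilde\psi > -1$ and makes $\omega_k(\La)$ (resp.\ $\hat\omega_{d+1-k}(\La^\ast)$) have the opposite sign to $\psi$ (resp.\ $\tilde\psi$), or vanish simultaneously with it. A one-line manipulation of these identities yields
\[
  \omega_k(\La)^{-1}+1 = -\frac{1}{\psi}, \qquad \hat\omega_{d+1-k}(\La^\ast)^{-1}+1 = -\frac{1}{\tilde\psi},
\]
whenever the quantities involved are nonzero.

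Next, I would invoke Theorem \ref{t:small_psi_transference}. It tells me that $\psi$ and $\tilde\psi$ have opposite signs or are both zero, which together with the sign correspondence established above immediately gives the first assertion about $\omega_k(\La)$ and $\hat\omega_{d+1-k}(\La^\ast)$. For the quantitative inequalities, Theorem \ref{t:small_psi_transference} applied to $(\La,k)$ directly gives $|\psi| \leq (d-1)|\tilde\psi|$, while applying it with $(\La^\ast, d+1-k)$ in place of $(\La,k)$ (and using $(\La^\ast)^\ast = \La$) supplies the reverse bound $|\tilde\psi| \leq (d-1)|\psi|$.

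Finally, I translate to the desired form. Writing $\psi_A$ for whichever of $\psi,\tilde\psi$ is negative and $\psi_B$ for the positive one, the identities above give $A^{-1}+1 = 1/|\psi_A|$ and $-(B^{-1}+1) = 1/|\psi_B|$; inverting the combined bound $|\psi_A|/(d-1) \leq |\psi_B| \leq (d-1)|\psi_A|$ yields exactly $(A^{-1}+1)/(d-1) \leq -(B^{-1}+1) \leq (d-1)(A^{-1}+1)$. The only obstacle is bookkeeping: keeping track of which of $\omega_k(\La)$, $\hat\omega_{d+1-k}(\La^\ast)$ plays the role of $A$ in each of the two sign cases, but nothing deeper is required because Theorem \ref{t:small_psi_transference} has already done the heavy lifting.
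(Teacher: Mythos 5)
Your proposal is correct and follows essentially the same route as the paper, which presents this theorem as a direct reformulation of Theorem \ref{t:small_psi_transference} via Proposition \ref{prop:omega_vs_psi} and the identity $(1+x)(1+y)=1\iff x^{-1}+y^{-1}+1=0$; your explicit step of applying Theorem \ref{t:small_psi_transference} a second time to $(\La^\ast,d+1-k)$ with $(\La^\ast)^\ast=\La$ is exactly the bookkeeping the paper leaves implicit. The only (harmless) loose end is the case $\bpsi_k(\La,f)=-1$ or $\apsi_{d+1-k}(\La^\ast,f)=-1$, where the corresponding exponent is $+\infty$ and one should read $A^{-1}=0$; your argument still goes through under that convention.
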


\begin{remark}
  Theorem \ref{t:omega_times_omega_lattice_exp} is an analogue of Theorem \ref{t:omega_times_omega}. However, one can easily notice that \eqref{eq:omega_times_omega_lattice_exp} is an inequality, whereas \eqref{eq:omega_times_omega} is an equality. The reason for this difference is \eqref{eq:f_ast_vs_f_reason_for_inequality}, implied by the choice of $f$. The latter in its turn is implied by the choice of the supremum norm to define $\omega(\La)$. If $f$ were symmetric, then $f=f^\ast$ would hold instead of \eqref{eq:f_ast_vs_f_reason_for_inequality}, and the factor $(d-1)$ would vanish on both sides of \eqref{eq:omega_times_omega_lattice_exp}, turning \eqref{eq:omega_times_omega_lattice_exp} into
  \[
    A^{-1}+B^{-1}=-2.
  \]
  We must admit though, that no such choice of $f$ seems as natural from the point of view of the definition of lattice exponents, as $f=|\cdot|_+$.
\end{remark}

\paragraph{Acknowledgements.}

The author is a winner of the ``Junior Leader'' contest conducted by Theoretical Physics and Mathematics Advancement Foundation “BASIS” and would like to thank its sponsors and jury.

\end{document}